\setlist[itemize]{leftmargin=*}
\newcommand{\N}{\mathbb{N}}
\newcommand{\Z}{\mathbb{Z}}
\newcommand{\R}{\mathbb{R}}
\newcommand{\C}{\mathbb{C}}
\newcommand\restr[2]{{
  \left.\kern-\nulldelimiterspace
  #1
  \vphantom{\big|}
  \right|_{#2}
  }}
\newcommand{\de}{\partial}
\newcommand{\mz}{\frac{1}{2}}
\newcommand{\ang}[1]{\left\langle#1\right\rangle}
\newcommand{\uno}{\bm{1}}
\newcommand{\nin}{\not\in}
\newcommand{\weakto}{\rightharpoonup}
\newcommand{\mrestr}{\mathbin{\vrule height 1.6ex depth 0pt width
0.13ex\vrule height 0.13ex depth 0pt width 1.3ex}}
\renewcommand{\bar}{\overline}
\DeclareMathOperator{\sgn}{sgn}
\DeclareMathOperator{\diam}{diam}
\theoremstyle{definition}
\newtheorem{definition}{Definition}
\newtheorem{rmk}[definition]{Remark}
\newtheorem*{definition*}{Definition}
\newtheorem*{notazen*}{Notation}
\newtheorem*{rmk*}{Remark}
\newtheorem*{example*}{Example}
\newtheorem*{ack*}{Acknowledgement}
\newtheorem*{acks*}{Acknowledgements}
\newtheorem{thm}[definition]{Theorem}
\newtheorem{lemmaen}[definition]{Lemma}
\newtheorem{corollary}[definition]{Corollary}
\newtheorem{proposition}[definition]{Proposition}
\newtheorem*{thm*}{Theorem}
\newtheorem*{lemmaen*}{Lemma}
\newtheorem*{corollary*}{Corollary}
\newtheorem*{proposition*}{Proposition}
\newtheorem*{claim*}{Claim}
\newtheorem*{conj*}{Conjecture}
\DeclareMathOperator{\dive}{div}
\DeclareMathOperator{\spt}{spt}
\DeclareMathOperator{\dis}{Dis}
\DeclareMathOperator{\exc}{Exc}
\DeclareMathOperator{\mass}{\mathbb{M}}
\let\flat\undefined
\DeclareMathOperator{\flat}{\mathcal{F}}
\newcommand{\vol}{\mathrm{vol}}
\renewcommand{\ang}[1]{\langle #1\rangle}
\newcommand{\area}{\mathcal{H}^m(M)}
\newcommand{\fnorm}{\mathcal{F}}
\newcommand{\bp}{\mathbb{B}}
\newcommand{\tmz}{{\textstyle\frac{1}{2}}}
\renewcommand{\tfrac}[2]{{\textstyle\frac{#1}{#2}}}
\renewcommand{\epsilon}{\varepsilon}
\DeclareMathOperator{\Gr}{Gr}
\begin{document}
	\title[Minimal submanifolds as energy concentration sets]{non-degenerate minimal submanifolds \\ as energy concentration sets: \\ a variational approach}
	\author[G. De Philippis]{Guido De Philippis}
	\address{Courant Institute of Mathematical Sciences, New York University, 251 Mercer Street, New York, NY 10003, United States of America.}
	\email{guido@cims.nyu.edu}
	\author[A. Pigati]{Alessandro Pigati}
	\address{Courant Institute of Mathematical Sciences, New York University, 251 Mercer Street, New York, NY 10003, United States of America.}
	\email{ap6968@nyu.edu}
	
	
	
	\begin{abstract}
	We prove that every non-degenerate minimal submanifold of codimension two can be obtained as the energy concentration set of a family of critical maps for the (rescaled) Ginzburg--Landau functional. The proof is purely variational, and  follows the strategy laid out by Jerrard and Sternberg in \cite{JSte}, extending  a recent result for geodesics by Colinet--Jerrard--Sternberg \cite{CJS}. The same proof applies also to the $U(1)$-Yang--Mills--Higgs  and to the Allen--Cahn--Hilliard energies. 		
%

%
%
	\end{abstract}
	
	\maketitle
	\tableofcontents
	
	\section{Introduction}
	\subsection{General overview and statement of the problem}
	Minimal submanifolds are central objects  in geometric analysis and the calculus of variations.
	Their construction with variational methods is one of the main motivations for the entire field of geometric measure theory, and stimulated important developments
	in other parts of analysis and mathematics, particularly in the study of elliptic partial differential equations.
	
	While minimal hypersurfaces in $\R^n$ minimize the area locally, this needs not be true globally.
	The problem of finding submanifolds minimizing the area, subject to appropriate constraints, led to the successful theory of currents by Federer and Fleming,
	with important subsequent contributions concerning especially the regularity theory.
	On the other hand, we can hope to exhibit many more minimal submanifolds if we also look at the unstable ones, which can be viewed as saddle-type critical points for the area functional.
	
	Following the general scheme of min-max problems, initially proposed by Birkhoff for the construction of closed geodesic, Almgren and Pitts
	created a technical framework (the Almgren--Pitts theory) within geometric measure theory, capable of producing such (possibly) unstable minimal submanifolds
	in any given closed Riemannian manifold $N$, with a good regularity theory in the hypersurface case \cite{Pitts}.
	
	Minimal submanifolds also arise as concentration sets of various physically relevant energies, and the first rigorous mathematical formalization of this fact dates back to  an idea of De Giorgi: one can approximate the area of a hypersurface by means of ($\epsilon$-rescalings of) the Allen--Cahn--Hilliard  energy (see  \cref{AC}  below),
	defined on functions $u:N\to\R$ on a closed ambient $N$. Roughly speaking, as $\epsilon\to 0$, maps with bounded energy tend to take the values $1$ and $-1$, with a thin interface of width $\epsilon$ in between. This interface is expected to converge to a hypersurface whose area is the limit of the energy $E_\epsilon$.
	
	This heuristic idea was made into a deep principle, starting with the work of Modica and Mortola \cite{MM}, which showed that the Allen--Cahn energy $\Gamma$-converges to the $(n-1)$-area in an appropriate sense (see \cref{gamma.conv} below).
	Later, Hutchinson and Tonegawa \cite{HT} showed that the energy of critical maps always concentrates towards minimal hypersurfaces (rather, a weak version of them, given by stationary varifolds).
	Recently, Guaraco used this result to devise a new way to implement min-max schemes \cite{Guaraco}.
	
	In codimension two, similar (but weaker) results have been established by Lin--Rivi\`ere \cite{LR} and Bethuel--Brezis--Orlandi \cite{BBO} in the context of critical maps
	for the Ginzburg--Landau energy \cref{GL}.
	Among others, Jerrard--Soner \cite{JSoner} and Alberti--Baldo--Orlandi \cite{ABO} established the $\Gamma$-convergence theory, generalizing earlier results on the plane.
	Cheng \cite{Cheng} and Stern \cite{Stern} independently studied the asymptotic behavior of these energies on closed manifolds, and the latter work applied min-max constructions in order to produce minimal submanifolds (more precisely, stationary varifolds) of codimension two.
	
	Finally, the second-named author and Stern showed that in codimension two, minimal submanifolds can also  be approximated by the  Yang--Mills--Higgs energy on $U(1)$-bundles \cref{YMH}. More specifically, they showed the convergence of the energy density of critical points  to a stationary varifold \cite{PS} and, together with Parise, that the energy $\Gamma$-converges to the $(n-2)$-area \cite{PPS1}.
	The results obtained in \cite{PS} have a striking similarity with the ones of \cite{HT} and suggest that Yang--Mills--Higgs is a better way to approximate the area in codimension two, primarily thanks to a better decay of the energy away from vortices (we refer the reader to the introduction of \cite{PS} for additional discussion of this point).
	
This analysis can be extended also to the gradient flows of these energies, showing that they converge in a suitable sense to  the mean curvature flow \cite{Ilmanen, BOS, PPS2}.

The above results suggest that these energies are good ``diffuse'' approximations of the area functional, motivating the following natural question.

\begin{center}
\emph{Given a (smooth) minimal submanifold, can it be obtained as a (suitable)  limit of a sequence of  critical points of one  of the above energies? }
\end{center}

The answer is easily seen to be positive if the submanifold is an isolated  local minimizer of the area functional, via  standard  \(\Gamma\)-convergence arguments \cite{KS}.

For the  case of general critical points, the  answer was previously known only in few cases: in \cite{PR}   Pacard and Ritor\'e showed, via gluing techniques,  that  non-degenerate minimal hypersurfaces can be obtained as limits of critical points of the Allen--Cahn--Hilliard energy. In \cite{JSte} Jerrard and Sternberg settled a general variational approach to deal with similar questions via \(\Gamma\)-convergence techniques, and they showed that, in a ``mountain pass geometry'', there are critical points for the approximating functional with energy values close to the one of the limit critical point. Convergence of critical points cannot however been ensured in this general setting \cite[Remark~4.5]{JSte}.

In \cite{CJS},  Colinet, Jerrard and Sternberg  show that any non-degenerate geodesic in a three-manifold can be obtained as a limit of critical points of the Ginzburg--Landau energy.  The argument from \cite{CJS} combines the beautiful general framework developed in \cite{JSte} with the two following  observations, which are proved by carefully exploiting the one-dimensionality of the problem:

\begin{itemize}
\item we can identify a ``mountain pass geometry'' for the length functional around a non-degenerate geodesic, in the space of cycles with the flat norm;
\item non-degenerate geodesics are isolated among stationary varifolds (in a suitable sense).
\end{itemize}

The main result of this paper extends this approximation to every dimension  for all the energies described earlier  and provides a positive answer to the above question, completing the program started in \cite{JSte}. We loosely state it here, referring to \cref{thm.main} for a precise statement.

\begin{thm*}
Let \(N\) be a Riemannian manifold and \(M\subset N\) a non-degenerate minimal submanifold of codimension two (respectively, one). Then there exists a sequence of critical points of the Ginzburg--Landau or Yang--Mills--Higgs energies (respectively, of the Allen--Cahn--Hilliard energy) which ``converges'' to \(M\).
\end{thm*}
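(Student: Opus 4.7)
The plan is to implement, for every dimension and codimension, the two-step scheme introduced by Colinet--Jerrard--Sternberg \cite{CJS} for geodesics, relying on the abstract $\Gamma$-convergence/min-max framework of Jerrard--Sternberg \cite{JSte}. Let $F_\epsilon$ denote one of the three energies (Ginzburg--Landau, Yang--Mills--Higgs, or Allen--Cahn--Hilliard) and $k$ the corresponding codimension ($k=2$ in the first two cases, $k=1$ in the third), so that $F_\epsilon$ $\Gamma$-converges, up to a universal constant, to the $(n-k)$-area on the space of integral cycles equipped with the flat norm $\flat$. The abstract theorem of \cite{JSte} reduces the construction of a critical point of $F_\epsilon$ close to $M$ to two tasks: (i) producing a local mountain-pass (saddle) geometry for the area on cycles, centered at the integration current $[M]$; and (ii) establishing an isolation statement that forces the resulting critical maps to concentrate precisely on $M$ rather than on some other stationary varifold.

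For step (i), I would use the non-degeneracy assumption, which means that the Jacobi operator $J_M$ on sections of the normal bundle $NM$ has trivial kernel. Let $V_-\subset \Gamma(NM)$ denote its (finite-dimensional) negative eigenspace and set $d:=\dim V_-$ (the Morse index of $M$). For each $\phi\in\Gamma(NM)$ of small norm the normal graph $M_\phi$ is a smooth submanifold near $M$, with expansion
\begin{equation*}
\hau^{n-k}(M_\phi)=\hau^{n-k}(M)+\tmz\ang{J_M\phi,\phi}+o(\|\phi\|^2).
\end{equation*}
Restricting to a small closed ball in $V_-$ produces a $d$-parameter family of cycles with strict area maximum at $M$ and with strictly smaller area on the boundary sphere, while moving in the positive spectral subspace strictly increases the area. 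This saddle structure is exactly the local linking geometry required by \cite{JSte}, and it yields, for all $\epsilon$ small enough, a critical point $u_\epsilon$ of $F_\epsilon$ whose energy converges to (a constant times) $\hau^{n-k}(M)$ and whose associated energy-weighted current stays in an arbitrarily small $\flat$-neighborhood of $[M]$.

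The main difficulty, and the crucial new ingredient beyond \cite{CJS}, is step (ii). The asymptotic analyses of \cite{HT} (hypersurface case) and of \cite{Cheng,Stern,PS} (codimension-two case) imply that, along a subsequence, the rescaled energy measures of $u_\epsilon$ converge to a stationary integral $(n-k)$-varifold $V$ with $\mass(V)$ close to $\hau^{n-k}(M)$ and with $[V]-[M]$ arbitrarily small in the $\flat$-norm. The isolation statement one needs is: any such $V$ must coincide with $M$. In the geodesic case of \cite{CJS} this was proved by hand via ODE methods unavailable in higher dimensions; here I would combine an Allard-type $\epsilon$-regularity theorem (so that $V$ is forced to be a smooth normal graph $M_\psi$ over $M$ with $\|\psi\|_{C^1}$ controlled by the flat distance and the mass defect, using in codimension two the regularity machinery already developed in \cite{PS}) with the non-degeneracy of $J_M$, which rules out nontrivial minimal graphs over $M$ close to the zero section via a standard implicit function argument on the minimal surface equation linearized at $M$.

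Once the isolation step is in place, the mountain-pass critical points produced in step (i) cannot concentrate on anything other than $M$, so they converge to $M$ in the sense of \cref{thm.main}. The argument is uniform across the three energies, since the only inputs specific to each are the $\Gamma$-convergence result and the regularity/concentration theorem for critical points, both of which are already available in the cited references.
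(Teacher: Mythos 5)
Your overall blueprint---mountain-pass geometry from the negative eigenspace of the Jacobi operator, plus an isolation statement forcing the concentration set to equal $M$---matches the strategy of the paper, but both of your concrete steps have genuine gaps.

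\textbf{Step (i).} You derive the saddle structure only on smooth normal graphs $M_\phi$, and then assert that the abstract framework of \cite{JSte} ``yields \ldots a critical point $u_\epsilon$ whose \ldots energy-weighted current stays in an arbitrarily small $\flat$-neighborhood of $[M]$.'' This is not what \cite{JSte} provides: as the paper points out (citing \cite[Remark 4.5]{JSte}), the abstract framework only gives critical points with the correct energy \emph{value}, not flat-norm localization near $[M]$. The actual obstacle is that the mountain-pass inequality must be upgraded from smooth graphs to all integral $m$-cycles in a flat-norm neighborhood of $[M]$. The paper handles this through \cref{thm.core2}: strict local minimality of $[M]$ for the perturbed mass $\tilde\mass = \mass + \lambda|P(\cdot)|^2$ in a flat ball, proved by a compactness/contradiction argument combined with the regularity theory for almost-minimizing currents (Duzaar--Steffen). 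This upgrade is a nontrivial piece of work that your proposal passes over.

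\textbf{Step (ii).} Your proposal to prove the isolation statement via Allard's $\epsilon$-regularity plus an implicit-function argument works only when the relevant stationary varifold has density bounded below by $1$, so that Allard's theorem gives a $C^{1,\alpha}$ graph. That assumption holds in the Allen--Cahn and Yang--Mills--Higgs settings (where the limiting varifold is integral), but \emph{not} in the Ginzburg--Landau setting the way it is needed here. The argument must run along the gradient flow, and the Brakke-flow asymptotics of \cite{BOS} (\cref{asymp.flow}) only yield a density lower bound $\eta(t,N,\Lambda)>0$, with $\eta<1$ in general. Allard's theorem then does not apply, and the varifold may a priori be a genuine multigraph with non-integer multiplicities. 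The paper's \cref{thm.core.main} is designed exactly to circumvent this: it is formulated for rectifiable stationary varifolds with only a positive density lower bound, and its proof constructs a Lipschitz section out of the multigraph (using a Hausdorff-valued map $F$ and box-bracket averaging), controls its $W^{1,2}$ norm by an excess--displacement inequality, and extracts a nontrivial Jacobi field in the limit. This is the main technical novelty of the paper, and it is missing from your proposal.

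In short, your proposal is the correct skeleton but misattributes to \cite{JSte} a localization it does not provide, and the Allard-based isolation argument would break down for Ginzburg--Landau because of the sub-unit density bound on the parabolic limit.
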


The strategy of the proof is based on  \cite{CJS}, together with the new key observation that the previous facts can be obtained in full generality by exploiting ideas from the regularity theory for minimal surfaces. This step, which is our main contribution, is crucial to show that,  while these properties can be easily shown  in a smooth topology, they are  in fact true in a much weaker one; we refer to \cref{s.main} for a more detailed description of the strategy of the proof.

	\subsection{Three energies approximating the area}
		In this paper we assume familiarity with basic geometric measure theory, for which the reader may consult \cite{Simon}.

	Let $(N^n,g)$ be a closed, oriented Riemannian manifold. For a (smooth) map $u:N\to\R$, the Allen--Cahn energy is defined as
	\begin{align}\label{AC}
		&E_\epsilon(u)
		:=c\epsilon\int_N\Big(\frac{|du|^2}{2}+\frac{(1-u^2)^2}{4\epsilon^2}\Big),\quad c=\frac{3}{2\sqrt{2}},
	\end{align}
	where the normalization constant is $c(W)=1\,/\int_{-1}^1\sqrt{2W(t)}\,dt$
	for general double-well potentials $W$ (here we choose $W(u)=\frac{(1-u^2)^2}{4}$).
	
	The Ginzburg--Landau energy for maps $u:N\to\C$, in the version considered here, is	
	\begin{align}\label{GL}
		&E_\epsilon(u)
		:=\frac{1}{\pi\log(\epsilon^{-1})}\int_N\Big(\frac{|du|^2}{2}+\frac{(1-|u|^2)^2}{4\epsilon^2}\Big).
	\end{align}
	
	Finally, given a complex line bundle $L\to N$ endowed with a Hermitian metric, the Yang--Mills--Higgs energy is defined on couples $(u,\nabla)$,
	where $u:N\to L$ is a section and $\nabla$ is a metric connection on $L$. It is given by
	\begin{align}\label{YMH}
		&E_\epsilon(u,\nabla)
		:=\frac{1}{2\pi}\int_N\Big(|\nabla u|^2+\frac{(1-|u|^2)^2}{4\epsilon^2}+\epsilon^2|F_{\nabla}|^2\Big),
	\end{align}
	where $F_\nabla$ is the curvature of $\nabla$, taking values into $i\R$ (the Lie algebra of $U(1)$). In the sequel, as in \cite{PS}, we will often use the closed real-valued two-form
	\begin{align*}
		&\omega=\omega_\nabla:=iF_\nabla.
	\end{align*}
	When dealing with this energy, it will be convenient to fix a smooth reference (metric) connection $\nabla_0$; any other (metric) connection can be written as $\nabla=\nabla_0-i\alpha$,
	for a real-valued $\alpha\in\Omega^1(N)$, so that $\omega_\nabla=\omega_0+d\alpha$ (with $\omega_0:=\omega_{\nabla_0}$).
	
	For any of these energies, we denote by $\mu(u)$ (or $\mu(u,\nabla)$) the energy density, which is a Radon measure on $N$.
	For instance, for Ginzburg--Landau we let
	\begin{align*}
		&\mu(u)
		:=\frac{1}{\pi\log(\epsilon^{-1})}\Big(\frac{|du|^2}{2}+\frac{(1-|u|^2)^2}{4\epsilon^2}\Big)\,\mathcal{H}^n.
	\end{align*}
	
	In order to state the $\Gamma$-convergence results, we also introduce the following Jacobian quantities, which are initially defined as differential forms but are identified with their dual currents throughout the paper:
	namely, for such a Jacobian $J\in\Omega^k(N)$, we define the corresponding $(n-k)$-current (still called $J$) by the assignment
	\begin{align*}
		&\ang{J,\eta}:=\int_N J\wedge\eta,
	\end{align*}
	for all $\eta\in\Omega^{n-k}(N)$.
	
	When dealing with the Allen--Cahn energy \cref{AC} we define
	\begin{align*}
		&J(u)
		:=\frac{3}{4}(1-u^2)\,du
	\end{align*}
	(or $\sqrt{W(u)}\,du\,/\int_{-1}^1\sqrt{W(t)}\,dt$ for general double-well potentials $W$),
	for Ginzburg--Landau \cref{GL} we define
	\begin{align*}
		&J(u)
		:=\frac{1}{\pi}du^1\wedge du^2
	\end{align*}
	(for the two components $u=u^1+iu^2$), and for Yang--Mills--Higgs \cref{YMH} we let
	\begin{align}\label{jac.ymh}\begin{aligned}
		J(u,\nabla=\nabla_0-i\alpha)
		&:=\frac{1}{2\pi}(d\ang{\nabla u,iu}+\omega_\nabla) \\
		&\phantom{:}=\frac{1}{2\pi}d(\ang{\nabla_0 u,iu}+(1-|u|^2)\alpha)+\frac{\omega_0}{2\pi},
	\end{aligned}\end{align}
	following the convention of \cite{PS} that the Hermitian metric $\ang{\cdot,\cdot}$ on the line bundle is real-valued (or, equivalently, is the real part of a complex-valued sesquilinear form).
	
	The following result is an instance of the principle that these energies approximate the area functional.
	It can be regarded as the $\Gamma$-convergence of $E_\epsilon$ to the area; more precisely it is the so-called ``liminf inequality'' part of $\Gamma$-convergence. The other part of $\Gamma$-convergence (involving recovery sequences)
	also holds in an appropriate sense, but we omit the precise statement since it will not be needed in this paper.
	Rather, we will need the existence of a recovery sequence for our smooth minimal submanifold $M$ (see \cref{prop.constr} below).
	
	\begin{thm}\label{gamma.conv}
		The following statements hold for an arbitrary sequence $\epsilon_j\to 0$:
		\begin{itemize}
			\item given a sequence of maps $u^j:N\to\R$ with $|u^j|\le 1$ and $\sup_j E_{\epsilon_j}(u^j)<\infty$, for the Allen--Cahn energy \cref{AC},
			there exists a subsequence such that
			\begin{align*}
				&\mu(u^j)\weakto\mu,\quad \fnorm(J(u^j)-S)\to 0,\quad |S|\le\mu
			\end{align*}
			for a suitable integral $(n-1)$-cycle $S$ and a Radon measure $\mu$ (actually, $S$ is the boundary of a finite perimeter set);
			\item the same holds for Ginzburg--Landau \cref{GL}, with a limit integral $(n-2)$-cycle $S$;
			\item the same holds for Yang--Mills--Higgs \cref{YMH} (for a sequence of couples $(u^j,\nabla^j)$), with a limit integral $(n-2)$-cycle $S$.
		\end{itemize}
	\end{thm}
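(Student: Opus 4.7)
The plan is to treat the three energies in parallel, reducing \cref{gamma.conv} in each case to known Jacobian compactness theorems. Weak-$*$ subsequential compactness of the energy measures $\mu(u^j)$ (resp.\ $\mu(u^j,\nabla^j)$) is immediate from the uniform total mass bound $\sup_j E_{\epsilon_j}(u^j)<\infty$, so the substance lies in producing a limit current $S$ of the correct codimension that is integral, closed, and satisfies $|S|\le\mu$.

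The common engine in each case is a pointwise (or near-pointwise) inequality bounding the Jacobian density by the energy density. For Allen--Cahn, AM-GM applied to the Modica--Mortola product $\sqrt{2W(u)}\,|du|$ gives the clean bound $|J(u)|\le\mu(u)$; since $J(u)=d\Phi(u)$ with $\Phi(t):=\int_0^t\tfrac{3}{4}(1-s^2)\,ds$, each $J(u^j)$ is automatically an exact, hence closed, $(n-1)$-current of bounded mass, and the limit is identified with the reduced boundary of a limit finite-perimeter set via the Modica--Mortola / Hutchinson--Tonegawa analysis. For Ginzburg--Landau such a clean pointwise inequality fails, since $|du^1\wedge du^2|/\pi$ controls only half of $|du|^2/(\pi\log\epsilon^{-1})$ and the logarithmic factor is precisely what is needed to accommodate vortices; here one has to invoke the Jerrard--Soner / Alberti--Baldo--Orlandi estimates, which approximate $J(u^j)$ in flat norm by an integer polyhedral $(n-2)$-current. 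For Yang--Mills--Higgs the analogous (and in fact sharper) conclusion is provided by \cite{PS,PPS1}, with closedness of $J(u,\nabla)$ following from the Bianchi identity $d\omega_\nabla=0$ and the formula displayed in \cref{jac.ymh}.

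The main obstacle is thus the integer-polyhedral flat approximation in the codimension-two cases. The strategy is to cover the vortex set $\{|u^j|\ll 1\}$ by small balls on whose boundaries the phase $u^j/|u^j|$ (after a gauge fixing making the connection smooth on the complement, in the Yang--Mills--Higgs case) has a well-defined integer winding, and to estimate the flat-norm error between $J(u^j)$ and the resulting polyhedral current by the energy on a thin neighborhood of the vortex set. Once this is granted, Federer--Fleming compactness produces the desired integral limit $S$; the bound $|S|\le\mu$ is inherited via lower semicontinuity from the approximate inequality $|J(u^j)|\le\mu(u^j)+o(1)$, and $\partial S=0$ follows from $\partial J(u^j)=0$ together with continuity of $\partial$ under flat convergence.
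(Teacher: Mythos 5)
Your overall strategy matches the paper's, which also treats the theorem as a consequence of Modica--Mortola/Hutchinson--Tonegawa in the Allen--Cahn case and of the Jacobian compactness results of Jerrard--Soner, Alberti--Baldo--Orlandi, and Parise--Pigati--Stern in the codimension-two cases. The Allen--Cahn sketch (pointwise bound $|J(u)|\le\mu(u)$, $BV$ compactness of $F(u^j)$, identification of the limit as the boundary of a finite-perimeter set) is essentially what the paper records in its footnote.

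However, two points in your exposition of the Ginzburg--Landau case need correction. First, the assertion that the bound $|S|\le\mu$ is ``inherited via lower semicontinuity from the approximate inequality $|J(u^j)|\le\mu(u^j)+o(1)$'' is not accurate: with $\mu(u)=\frac{1}{\pi\log(\epsilon^{-1})}\big(\tmz|du|^2+\ldots\big)$ and $|J(u)|\le\frac{1}{2\pi}|du|^2$, one only gets $|J(u^j)|\le\log(\epsilon_j^{-1})\,\mu(u^j)$ pointwise, and at GL vortex cores this factor is genuinely attained, so no $o(1)$ error term saves a pointwise comparison. The inequality $|S|\le\mu$ for the limit is the \emph{content} of the sharp Jacobian compactness theorem (established through vortex-ball covering arguments, not a pointwise AM-GM), not a corollary of it. Second, you omit the passage from the Euclidean Alberti--Baldo--Orlandi statement to the closed manifold $N$: one must localize with a partition of unity, which produces $J(u^j)-S=Q^j+\partial R^j$ with $Q^j,R^j$ of infinitesimal mass but $Q^j$ not a priori a boundary, and then invoke the Giaquinta--Modica--Sou\v cek argument to write $Q^j=\partial\tilde R^j$ with comparably small mass so as to conclude flat convergence. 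Both of these gaps should be filled in (or explicitly delegated to the cited references, as the paper does) for the argument to be complete.
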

	In the previous theorem, as well as the next ones, we assume $n\ge 1$ in the first case and $n\ge 2$ in the second and third cases.
	
	In the statement, $\fnorm$ is the flat norm on $m$-boundaries, defined as
	\begin{align*}
		&\fnorm(S):=\inf\{\mass(R)\mid \de R=S\},
	\end{align*}
	where $R$ ranges among all $(m+1)$-currents with boundary $S$.
	Observe that, while $J(u)$ is always a boundary in the Allen--Cahn and Ginzburg--Landau settings,
	$J(u,\nabla)$ is only a cycle in general; on the other hand, in view of \cref{jac.ymh},
	the real homology class $[J(u,\nabla)]=[\frac{\omega_0}{2\pi}]\in H_{n-2}(N;\R)$ is fixed.
	
	For Allen--Cahn the standard (but hard to access) reference is \cite{MM}.\footnote{In this setting the proof can be sketched as follows:
		$J(u^j)$ equals $d(F(u^j))/F(1)$, where $F(t):=\int_{-1}^t\sqrt{W(s)}\,ds$ (here, $W(s)=\frac{(1-s^2)^2}{4}$).
		Since $J(u^j)$ is bounded pointwise by the energy density, $F(u^j)$ is bounded in $BV$ and, hence, \(\{u^j\}\)  has a subsequential strong limit $u^\infty$ in $L^1$. Since $\int_N W(u^j)\to 0$, we obtain $W(u^\infty)=0$ (a.e.), or equivalently $u^\infty(x)\in\{\pm 1\}$ for a.e.\ $x$.
		Thus, $J(u^j)$ converges in the flat norm to $d(F(u^\infty))/F(1)$, the boundary of $N\setminus\{u^\infty=1\}=\{u^\infty=-1\}$.}
	We refer the reader to \cite[Theorem~1.1.(i)]{ABO} or \cite[Theorem~5.2]{JSoner} for the proof in the Ginzburg--Landau setting\footnote{The work \cite{ABO} shows convergence in the flat norm, in the Euclidean setting. Applying this result locally and using a partition of unity, one obtains $J(u^j)-S=Q^j+\de R^j$ (along a subsequence),
	for an integral cycle $S$ and currents $Q^j,R^j$ of infinitesimal mass; the argument from \cite[p.~588]{GMS} then allows to write $Q^j=\de\tilde R^j$, with $\mass(\tilde R^j)\le C(N)\mass(Q^j)$, so that $\fnorm(J(u^j)-S)\to 0$.}
	and to \cite[Theorem~1.2.(i)]{PPS1} for Yang--Mills--Higgs (convergence of the Jacobians in the flat norm is a consequence of the proof from \cite{PPS1}).

	The theorem below, which is substantially more difficult to prove (especially for Ginzburg--Landau), concerns the structure of the limit measure $\mu$ when the sequence consists of \emph{critical} maps (or couples).
	
	\begin{thm}\label{asymp}
		In the same situation of the previous theorem, if the maps $u^j$ (or couples $(u^j,\nabla^j)$) are also critical for $E_{\epsilon_j}$, then the following holds:
		\begin{itemize}
			\item in the Allen--Cahn setting, $\mu=|V|$ for an integral stationary $(n-1)$-varifold $V$;
			\item for Ginzburg--Landau, $\mu=|V|+\mz|h|^2\,\mathcal{H}^n$ for a stationary rectifiable $(n-2)$-varifold $V$
			with density in $\{1\}\cup[2,\infty)$ a.e.\ and a harmonic one-form $h\in\Omega^1(N)$;
			\item for Yang--Mills--Higgs, $\mu=|V|$ for an integral stationary $(n-2)$-varifold $V$.
		\end{itemize}
	\end{thm}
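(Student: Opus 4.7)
The three statements are the well-known asymptotic theorems of Hutchinson--Tonegawa (Allen--Cahn), Lin--Rivière / Bethuel--Brezis--Orlandi (Ginzburg--Landau), and the second-named author--Stern (Yang--Mills--Higgs). My plan is to sketch the common backbone and then highlight where the three cases diverge. The backbone is: from criticality, derive the stationarity of a suitable stress-energy tensor via inner variations; use this to prove a monotonicity formula for the rescaled mass $r^{k-n}\mu(u^j)(B_r(x))$ ($k=1,2$ the codimension); pass to the limit to get that $\mu$ is the weight of a stationary $(n-k)$-varifold $V$ (possibly plus a defect); then use a tangent-measure analysis to obtain rectifiability and, finally, integrality of the density.

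Concretely, for each energy I would compute the first inner variation along an ambient vector field $X$. This yields, after integration by parts, that the tensor $T_\epsilon = e_\epsilon\, g - (\text{bilinear form in }du, \nabla u, F_\nabla)$ is divergence-free. Testing against radial fields $X(y) = \chi(|y-x|)(y-x)$ on normal coordinates around $x \in N$ and absorbing curvature errors yields a differential inequality for $r \mapsto r^{k-n} e^{Cr}\mu_{\epsilon_j}(B_r(x))$. Taking $j \to \infty$ produces monotonicity for $\mu$, hence an $(n-k)$-density $\Theta(\mu,\cdot)$ a.e.; stationarity of the varifold associated to the rectifiable part follows from the passage to the limit of the stress-energy identity against arbitrary $X \in C^1(TN)$, using the strong $L^2$ convergence of the tangential traces of the stress tensor on the concentration set.

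For Allen--Cahn, integrality follows by a blow-up: any tangent measure to $\mu$ at a rectifiable point is of the form $\Theta\,\mathcal{H}^{n-1}\mrestr \Pi$ for a hyperplane $\Pi$, and a one-dimensional reduction of the Euler--Lagrange equation forces $\Theta$ to be an integer multiple of the energy of the standard heteroclinic connection, which is normalized to $1$. For Yang--Mills--Higgs, the gauge covariance allows one to choose a Coulomb-type gauge in a neighborhood of each concentration point, where the decay estimates for $1 - |u^j|$ and for $F_{\nabla^j}$ proved in \cite{PS} are uniform; one then runs a Hutchinson--Tonegawa-style blow-up and integer density follows without defect. The Ginzburg--Landau case is the delicate one: outside the vortex set, the renormalized $1$-form $j(u^j) = \langle du^j, iu^j\rangle$ has a subsequential weak limit which, modulo passing through $L^2$ compactness and using the criticality equation $d^*j(u^j) = 0$, converges to a harmonic $1$-form $h$; this produces the diffuse part $\tfrac12|h|^2\mathcal{H}^n$. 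The concentration part is handled by an $\eta$-ellipticity argument that forces the density on the rectifiable part to be at least $1$ near single vortices and at least $2$ otherwise, giving the set $\{1\}\cup[2,\infty)$.

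The hard part is the Ginzburg--Landau integrality and the decomposition $\mu = |V| + \tfrac12 |h|^2\mathcal{H}^n$. In particular, proving that the density never takes values in $(1,2)$ rests on the sharp local energy lower bounds of Lin--Rivière and Bethuel--Brezis--Orlandi, which require a careful iteration scheme (clearing-out plus quantization) on the Ginzburg--Landau equation, together with a delicate estimate separating the ``diffuse'' contribution (captured by $h$) from the ``concentrated'' one (captured by the varifold $V$). I would ultimately invoke \cite{HT,LR,BBO,PS} for these results rather than reproducing their proofs, since they are not where the novelty of the present paper lies.
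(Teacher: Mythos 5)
The paper does not prove \cref{asymp}; it cites \cite{HT} (plus \cite[Appendix~B]{Guaraco} for the manifold setting), \cite{Cheng,Stern} (relying on \cite{BBO,LR}), \cite{PS}, and \cite{PS2} for the refined Ginzburg--Landau density, and moves on. Your sketch is a reasonable outline of the ideas behind those references, so there is no gap at the level of whether the theorem is true or how one would prove it, but two attributions in your write-up are off and would mislead a reader. First, the harmonic contribution $\tmz|h|^2\,\mathcal{H}^n$ is a feature of the closed-manifold setting: the form $h$ arises by passing to the limit in the Hodge decomposition of the supercurrent $j(u^j)=\ang{du^j,iu^j}$ after dividing by $\sqrt{\pi\log(\epsilon_j^{-1})}$ (the unnormalized $j(u^j)$ is unbounded; note also that $dj(u^j)=2\pi J(u^j)$ stays bounded, so after this normalization both $d$ and $d^*$ of the rescaled current go to zero, yielding harmonicity). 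This step is carried out in \cite{Cheng,Stern}, not in \cite{LR,BBO}, which work in $\R^n$ where the harmonic part is absent. Second, the statement that the density lies in $\{1\}\cup[2,\infty)$ is \emph{not} a consequence of the classical $\eta$-ellipticity/quantization of \cite{LR,BBO}; those give only a positive lower bound and rectifiability. The precise set $\{1\}\cup[2,\infty)$ is the main result of the recent \cite{PS2}, as the paper notes explicitly. Your final list of references should therefore include \cite{Cheng,Stern,PS2} and drop the implicit claim that \cite{LR,BBO} already yield the density dichotomy.
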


	Proofs were given in \cite{HT} (see also \cite[Appendix~B]{Guaraco} for comments on the manifold setting), \cite{Cheng,Stern} (in the manifold case, which requires slightly more care than the Euclidean case, relying on the earlier works \cite{BBO,LR}) and \cite{PS}, respectively; see also the recent work \cite{PS2} for the characterization of the density in the Ginzburg--Landau setting.

	We also have a parabolic version of the previous theorem, namely an asymptotic result for the gradient flow of the energy as $\epsilon\to 0$.
	The gradient flow equations for the three energies are
	\begin{align}\label{gf.ac}
		&\de_t u=\Delta u+\frac{(1-|u|^2)u}{\epsilon^2}
	\end{align}
	for the Allen--Cahn and Ginzburg--Landau functionals (more precisely, for the energy prior normalization, namely $\frac{2\sqrt{2}}{3}\epsilon^{-1}E_\epsilon$
	and $\pi\log(\epsilon^{-1})E_\epsilon$, respectively),
	with respect to the usual $L^2$-scalar product,	and
	\begin{align}\label{gf.ymh}
		\left\{
		\begin{aligned}
		\partial_t u&=-\nabla^*\nabla u+{\textstyle\frac{1}{2\epsilon^2}}(1-|u|^2)u, \\
		\partial_t\alpha&=-d^*(\omega_0+d\alpha)+{\textstyle\frac{1}{\epsilon^2}}\langle iu,\nabla u\rangle, 
		\end{aligned}
		\right.
	\end{align}
	for the Yang--Mills--Higgs functional (more precisely, for $2\pi E_\epsilon$), with respect to the scalar product $\int_N(\ang{u,v}+\epsilon^2\ang{\alpha,\beta})$ on the space of couples $(u,\nabla)=(u,\nabla_0-i\alpha)$.
	
	Long-term existence, uniqueness and continuous dependence on the (smooth) initial condition $u_0$ (or $(u_0,\nabla_0)$) are standard for \cref{gf.ac}, and are detailed in \cite[Section~6.3]{PPS1}
	for \cref{gf.ymh} (with the natural assumption $|u_0|\le 1$).

	\begin{thm}\label{asymp.flow}
		Given a sequence of smooth initial data $u^j_0$
		with $\sup_j E_{\epsilon_j}(u^j_0)\le\Lambda<\infty$ and $|u^j_0|\le 1$ (for a sequence $\epsilon_j\to 0$),
		up to a subsequence we have $\mu(u^j_t)\weakto\mu_t$ for all $t\ge 0$ (maps should be replaced by couples for Yang--Mills--Higgs),
		for a family of Radon measures $(\mu_t)_{t\in[0,\infty)}$ such that the following holds:
		\begin{itemize}
			\item for Allen--Cahn, $(\mu_t)_{t\ge 0}$ is an $(n-1)$-dimensional Brakke flow and $\mu_t$ is the weight of an integral varifold for a.e.\ $t$;
			\item for Ginzburg--Landau, $\mu_t=\nu_t+\mz|\alpha_t|^2$ for an $(n-2)$-dimensional Brakke flow $(\nu_t)_{t\in[0,\infty)}$
			and a closed one-form $\alpha_t$ solving the heat equation $\de_t\alpha_t=-\Delta_H\alpha_t$
			(where $\Delta_H\alpha_t=dd^*\alpha_t$ is the Hodge Laplacian); also,
			$\nu_t$ is the weight of a rectifiable varifold with $(n-2)$-density bounded below by a constant $\eta(t,N,\Lambda)>0$, for a.e.\ $t>0$;
			\item for Yang--Mills--Higgs, $(\mu_t)_{t\ge 0}$ is an $(n-2)$-dimensional Brakke flow and $\mu_t$ is the weight of an integral varifold for a.e.\ $t$.
		\end{itemize}
	\end{thm}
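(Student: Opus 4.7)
My plan is to follow the proof strategies of Ilmanen \cite{Ilmanen} in the Allen--Cahn setting, of Bethuel--Orlandi--Smets \cite{BOS} for Ginzburg--Landau, and of Parise--Pigati--Stern \cite{PPS2} for Yang--Mills--Higgs, which share a common backbone. First, I would extract the limiting family $(\mu_t)_{t\ge 0}$ by compactness. The energy dissipation identity
$$E_{\epsilon_j}(u_t^j)+\int_0^t\|\partial_s u^j_s\|_{L^2}^2\,ds=E_{\epsilon_j}(u^j_0)\le\Lambda$$
(suitably interpreted for couples in the Yang--Mills--Higgs case, using the scalar product defined before \cref{asymp.flow}) yields the uniform bound $\mu(u_t^j)(N)\le\Lambda$ for every $t\ge 0$. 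A diagonal extraction on a countable dense subset of times, combined with the monotonicity of the total energy in $t$ and weak-$*$ compactness of Radon measures on $N$, produces a subsequence along which $\mu(u_t^j)\weakto\mu_t$ for every $t\ge 0$.

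Second, I would establish a discrete Brakke-type inequality at the level of the approximating flows. For $\phi\in C^2(N)$ with $\phi\ge 0$, differentiating $\int_N\phi\,d\mu(u_t^j)$ and using \cref{gf.ac} or \cref{gf.ymh}, then integrating by parts, yields an identity schematically of the form
$$\frac{d}{dt}\int_N\phi\,d\mu(u_t^j)=-c_j\int_N\phi|\partial_t u^j|^2+\int_N\ang{T^j,\nabla^2\phi}+(\text{lower order terms}),$$
where $T^j$ is the stress-energy tensor associated with $u^j$ (or $(u^j,\nabla^j)$) and $c_j$ is a normalization constant. The crucial step is showing that, modulo an asymptotically negligible discrepancy between the total energy density and its tangential component, $T^j$ converges in an appropriate sense to the projection onto the approximate tangent planes of a varifold $V_t$ representing $\mu_t$. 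This is controlled by Huisken-type monotonicity formulas along the flow and by parabolic clearing-out lemmas, parallel to the stationary estimates behind \cref{asymp}. Passing to the limit then yields Brakke's inequality.

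Third, in the Ginzburg--Landau setting one must separately track the harmonic component of $du^j/\sqrt{\pi\log(\epsilon_j^{-1})}$: using Hodge decomposition and the fact that the finite-dimensional space of harmonic one-forms is preserved by the linearization of the flow, one extracts a limit $\alpha_t$ solving $\partial_t\alpha_t=-\Delta_H\alpha_t$ and obtains the decomposition $\mu_t=\nu_t+\tfrac12|\alpha_t|^2\mathcal{H}^n$ with $\nu_t$ the concentration Brakke flow. The Yang--Mills--Higgs case is cleaner, thanks to the sharper decay of the energy away from vortices highlighted in \cite{PS}, and the Allen--Cahn case is the classical one.

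The hard part is rectifiability, together with integrality (Allen--Cahn, Yang--Mills--Higgs) or the uniform lower density bound (Ginzburg--Landau) of $V_t$, and these properties are only asserted for a.e.\ $t>0$. Unlike in the stationary case, one cannot invoke $\eta$-ellipticity or \cref{asymp} directly at every time; one needs a genuinely parabolic version of clearing-out, ruling out vanishing-density concentration outside a negligible set of exceptional times at which vortex annihilation events may occur. Everything else---compactness of the measures, the stress-energy computation, and passage to the limit---is bookkeeping once this ingredient is in place.
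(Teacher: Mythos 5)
The paper does not actually provide a proof of \cref{asymp.flow}: it explicitly defers to the literature, citing \cite{Ilmanen,Tonegawa} for Allen--Cahn (the latter for integrality), \cite{BOS} together with \cite{Colinet} for the Ginzburg--Landau manifold setting, and \cite{PPS2} for Yang--Mills--Higgs. Your proposal correctly identifies and follows the same chain of references, and your outline (energy dissipation plus diagonal extraction for compactness, a stress-energy identity tested against $\phi\ge 0$ to pass to Brakke's inequality, separate tracking of a diffuse component in the Ginzburg--Landau case, and a parabolic clearing-out/density argument for rectifiability and integrality for a.e.\ $t$) is a fair high-level summary of what those references do.

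Two small inaccuracies worth flagging. First, for Allen--Cahn the integrality statement is due to Tonegawa \cite{Tonegawa}, not Ilmanen \cite{Ilmanen}; Ilmanen produces the Brakke flow, while integrality for a.e.\ $t$ is a separate theorem. Second, your phrase ``the harmonic component of $du^j/\sqrt{\pi\log(\epsilon_j^{-1})}$'' is not quite the right description of the diffuse term in the Ginzburg--Landau decomposition: the object $\alpha_t$ in the statement is a \emph{closed} (but generally non-harmonic) one-form evolving by the Hodge heat flow $\partial_t\alpha_t=-dd^*\alpha_t$, arising from the whole diffuse part of the superconducting current, not merely its harmonic projection; the harmonic projection is only what survives as $t\to\infty$. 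These are bookkeeping corrections rather than gaps in the argument, and since the paper itself treats the theorem as external input, your proposal is consistent with how it is used here.
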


	For the proof, we refer the reader to \cite{Ilmanen,Tonegawa} (which easily extend to manifolds), \cite{BOS} (see also \cite{Colinet} for the manifold setting) and \cite{PPS2}, respectively.
	We refer to \cite[Sections~1.8--1.9]{Ilmanen} for the definition of Brakke flow.
	
	\begin{rmk}
		The constant $\eta(t,N,\Lambda)$ obtained in \cite{BOS} is a continuous function of $t>0$.
	\end{rmk}
	
	\subsection{Main theorem, strategy and auxiliary results}\label{s.main}
	With the above notation in place, our main result now reads as follows.
	\begin{thm}\label{thm.main}
		Given a (closed, smooth, embedded, oriented) non-degenerate minimal submanifold $M^m$ of $N^n$,
		there exists a family of
		\begin{itemize}
		\item critical maps $(u_\epsilon)$ for \cref{AC} when $m=n-1$ and $M$ bounds a smooth open set,
		\item critical maps $(u_\epsilon)$ for \cref{GL} when $m=n-2$ and $M$ bounds an oriented hypersurface,
		\item critical couples $(u_\epsilon,\nabla_\epsilon)$ for \cref{YMH}
		when $m=n-2$ and $M$ is oriented, for an appropriate line bundle $L\to N$ (specifically, such that the Euler class $c_1(L)\in H^2(N;\Z)$ is Poincar\'e dual to $\llbracket M\rrbracket\in H_{n-2}(N;\Z)$),
		\end{itemize}
		such that the energy density
		\begin{align*}
			&\mu(u_\epsilon)\weakto\mathcal{H}^m\mrestr M
		\end{align*}
		in the sense of Radon measures ($\mu(u_\epsilon,\nabla_\epsilon)\weakto\mathcal{H}^m\mrestr M$ for Yang--Mills--Higgs).
	\end{thm}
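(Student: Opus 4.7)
My plan is to follow the variational scheme of Jerrard--Sternberg \cite{JSte} and Colinet--Jerrard--Sternberg \cite{CJS}, now extended to arbitrary codimension and to all three energies. I will describe the argument in the Ginzburg--Landau setting \cref{GL}; the other two cases are analogous. Since by \cref{gamma.conv} the natural topology for the $\Gamma$-convergence of $E_\epsilon$ to the area on integral cycles is the flat norm, the \cite{JSte} framework reduces the theorem, for a non-degenerate minimal submanifold $M$, to the verification of two statements, both formulated in this flat topology:
\begin{itemize}
\item \textbf{(MP)} a mountain-pass structure around $\llbracket M\rrbracket$: there is a continuous family $\{T_\sigma\}_{\sigma\in K}$ of integral $m$-cycles parameterized by a compact disk $K$, with $T_{\sigma_0}=\llbracket M\rrbracket$ at an interior point, $\sup_{\partial K}\mass(T_\sigma)<\mathcal{H}^m(M)$, and such that any $\partial K$-relative homotopic competitor family attains mass $\ge \mathcal{H}^m(M)$ at some cycle flat-close to $\llbracket M\rrbracket$;
\item \textbf{(IS)} isolation: every stationary integral $(n-2)$-varifold flat-close to $\llbracket M\rrbracket$ with total mass close to $\mathcal{H}^m(M)$ is equal to the multiplicity-one varifold carried by $M$.
\end{itemize}

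Granted (MP) and (IS), the argument proceeds as in \cite{CJS}: the $\epsilon$-approximation of this min-max produces critical maps $u_\epsilon$ of $E_\epsilon$ with $E_\epsilon(u_\epsilon)\to\mathcal{H}^m(M)$ and Jacobians $J(u_\epsilon)\to S$ in flat norm, where $S$ is an integral cycle with $[S]=\llbracket M\rrbracket$. Along a subsequence $\mu(u_\epsilon)\weakto\mu$, with $|S|\le\mu$ and total mass $\le \mathcal{H}^m(M)$. By \cref{asymp}, $\mu=|V|+\tfrac{1}{2}|h|^2$ for a stationary rectifiable $(n-2)$-varifold $V$ and a harmonic one-form $h$; the mass budget forces $h\equiv 0$ and $V$ integral with density one on its support, and (IS) then identifies $V$ with the unit-multiplicity varifold on $M$, yielding $\mu=\mathcal{H}^m\mrestr M$.

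Both (MP) and (IS) are essentially trivial in a $C^{1,\alpha}$ normal-graphical topology, as direct consequences of the invertibility of the Jacobi operator $J_M$ on normal sections. The new work of the paper is to upgrade them to the flat topology, using the regularity theory for minimal submanifolds. For (IS), given $V_k$ stationary integral with $|V_k|\weakto\mathcal{H}^m\mrestr M$ and $[V_k]\to\llbracket M\rrbracket$ in flat norm, Allard's $\epsilon$-regularity theorem, combined with the constancy theorem and the tight mass bound (which rule out higher-multiplicity pieces and extra connected components), implies that $V_k$ is eventually a smooth normal graph over $M$ of a small section $\phi_k$; the minimal graph equation is a quasilinear elliptic PDE whose linearization at $0$ is $J_M$, so non-degeneracy forces $\phi_k\equiv 0$ for $k$ large. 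For (MP), the negative eigenspace of $J_M$ furnishes a smooth finite-parameter family of normal sections whose graphs realize the desired saddle; arbitrary flat-close competitors are then handled by nearest-point projection onto a tubular neighbourhood of $M$ combined with the same $\epsilon$-regularity input.

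I expect (IS) to be the principal obstacle. The difficulty is that \cref{asymp} in the Ginzburg--Landau case allows density-$\ge 2$ pieces and a diffuse harmonic contribution $\tfrac{1}{2}|h|^2$, so isolation cannot hold without the tight mass bound and the homological constraint on $[J(u_\epsilon)]$ coming from the min-max scheme. Showing that these two pieces of information together put one in the hypotheses of Allard's theorem, and then combining the resulting graphical representation with non-degeneracy of $J_M$, is the crucial technical step; once this is in place, the construction of the critical points is a straightforward adaptation of \cite{CJS} to the present setting, and the three energies are handled by the same scheme.
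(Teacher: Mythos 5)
Your high-level scheme matches the paper's: one builds a finite-dimensional mountain-pass family from the negative eigendirections of the Jacobi operator, sets up a projection onto $\R^\ell$ via the Jacobians, runs the gradient flow for a long time, and uses a degree argument plus a flat-neighborhood minimality statement (the paper's \cref{thm.core2}) and a rigidity/isolation statement (the paper's \cref{thm.core.main}, \cref{thm.core}) to produce a critical point whose Jacobian is close to $\llbracket M\rrbracket$. Your (MP) corresponds to \cref{thm.core2} together with the deformations $(M_w)$, and your (IS) corresponds to \cref{thm.core}. So the architecture is essentially correct.

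There is, however, a genuine gap in your treatment of (IS). You formulate (IS) for stationary \emph{integral} varifolds and propose to prove it by Allard's $\epsilon$-regularity theorem plus the constancy theorem. The place where (IS) is actually needed is inside the gradient-flow step: one must show that the Jacobians $J(u^{\epsilon,w}_t)$ do not escape a flat-neighborhood of $\llbracket M\rrbracket$ for any $t\in[0,T]$, and this requires passing to a parabolic limit via \cref{asymp.flow}, not the static result \cref{asymp}. For Ginzburg--Landau, \cref{asymp.flow} only produces a stationary \emph{rectifiable} varifold with an $(n-2)$-density bounded below by some $\eta=\eta(t,N,\Lambda)>0$, not known to be $\ge 1$ and not known to be integral. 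Your proposed route — extract integrality from "the tight mass bound and the homological constraint" — is circular here: the tight mass bound along the flow (that the energy is essentially constant equal to $\mathcal{H}^m(M)$) is itself obtained by knowing the Jacobians stay flat-close, which is precisely what (IS) is meant to establish. So Allard's theorem cannot be invoked, and your version of (IS) cannot close the argument in the Ginzburg--Landau case. This is exactly why the paper proves \cref{thm.core.main} in its full strength, for rectifiable stationary varifolds with density only $\ge\eta>0$ that additionally carry a nontrivial integral cycle inside; the proof is not via Allard but via a multigraph parameterization over $M$, an excess--displacement estimate, and extraction of a nontrivial approximate Jacobi field contradicting non-degeneracy. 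This construction is the principal technical novelty of the paper; your Allard-based shortcut works only in the Allen--Cahn and Yang--Mills--Higgs settings, where the known density lower bound is $1$.

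A secondary remark: once the flow argument has been carried out and the Jacobians are known to converge in flat norm to $\llbracket M\rrbracket$ itself (not merely to some cycle in its homology class), the identification $\mu=\mathcal{H}^m\mrestr M$ for the final critical points follows directly from \cref{gamma.conv} and the energy upper bound; one does not need to re-invoke (IS) or \cref{asymp} at that stage, contrary to what your sketch suggests.
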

	Note that, since $M$ is non-degenerate, we must necessarily have $m\ge 1$.
	In the statement, the family is parametrized by $\epsilon\in(0,\bar\epsilon)$, for an appropriate $\bar\epsilon>0$ depending on $M$ and $N$.
	In the Allen--Cahn and Ginzburg--Landau cases, we endow $M$ with the boundary orientation. Thus, in all cases the oriented submanifold $M$ gives rise to a well-defined integral $m$-cycle $\llbracket M\rrbracket$. It should be noted that, in fact, only the orientability of $M$ and $N$ plays a role; the other topological conditions could be dropped by using sections of suitable (real or complex) line bundles over $N$.
	
	\begin{rmk}
		The non-degeneracy assumption cannot be dropped entirely.
		For instance, on $S^1=\R/\Z$ it is easy to check (due to the explicit nature of the solutions) that two points $M=\{p,q\}$ can be realized as the concentration set in the Allen--Cahn setting
		only when they are antipodal.\footnote{We wish to thank Christos Mantoulidis for pointing out this simple example.} Similar examples should be expected also in higher dimension.
	\end{rmk}
	
	As explained earlier, we follow the same strategy of \cite{CJS},  the main difficulty being to obtain \cref{thm.core2} and \cref{thm.core} below in arbitrary dimension. In particular, our main main theorem follows easily from the following one (cf.\ \cite[Proposition~4.1]{CJS}).

	\begin{thm}\label{thm.crit}
		Given $\delta>0$, there exists $\epsilon_0(M,\delta)>0$ such that the following holds.
		Given any time $T>0$, for $\epsilon\in(0,\epsilon_0)$ there exists a solution $(u^\epsilon_t)$ or $(u^\epsilon_t,\nabla^\epsilon_t)$ to the gradient flow equation for $E_\epsilon$ (chosen among the three energies) such that
		\begin{align}\label{crit.bds}
			&\fnorm(J(u^\epsilon_t)-\llbracket M\rrbracket )\le\delta,
			\quad |E_\epsilon(u^\epsilon_t)-\mathcal{H}^m(M)|\le\delta,
		\end{align}
		for all $t\in[0,T]$ (with $u^\epsilon$ replaced by $(u^\epsilon,\nabla^\epsilon)$ for Yang--Mills--Higgs).
	\end{thm}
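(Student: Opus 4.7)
The plan is to mimic the strategy of \cite[Proposition~4.1]{CJS}, replacing the one-dimensional inputs used there by the higher-dimensional \cref{thm.core} and \cref{thm.core2}, which will be established later in the paper. The core of the proof is to run the gradient flow starting from a recovery sequence for \(M\) and, by a contradiction argument based on the isolation of \(M\), show that the flow cannot escape the prescribed flat- and energy-neighborhoods on \([0,T]\). In fact the \emph{same} solution will end up working for every \(T>0\), which is why the dependence of \(\epsilon_0\) on \(T\) does not appear.

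More concretely, I would take \(u^\epsilon_0\) (respectively \((u^\epsilon_0,\nabla^\epsilon_0)\)) to be a recovery sequence provided by \cref{prop.constr}, so that as \(\epsilon\to 0\) one has \(\mu(u^\epsilon_0)\weakto\mathcal{H}^m\mrestr M\), \(\fnorm(J(u^\epsilon_0)-\llbracket M\rrbracket)\to 0\), and \(E_\epsilon(u^\epsilon_0)\to\mathcal{H}^m(M)\); then let \(u^\epsilon_t\) be the resulting solution of the gradient flow. Energy monotonicity immediately yields \(E_\epsilon(u^\epsilon_t)\le\mathcal{H}^m(M)+\delta\) for all \(t\ge 0\) once \(\epsilon\) is small enough, so only the flat-norm bound and the matching lower bound on the energy need to be verified. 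Suppose they fail: then there exist \(\delta>0\), \(\epsilon_j\to 0\), \(T_j>0\), and first exit times \(\tau_j\in(0,T_j]\) at which the flat-norm bound or the lower energy bound is attained with equality. Along a subsequence, \cref{asymp.flow} provides a Brakke flow \((\mu_t)\) with \(\mu_0=\mathcal{H}^m\mrestr M\) such that \(\mu(u^{\epsilon_j}_t)\weakto\mu_t\) for every \(t\); applying \cref{gamma.conv} at each fixed \(t\) (and diagonalizing), the Jacobians \(J(u^{\epsilon_j}_t)\) converge in flat norm to integral cycles \(S_t\) in the homology class of \(\llbracket M\rrbracket\), with \(|S_t|\le\mu_t\).

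Choosing \(\tau^\ast\) close to the subsequential limit of \(\tau_j\) at which the instantaneous dissipation tends to zero --- which is possible by Fubini applied to the global dissipation bound \(\int_0^{T_j}\|\partial_t u^{\epsilon_j}_t\|_{L^2}^2\,dt\le E_{\epsilon_j}(u^{\epsilon_j}_0)\le\mathcal{H}^m(M)+o(1)\) (suitably normalized for each of the three energies) --- the configurations \(u^{\epsilon_j}_{\tau^\ast}\) are asymptotically critical, so \cref{asymp} identifies \(\mu_{\tau^\ast}\) with the weight of a stationary integral \(m\)-varifold of total mass at most \(\mathcal{H}^m(M)\), whose associated cycle \(S_{\tau^\ast}\) is at flat distance at most \(\delta\) from \(\llbracket M\rrbracket\). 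The contradiction is now produced by the isolation statement \cref{thm.core}, which forbids any such stationary integral varifold other than \(\mathcal{H}^m\mrestr M\) in the relevant flat/varifold neighborhood of \(M\), forcing \(S_{\tau^\ast}=\llbracket M\rrbracket\) and \(\mu_{\tau^\ast}=\mathcal{H}^m\mrestr M\), against the way \(\tau_j\) was selected. The main obstacle lies entirely in proving \cref{thm.core}, together with the mountain-pass construction \cref{thm.core2}, in arbitrary dimension and in the \emph{weak} flat/varifold topology: the one-dimensional setting of \cite{CJS} does not extend by perturbative means, and establishing these inputs is precisely where the regularity-theoretic ingredients highlighted in \cref{s.main} come into play.
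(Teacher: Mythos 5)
Your proposal has a fundamental gap: it entirely omits the mountain-pass/degree-theoretic machinery that is the core of the paper's argument, and this omission is fatal whenever $M$ is an \emph{unstable} minimal submanifold. You propose to start the gradient flow from the single recovery sequence $u^\epsilon_0$ (corresponding to $w=0$ in the paper's notation) and argue by contradiction that the flow cannot exit the flat-/energy-neighborhood of $\llbracket M\rrbracket$. But if the Morse index $\ell$ of $M$ is positive, the flow starting from the $w=0$ initial datum \emph{will} generically escape: $M$ is a saddle point, so the small perturbations present in any recovery sequence are amplified by the flow, the energy drops strictly below $\mathcal{H}^m(M)$, and the Jacobians drift away in flat norm. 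Nothing in \cref{thm.core} prevents this, because \cref{thm.core} only rules out \emph{stationary} limits close to $M$; it says nothing about non-stationary configurations escaping along an unstable direction. Your approach is, in essence, the argument the paper gives in \cref{ell.0} for the case $\ell=0$, and it cannot be made to work when $\ell\ge 1$ without some additional mechanism that forces the energy to remain close to $\area$.

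The mechanism the paper uses is precisely what you skipped. The paper runs the flow from a \emph{family} of initial data $(u^{\epsilon,w}_0)_{w\in\bp}$ provided by \cref{prop.constr}, corresponding to the family $(M_w)_{w\in\bp}$ of lower-area deformations of $M$, with a cutoff $\chi$ so that the flow speed vanishes on $\partial\bp$. Using the projection $P:\{\text{cycles}\}\to\R^\ell$ and \cref{thm.core2}, a topological-degree argument (the invariance of $\deg(f(\cdot,t),A_t(\sigma))$ across $\sigma\in[\tmz\delta_1,\delta_1]$ and across $t$) produces, for \emph{each} $\epsilon$ and $T$, a parameter $w=w(\epsilon,T)\in\bp$ such that $P(J(u^{\epsilon,w}_T))=0$ and $\fnorm(J(u^{\epsilon,w}_T)-\llbracket M\rrbracket)\le\delta_1$. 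It is \emph{only then}, via \cref{thm.core2} and $\Gamma$-convergence, that one obtains $E_\epsilon(u^{\epsilon,w}_T)\ge\area - o(1)$, and hence that the total energy dissipated along the flow is $o(1)$. This is the input that makes the final contradiction argument work: at a first exit time one can pass to a Brakke flow whose measure is constant in time (because the dissipation vanishes), hence stationary, and then \cref{thm.core} applies. In your outline, the "global dissipation bound $\le E_\epsilon(u_0)\le\mathcal{H}^m(M)+o(1)$" is just the trivial $O(1)$ bound; the Fubini-type selection of a $\tau^\ast$ with small instantaneous dissipation near the exit time does not follow from it, and indeed cannot follow, because without the degree argument one has no control forcing $E_\epsilon(u_0)-E_\epsilon(u_{T})=o(1)$. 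Relatedly, your assertion that "the \emph{same} solution works for every $T>0$" is false in the paper's proof: the parameter $w=w(\epsilon,T)$ genuinely depends on $T$, which is exactly why the theorem is phrased with $T$ as an input.

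Finally, note that \cref{thm.core2} does real work here — it is not just an auxiliary fact subsumed under \cref{thm.core}. It is used twice: once to show that $f(\cdot,t)\neq 0$ on the annulus $\bar A_t(\tmz\delta_1)\setminus A_t(\delta_1)$ (which is what allows the degree to propagate in time), and once to convert $P(S)=0$ together with $\mass(S)\le\area$ into $\mass(S)\ge\area$ at time $T$. Your sketch mentions \cref{thm.core2} only as "the mountain-pass construction" without deploying it, and this is precisely the missing ingredient.
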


	Indeed, since the time $T$ is arbitrarily large, we can select an intermediate time such that $\de_t u^\epsilon_t$ is arbitrarily small in $L^2$,
	and obtain a critical map in the limit. From the second part of \cref{crit.bds} we only use the information that $E_\epsilon(u^\epsilon_t)\le\mathcal{H}^m(M)+\delta$.
	Then \cref{gamma.conv} easily implies that $M$ is the energy concentration set (since we can make $\delta\to 0$ as $\epsilon\to 0$).

	The strategy to obtain \cref{thm.crit} can be outlined as follows: we define a finite-dimensional family $(M_w)_{w\in\bp}$ of deformations of $M$ with lower area,
	parametrized by a ball $\bp\subset\R^\ell$ (with $M_0=M$), where $\ell$ (the Morse index of $M$) measures the instability of $M$.
	Then we create a min-max geometry: we introduce a function $P$ projecting the set of $m$-submanifolds (or cycles) to $\R^\ell$,
	with $P(M_w)=w$ (hence, $P(M_0)=0$), in such a way that the area of $M'$ is always larger than the area of $M$ whenever $P(M')=0$ and $M'$ is close to $M$
	(in a weak sense). Thus, $\{P(\cdot)=0\}$ can be regarded as the high-energy obstacle in the mountain pass situation, and $M$ becomes the precise location of the mountain pass.

	More precisely, for the map $P$ constructed in the next section, the following theorem holds. It shows a stronger statement, initially proved in another version by White \cite{White},
	namely that $M$ is a strict local minimizer for the perturbed area functional $\tilde\mass(S):=\mass(S)+\lambda|P(S)|^2$ (for a suitably large $\lambda>0$).
	For this functional, $M$ has a positive second variation; the difficulty (and usefulness) of the result lies in the fact that minimality holds in a \emph{flat} neighborhood of $M$. We also mention that a very similar statement is proved, in dimension one, in the PhD thesis of Mesaric \cite{Mesa}.\footnote{We wish to thank Robert Jerrard for pointing this out to us.}

	\begin{thm}\label{thm.core2}
	The cycle $\llbracket M\rrbracket $ is a strict local minimizer for $\tilde\mass$ among integral $m$-cycles, in a neighborhood for the flat norm. More precisely, there exists $\delta_1(M)>0$ such that, for any integral $m$-cycle $S$ with $\fnorm(S-\llbracket M\rrbracket )\le\delta_1$, one has
	\begin{align*}
	&\tilde\mass(S)\ge\tilde\mass(\llbracket M\rrbracket )=\area,
	\end{align*}
	with equality only if $S=\llbracket M\rrbracket $.
	\end{thm}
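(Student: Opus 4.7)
The plan is to argue by contradiction, reducing the claim to the \emph{smooth} strict local minimality of $M$ for $\tilde{\mass}$ — which is essentially due to White and amounts to the fact that, by the choice of $P$ and non-degeneracy of $M$, for $\lambda$ sufficiently large the second variation of $\tilde{\mass}$ at $\llbracket M\rrbracket$ is strictly positive definite on all normal variations. Concretely, I suppose there is a sequence of integral $m$-cycles $S_k$ with $\fnorm(S_k-\llbracket M\rrbracket)\to 0$ and $\tilde{\mass}(S_k)\le\mathcal{H}^m(M)$, with $S_k\neq\llbracket M\rrbracket$ in case of equality, and aim to contradict the smooth lower bound.

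The first step is to promote the flat convergence to varifold convergence. Since $\lambda|P(S_k)|^2\ge 0$ we have the mass bound $\mass(S_k)\le\mathcal{H}^m(M)$, and combining flat convergence with Federer--Fleming compactness and the lower semicontinuity of the mass gives $\mass(S_k)\to\mathcal{H}^m(M)$. Hence $|S_k|\weakto\mathcal{H}^m\mrestr M$ as Radon measures, so $S_k$ converges to $\llbracket M\rrbracket$ as integral varifolds with multiplicity one along the smooth submanifold $M$. In particular, $\spt(S_k)$ eventually lies in any prescribed tubular neighborhood of $M$, the density of $|S_k|$ is close to $1$ at $\mathcal{H}^m$-a.e.\ point of $M$, and the integrated tilt-excess of $S_k$ with respect to $TM$ tends to zero.

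The heart of the argument — and the step I expect to be the main obstacle — is to upgrade this varifold convergence to a quantitative graphical approximation. What I would try to establish is the existence of normal vector fields $f_k\in C^1(M;NM)$ with $\|f_k\|_{C^1(M)}\to 0$ and integral cycles $R_k$ with $\mass(R_k)\to 0$, such that inside a fixed tubular neighborhood of $M$
\begin{equation*}
S_k = (F_{f_k})_*\llbracket M\rrbracket + R_k,\qquad F_{f_k}(x):=\exp_x(f_k(x)).
\end{equation*}
The difficulty is that $S_k$ is not assumed to be almost stationary, so classical Allard regularity is not directly available; instead one must exploit the combination of small flat distance to the smooth, non-degenerate submanifold $M$ and mass converging to $\mathcal{H}^m(M)$, together with a comparison/iteration argument drawn from the regularity theory for minimal surfaces, to force excess decay and produce the graph. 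This is precisely where the authors' main contribution lies, extending the one-dimensional treatment of \cite{CJS} to arbitrary dimension and codimension two.

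Once the graphical decomposition is in hand, the smooth minimality of $M$ for $\tilde{\mass}$ closes the argument. By continuity of $P$ in the flat norm and $\mass(R_k)\to 0$, one has $P(S_k)=P((F_{f_k})_*\llbracket M\rrbracket)+o(1)$ and $\mass(S_k)=\mass((F_{f_k})_*\llbracket M\rrbracket)+o(1)$, so the smooth Taylor expansion
\begin{equation*}
\tilde{\mass}\bigl((F_{f_k})_*\llbracket M\rrbracket\bigr)\ge\mathcal{H}^m(M)+c\,\|f_k\|_{H^1(M)}^2
\end{equation*}
for some $c>0$ and $\|f_k\|_{C^1}$ small, guaranteed by the positive definiteness of the second variation of $\tilde{\mass}$ at $\llbracket M\rrbracket$, yields $\tilde{\mass}(S_k)\ge\mathcal{H}^m(M)+c\,\|f_k\|_{H^1(M)}^2-o(1)$. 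Combined with $\tilde{\mass}(S_k)\le\mathcal{H}^m(M)$ this forces $f_k\equiv 0$ and $R_k=0$ for large $k$, contradicting $S_k\neq\llbracket M\rrbracket$ and completing the proof.
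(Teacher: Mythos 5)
Your overall architecture — contradiction, promote flat convergence to weight convergence, show the cycle is graphical over $M$, invoke the positive second variation of $\tilde\mass$ — matches the paper's. But there are two concrete gaps, one of which you acknowledge without resolving, and one you do not notice.

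First, you assert that since $|S_k|\weakto\mathcal{H}^m\mrestr M$, the support $\spt(S_k)$ eventually lies in a prescribed tubular neighborhood of $M$. This is false: weak-$*$ convergence of the weights only forces the mass outside $B_\rho(M)$ to vanish, not the support. The paper handles this via \cref{cut}, a cut-and-fill lemma: slice $S$ at a good radius $r$, discard $S\mrestr(N\setminus B_r(M))$, and cap off the slice boundary by an isoperimetric filling whose mass (and contribution to $P$) is controlled; the resulting cycle $S'$ is supported in $\bar B_\rho(M)$, has $\tilde\mass(S')\le\tilde\mass(S)$, and is still $\neq\llbracket M\rrbracket$. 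Without this step there is nothing preventing $S_k$ from having tiny but topologically awkward pieces far from $M$.

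Second — the gap you flag as the main obstacle — your proposal to produce the graph $f_k$ from an arbitrary near-minimizer $S_k$ has no mechanism, and indeed there is none: a general integral cycle with small flat distance and mass close to $\mathcal{H}^m(M)$ need not be $C^1$-graphical. The paper's key idea, which you are missing, is to \emph{replace} $S_k$ by a genuine \emph{minimizer} $\bar S$ of $\tilde\mass$ among cycles of the form $\llbracket M\rrbracket+\partial R$ with $R$ supported in $\bar B_\rho(M)$, after first cutting down as above. Since $\tilde\mass(\bar S)\le\tilde\mass(S_k)\le\mathcal{H}^m(M)$ and $\bar S$ is a minimizer, one can establish (and the paper does, in an intermediate lemma) a quasi-minimality estimate $\mass(\bar S)\le(1+Cr)\mass(\bar S+X)+Cr\mass(X)$ for small competitors $X$; this is exactly the hypothesis of the almost-minimizer regularity theory of Duzaar--Steffen \cite{DS} (equivalently Bombieri, Schoen--Simon). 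Combined with the excess going to zero as $\rho\to 0$, that theory yields $\bar S^\rho=\llbracket M_{\varphi^\rho}\rrbracket$ \emph{exactly}, for a nontrivial $\varphi^\rho\to 0$ in $C^{1,\alpha}$ — no error term $R_k$. Then $\tilde\mass(\bar S^\rho)\le\mathcal{H}^m(M)$ directly contradicts \cref{second.var.bd.cons}. Note that your closing step, as written, does not close: from $\tilde\mass(S_k)\le\mathcal{H}^m(M)+o(1)$ and $\tilde\mass(S_k)\ge\mathcal{H}^m(M)+c\|f_k\|_{H^1}^2-o(1)$ one only gets $\|f_k\|_{H^1}\to 0$, not $f_k\equiv 0$ or $R_k=0$; working with the exact minimizer sidesteps this. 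Finally, a minor point of framing: the authors describe the proof of \cref{thm.core2} as essentially standard (citing \cite{White,DM,IM,CL,Mesa}); their main novelty is the rigidity result \cref{thm.core.main}, not this theorem.
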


	We have the function $P$, projecting cycles to $\R^\ell$, but we also have the Jacobian $J(\cdot)$, which maps any function $u$ (or couple $(u,\nabla)$) to a cycle.
	The idea is to use $J(\cdot)$ and $P$ to find approximate critical points for $E_\epsilon$, while keeping track of their distance (via $J(\cdot)$) from the cycle $\llbracket M\rrbracket $.
	
	Specifically, once we have reproduced the mountain pass situation, we approximate each $M_w$ with a map $u^{\epsilon,w}_0$ (or a couple $(u^{\epsilon,w}_0,\nabla^{\epsilon,w}_0)$)
	with energy close to the area of $M_w$ and $J(u^{\epsilon,w}_0)$ close to $\llbracket M_w\rrbracket $.
	Then, by applying the gradient flow for a time $T$, we get new maps $u^{\epsilon,w}_T$, and a degree-theoretic argument
	shows that there exists a $w\in\bp$ such that $P(J(u^{\epsilon,w}_T))=0$, with $J(u^{\epsilon,w}_T)$ in the neighborhood where \cref{thm.core2} applies.
	In particular, using also \cref{gamma.conv}, the energy of this map at time $T$ is at least the area of $M$ (up to an infinitesimal error), meaning that
	the energy almost did not change along the flow.
	
	Actually, the aforementioned argument uses again \cref{thm.core2}, together with $\Gamma$-convergence.	
	Roughly speaking, the idea is that at each time $t\in[0,T]$ there must be some $w$ such that $P(J(u^{\epsilon,w}_t))=0$.
	If at some time the Jacobian $J(u^{\epsilon,w}_t)$ escapes the weak (closed) neighborhood of $\llbracket M\rrbracket $ where \cref{thm.core2} holds,
	assuming $t$ is the first time when this happens, in the limit $\epsilon\to 0$ we get a cycle $S$ with $P(S)=0$ and area bounded by $\area$,
	on the boundary of the neighborhood, contradicting \cref{thm.core2}.
	
	Finally, we have to ensure that $J(u^{\epsilon,w}_t)$ stays close to $\llbracket M\rrbracket $ for all times $t\in[0,T]$ (for a fixed $T$ and with $w$ chosen as above).
	This can be proved for $t=T$ with the same argument, but in order to deal with all the interval another idea is needed.
	We look at the first time when the Jacobian reaches a certain positive distance from $\llbracket M\rrbracket $. Since the energy along the flow is almost unchanged,
	we have a stationary situation in the limit $\epsilon\to 0$, and we obtain a contradiction to \cref{thm.core} below, which in turn follows easily from the next rigidity result.
	
	\begin{thm}\label{thm.core.main}
		Let \(M\) and \(N\) be as above. Given $\eta,\Lambda>0$, there exists $\delta(M,N,\eta,\lambda)\in(0,\rho)$ with the following property:
		if a stationary rectifiable $m$-varifold $V$ is such that
		\begin{itemize}
		\item $d_H(\spt|V|,M)\le\delta$ (with $d_H$ the Hausdorff distance),
		\item $\mathbb{M}(V)\le\Lambda$ (with $\mathbb{M}(V)$ the total mass of $V$),
		\item $\Theta^m(|V|,y)\ge\eta$ for all $y\in\spt|V|$,
		\item $\spt|V|\cap\pi^{-1}(x)$ is nonempty for all $x\in M$, and consists of a single point for $x\in M\setminus E$, for a set $E\subseteq M$ with $\mathcal{H}^m(E)\le\delta$,
		\end{itemize}
		then $\spt|V|=M$.
	\end{thm}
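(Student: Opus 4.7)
The plan is to argue by contradiction and compactness. Suppose the theorem fails: there exist $\eta,\Lambda>0$, a sequence $\delta_k\downarrow 0$, and stationary rectifiable $m$-varifolds $V_k$ satisfying all four hypotheses with $\delta=\delta_k$, yet $\spt|V_k|\neq M$. Using the mass bound $\mass(V_k)\le\Lambda$ and the uniform density bound, I extract a subsequential varifold limit $V_\infty$. Because $d_H(\spt|V_k|,M)\to 0$, the support $\spt|V_\infty|$ is contained in $M$, the limit $V_\infty$ is stationary, and the fourth hypothesis (single fibre off a set of arbitrarily small $\mathcal{H}^m$-measure) passes to the limit to identify $V_\infty$ with the multiplicity-one cycle $\llbracket M\rrbracket$. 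In particular $\mass(V_k)\to\mathcal{H}^m(M)$.

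Next I would upgrade this soft convergence to a uniform $C^{1,\alpha}$ graphical representation of $\spt|V_k|$ over $M$. At each $x\in M$ and small fixed radius $r>0$, Hausdorff closeness of supports combined with the mass convergence and the lower density bound forces both the tilt excess (relative to $T_xM$) and the mass ratio $\mass(V_k\mrestr B_r(x))/(\omega_m r^m)$ to approach their model values for $M$ itself. Allard's $\epsilon$-regularity theorem then produces, for large $k$, a $C^{1,\alpha}$ normal graph representation of $\spt|V_k|\cap B_r(x)$ over a neighbourhood of $x$ in $M$. A covering and patching argument, using the single-fibre hypothesis on $M\setminus E_k$ to match the local sheets, produces a global normal section $f_k\in C^{1,\alpha}(\nu M)$, defined on all of $M$, whose exponential image equals $\spt|V_k|$, with $\|f_k\|_{C^{1,\alpha}}\to 0$.

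Finally, stationarity of $V_k$ translates into the nonlinear minimal surface equation $\mathcal{M}(f_k)=0$ for normal sections, whose linearization at $f=0$ is the Jacobi operator $J_M$ of $M$ acting on $\nu M$. Non-degeneracy of $M$ is precisely the statement that $J_M\colon C^{2,\alpha}(\nu M)\to C^{0,\alpha}(\nu M)$ is an isomorphism. After a Schauder bootstrap taking $f_k$ from $C^{1,\alpha}$ to $C^{2,\alpha}$, the inverse function theorem applied in a small $C^{2,\alpha}$-ball about $0$ yields uniqueness of the zero solution, hence $f_k\equiv 0$ for all large $k$, i.e.\ $\spt|V_k|=M$, a contradiction.

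The main obstacle I anticipate is the regularity step: passing from the purely topological and measure-theoretic hypotheses to a genuine uniform $C^{1,\alpha}$ normal graph over \emph{all} of $M$, not merely outside $E_k$. Since $V_k$ is only rectifiable (not a priori integral), Allard's $\epsilon$-regularity must be invoked through the lower density bound $\eta$, and stitching the local sheets into a single-valued section across the exceptional set $E_k$ requires ruling out any folding of $\spt|V_k|$ over points of $E_k$. This is where the combination of the monotonicity formula (controlling the mass in balls from below in terms of $\eta$), the smallness of $\mathcal{H}^m(E_k)$, and the global mass bound $\mass(V_k)\le\mathcal{H}^m(M)+o(1)$ must be used in tandem to prevent extra sheets from appearing.
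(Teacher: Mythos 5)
Your proposal diverges from the paper in a way that hits a genuine obstruction. The paper does \emph{not} pass through Allard's $\epsilon$-regularity theorem and a $C^{2,\alpha}$ graph; it deliberately avoids any regularity theory for the section and instead runs a linearization argument entirely in $W^{1,2}$. Concretely, the paper builds a merely \emph{Lipschitz} normal section $f$ by taking, over each $x\in M$ off a small ``bad set'' $S$ (where the local excess of $V$ is large), the center of the smallest axis-parallel box containing $\exp_x^{-1}(\spt|V|\cap\pi^{-1}(x))$, extends by Lipschitz extension across $S$, proves a displacement--excess inequality ($\exc(V)\le C\dis(V)$ by testing stationarity against $\operatorname{grad}\frac{d_M^2}{2}$), shows that the section rescaled by $\dis(V)$ is bounded in $W^{1,2}(M)$, extracts a weak limit, proves it is nontrivial, and finally shows (using Allard's strong constancy lemma for the multiplicity) that the weak limit is a Jacobi field, contradicting non-degeneracy.

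The gap in your approach is the Allard step. Allard's $\epsilon$-regularity requires the density lower bound $\Theta^m\ge 1$ (equivalently, density bounded below by the same constant to which the mass ratio is close); here you only have $\Theta^m\ge\eta$, and the mass ratio in small balls will be close to a constant $\bar\theta\in[\eta,1]$ which need not equal $\eta$ (since $V$ is only rectifiable, not integral). You cannot renormalize to fix both constraints simultaneously: dividing $V$ by $\eta$ gives the density hypothesis but ruins the mass-ratio smallness, and dividing by $\bar\theta$ gives the mass-ratio smallness but destroys the density lower bound. The paper's remark after \cref{thm.core} makes this explicit: the Allard route only works when one can take $\eta=1$, which suffices for the Allen--Cahn and Yang--Mills--Higgs applications but not for Ginzburg--Landau. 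So your ``main obstacle'' paragraph is pointing at the right place, but the obstruction is not in stitching local sheets across $E_k$; it is that the local $C^{1,\alpha}$ graphicality is already unavailable for general $\eta<1$. A further small inaccuracy: the compactness step gives $V_j\weakto\bar\theta M$ with $\bar\theta\in[\eta,1]$ a priori unknown, not $V_\infty=\llbracket M\rrbracket$; the regularity of the multiplicity function $\theta_j$ is itself not controlled beyond an $L^1$ bound (via Allard's strong constancy lemma), which is precisely why the paper performs its box-center construction rather than, say, averaging the sheets.

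Once past the regularity gap, your remaining steps (Schauder bootstrap, inverse function theorem for the Jacobi operator) would be fine for an honest $C^{2,\alpha}$ graph; the point is that the hypotheses of the theorem are too weak to reach that stage by known $\epsilon$-regularity tools, and the paper's $W^{1,2}$ linearization is designed exactly to sidestep this.
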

	
	In the statement, $\pi:\bar B_\delta(M)\to M$ denotes the nearest point projection.
	
	\begin{corollary}\label{thm.core}
		Given $\eta>0$, there exists $\delta_2(M,\eta)>0$ with the following property:
		there cannot exist a stationary rectifiable $m$-varifold $V$ with density at least $\eta$ and an integral $m$-cycle $S$
		satisfying
		\begin{align*}
		&0<\fnorm(S-\llbracket M\rrbracket )\le\delta_2,\quad |S|\le|V|,\quad\mass(V)\le\mathcal{H}^m(M)+\delta_2.
		\end{align*}
	\end{corollary}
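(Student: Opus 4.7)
The proof proceeds by contradiction, reducing to Theorem \ref{thm.core.main}.

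\textbf{Limit extraction.} Suppose the conclusion fails for some $\eta>0$: there exist $\delta_j\to 0$ and pairs $(V_j,S_j)$ as in the statement, with $S_j\ne\llbracket M\rrbracket$. By varifold and flat compactness, pass to a subsequence with $V_j\to V_\infty$ as varifolds ($V_\infty$ stationary rectifiable) and $S_j\to\llbracket M\rrbracket$ in flat norm. Since $\mass(S_j)\le\mass(V_j)\le\area+\delta_j$ and $\mass(\llbracket M\rrbracket)=\area$, flat convergence together with matching mass upgrades to weak convergence $|S_j|\weakto\hau^m\mrestr M$. Combined with $|S_j|\le|V_j|$ and the lower semicontinuity bound $\mass(V_\infty)\le\liminf\mass(V_j)\le\area$, this forces $|V_\infty|=\hau^m\mrestr M$ and $\mass(V_j)\to\area$. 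Hence $V_j\to\underline M$ in the narrow varifold topology (weak convergence plus convergence of total mass).

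\textbf{Verifying the hypotheses of Theorem \ref{thm.core.main}.} Set $\Lambda:=\area+1$. For $j$ large the mass and density bounds are automatic. Hausdorff closeness $d_H(\spt|V_j|,M)\to 0$ follows from monotonicity plus density $\ge\eta$ (no mass can accumulate away from $M$) together with the weak-convergence lower bound $\liminf|V_j|(B_r(x))\ge\hau^m(M\cap B_r(x))>0$ for each $x\in M$. Surjectivity $\pi(\spt|V_j|)=M$ holds because $\pi(\spt|V_j|)$ is compact, hence closed: any missed open $U\subseteq M$ would give $|V_j|(\pi^{-1}(U))=0$, contradicting $\liminf|V_j|(\pi^{-1}(U))\ge\hau^m(U)>0$. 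For the single-fiber condition on $M\setminus E_j$ with $\hau^m(E_j)\to 0$, narrow varifold convergence to $\underline M$ yields both tangent-plane control (the integrated tilt of $\spt|V_j|$ relative to $TM$ tends to zero) and density-ratio convergence to $1$ at $\hau^m$-a.e.\ $x\in M$; an Allard-type $\epsilon$-regularity result then produces a $C^{1,\alpha}$ graphical representation of $V_j$ over some set $G_j\subseteq M$ with $\hau^m(M\setminus G_j)\to 0$, and we take $E_j:=M\setminus G_j$.

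\textbf{Conclusion.} Theorem \ref{thm.core.main} now gives $\spt|V_j|=M$ for $j$ large. Since $|S_j|\le|V_j|$, we have $\spt S_j\subseteq M$; the constancy theorem, applied on each connected component, yields $S_j=\sum_\alpha k_j^\alpha\llbracket M_\alpha\rrbracket$ with $k_j^\alpha\in\Z$. Because $\fnorm(S_j-\llbracket M\rrbracket)\to 0$ and $\fnorm(\pm\llbracket M_\alpha\rrbracket)>0$ (as $\llbracket M_\alpha\rrbracket\ne 0$), we must have $k_j^\alpha=1$ for all $\alpha$ and all large $j$, so $S_j=\llbracket M\rrbracket$, contradicting $\fnorm(S_j-\llbracket M\rrbracket)>0$.

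\textbf{Main obstacle.} The delicate step is the single-fiber condition. It requires an Allard-type $\epsilon$-regularity statement valid for stationary rectifiable varifolds under a density lower bound $\eta$ (rather than integer density), together with a covering argument to assemble the local graphical representations into one defined on a subset of $M$ of near-full measure. Both ingredients -- tangent-plane control and density-ratio control -- are furnished by the narrow convergence $V_j\to\underline M$, but combining them uniformly in $j$ is the technical core of this step.
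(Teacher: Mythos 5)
Your overall strategy (contradiction, extract limits, verify the hypotheses of Theorem \ref{thm.core.main}, conclude) matches the paper's, and the limit extraction step is essentially identical. However, the way you verify the single-fiber condition has a genuine gap, and it is precisely the step you flag as delicate.

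You propose to get graphicality over a large set $G_j\subseteq M$ via ``an Allard-type $\epsilon$-regularity result valid for stationary rectifiable varifolds under a density lower bound $\eta$ (rather than integer density).'' No such theorem exists for general $\eta<1$: Allard's regularity requires density close to an integer (in practice, multiplicity near $1$), and a rectifiable stationary varifold with density merely bounded below by $\eta$ can in principle consist of several sheets whose densities sum to something near $1$, with nothing in small-tilt-excess regularity to rule this out. The paper explicitly points this out in the remark after the corollary: for $\eta=1$ one could indeed deduce everything from Allard, but the Ginzburg--Landau case needs the result for general $\eta$, which is the whole reason Theorem \ref{thm.core.main} is proved from scratch. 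So your proposed route would only work in the $\eta=1$ case and does not prove the corollary as stated.

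The paper instead obtains the single-fiber condition by a much more elementary argument that crucially exploits the integral cycle $S_j$, avoiding any regularity theory. After establishing Hausdorff convergence of $\spt|V_j|$ to $M$, the constancy theorem applied to the pushforward $\pi_*S_j$ (an integral $m$-cycle in $M$, hence a constant integer multiple of $\llbracket M\rrbracket$ on each component, converging weakly to $\llbracket M\rrbracket$) gives $\pi_*S_j=\llbracket M\rrbracket$ eventually. This forces every fiber $\spt|S_j|\cap\pi^{-1}(x)$ to be nonempty, and since $|S_j|\le|V_j|$ with $S_j$ integral, each fiber of $\spt|V_j|$ contains a point where $\Theta^m(|V_j|,\cdot)\ge 1$. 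The area formula (using that $\pi$ is $(1+Cd_j)$-Lipschitz on the shrinking tube) then gives
\begin{align*}
\mathcal{H}^m(M)+\eta\,\mathcal{H}^m(E_j)\le\mass(\pi_*V_j)\le(1+Cd_j)\mass(V_j)\to\mathcal{H}^m(M),
\end{align*}
where $E_j$ is the set of $x$ with multiple-point fibers, forcing $\mathcal{H}^m(E_j)\to 0$ directly. This simultaneously gives surjectivity of $\pi|_{\spt|V_j|}$ (your argument for this via $\liminf|V_j|(\pi^{-1}(U))\ge\mathcal{H}^m(U)$ has a quantifier issue, since the open set $U_j:=M\setminus\pi(\spt|V_j|)$ depends on $j$) and the single-fiber estimate, without invoking any $\epsilon$-regularity. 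Your final contradiction step is fine and close in spirit to the paper's, which instead uses that $\spt|S_j|\not\subseteq M$ (else $S_j=\pi_*S_j=\llbracket M\rrbracket$).
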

	
	This final ingredient is the main novelty of the present paper. The main strategy to prove it is to parametrize $V$ as a multigraph over $M$
	and construct an approximate Jacobi field out of it, contradicting the assumption that $M$ is non-degenerate minimal.
	
	\begin{rmk}
	Actually, in the Allen--Cahn and Yang--Mills--Higgs settings, we just need to use \cref{thm.core} with $\eta=1$. In this case, this theorem can be easily deduced from Allard's classical regularity theorem \cite{Al}. However, in the Ginzburg--Landau setting, since the sharp density lower bound in \cref{asymp.flow} is not known, we need this result in its full strength.
	We expect that techniques similar to those used in \cite{PS2} allow to obtain $\eta=1$ also in this parabolic setting. However, our machinery does apply to any energy for which $\Gamma$-convergence and convergence of the gradient flows hold (with a positive density lower bound $\eta>0$ for the latter).
	\end{rmk}
	
%
%
	
	\addtocontents{toc}{\protect\setcounter{tocdepth}{0}}
	\subsection*{Acknowledgements.}
	The work of G.D.P is  partially supported by the NSF grant DMS 2055686 and by the Simons Foundation.  The authors would like to thank Fanghua Lin and Tristan Rivi\`ere   for their interest and for several inspiring  discussions.
	\addtocontents{toc}{\protect\setcounter{tocdepth}{2}}
	
	\section{Definition of the family $(M_w)$ and the perturbed mass functional}\label{sec.prel}
	
	Given a (smooth, embedded, oriented) non-degenerate minimal submanifold $M^m$ in $N^n$,
	we denote by $J$ the symmetric bilinear form representing the second variation of the area,
	defined on smooth sections of the normal bundle $T^\perp M$. Namely, given a smooth section $\varphi:M\to T^\perp M$,
	defining $M_\varphi$ to be the image of the map $x\mapsto\exp_x(\varphi(x))$ (from $M$ to $N$), we have
	\begin{align}\label{second.var.def}
		&\mathcal{H}^m(M_{s\varphi})=\mathcal{H}^m(M)+\frac{s^2}{2}J(\varphi,\varphi)+o(s^2)
	\end{align}
	(the linear part vanishes by minimality of $M$).
	We can write
	\begin{align*}
		&J(\varphi,\psi)=\int_M\ang{J\varphi,\psi}
	\end{align*}
	for a self-adjoint elliptic second-order operator (still denoted $J$), called the \emph{Jacobi operator} of $M$.
	The assumption that $M$ is non-degenerate means that $J$ has trivial kernel; for generic metrics $g$ on the ambient $N$, this assumption is always met for all minimal submanifolds \cite{White2}.
	
	Let $\lambda_1\le\lambda_2\le\dots$ be the eigenvalues of $J$, listed with multiplicity,
	and select an orthonormal basis $(\varphi_j)_{j=1}^\infty$ of corresponding eigenvectors.

	\begin{definition}
		We fix $\lambda>0$ such that $\mz\lambda_1+\lambda>0$.
		Also, we let $\ell$ be the maximum index such that $\lambda_\ell<0$.
		This finite number $\ell\in\N$ is called the \emph{Morse index} of $M$ (possibly $\ell=0$).
	\end{definition}
	
	We fix $\rho_0>0$ small, such that the exponential map
	\begin{align*}
		&\exp:\{v\in T^\perp M:|v|\le\rho_0\}\to \bar B_{\rho_0}(M)
	\end{align*}
	is a diffeomorphism ($\bar B_{\rho_0}(M)$ is the tubular neighborhood consisting of all the points having distance at most $\rho_0$ from $M$).
	We call $\pi:\bar B_{\rho_0}(M)\to M$ the nearest point projection and $d_M$ the distance function to $M$. For $y\in \bar B_{\rho_0}(M)$ we have
	\begin{align*}
		&y=\exp_x(v)
	\end{align*}
	for $x=\pi(y)$ and a suitable $v\in T_x^\perp M$ with $|v|=d_M(y)$ ($x\in M$ and $v\in T_x^\perp M$ are unique, under the constraint $|v|\le\rho_0$).
	
	We extend each $\varphi_j$ to a section $\varphi_j:\bar B_{\rho_0}(M)\to TN$ via parallel transport along geodesic rays orthogonal to $M$. Then we let
	\begin{align*}
		&\omega_j:=\ang{X,\varphi_j}\,\pi^*\vol_M,
	\end{align*}
	where $\vol_M$ is the volume form of $M$ and $X:=\operatorname{grad}\frac{d_M^2}{2}$.
	Finally, we extend each $\omega_j$ to a smooth $m$-form on all of $N$, in an arbitrary way.
	
	\begin{definition}
		For any $m$-current $T$ we define
		\begin{align}\label{P.def}
			&P(T):=(\ang{T,\omega^1},\dots,\ang{T,\omega^\ell}).
		\end{align}
		The modified mass functional is then
		\begin{align}\label{mod.mass.def}
			&\tilde\mass(T):=\mass(T)+\lambda|P(T)|^2.
		\end{align}
	\end{definition}
	
	Note that if $y=\exp_x(v)$, for some $v\in T_x^\perp M$ with norm $|v|\le\rho_0$, then $\ang{X,\varphi_i}(y)=\ang{v,\varphi_i(x)}$; hence,
	\begin{align}\label{P.graph}
		&P(\llbracket M_\varphi\rrbracket )=\Big(\int_M\ang{\varphi,\varphi_j}\Big)_{j=1}^\ell.
	\end{align}
	It is straightforward to check that \cref{second.var.def} can be improved to
	\begin{align}\label{second.var.bd}
		&\mathcal{H}^m(M_\varphi)
		=\mathcal{H}^m(M)+\mz J(\varphi,\varphi)+O(\|\varphi\|_{C^1}\|\varphi\|_{W^{1,2}}^2)
	\end{align}
	for a section $\varphi$ with $|\varphi|\le\rho_0$,
	where the two norms are the $C^0$-norm and the $L^2$-norm of $(\varphi,\nabla^\perp\varphi)$, respectively.
	From \cref{P.graph}, \cref{second.var.bd} and our choice of $\lambda$, it follows that
	\begin{align}\label{second.var.bd.cons}
		&\tilde\mass(\llbracket M_\varphi\rrbracket )>\mathcal{H}^m(M)
	\end{align}
	whenever $\varphi$ is nontrivial and small enough in the $C^1$-norm. The content of \cref{thm.core2} is that
	this inequality still holds for cycles close to $M$ in a much weaker sense, for the same choice of $\lambda$.
	
	\begin{definition}
		Given $w\in\R^\ell$ small enough, we let $\varphi_w:=\sum_{j=1}^\ell w_j\varphi_j$ and
		\begin{align*}
			&M_w:=M_{\varphi_w}.
		\end{align*}
		We fix a \emph{closed} ball $\bp$ in $\R^\ell$, centered at the origin, such that
		$|\sum_{i=1}^\ell w_i\varphi_i|<\rho_0$ for all $w\in\bp$ (in particular, $M_w$ is a smooth embedded submanifold since by elliptic regularity each $\varphi_j$ is smooth)
		and
		\begin{align}\label{smaller.area}
			&\mathcal{H}^m(M_w)<\mathcal{H}^m(M)\quad\text{for all }w\in\bp\setminus\{0\},
		\end{align}
		as well as
		\begin{align}\label{M.w.flat}
			&\fnorm(\llbracket M_w\rrbracket -\llbracket M\rrbracket )\le\mz\delta_1,
		\end{align}
		where $\delta_1$ is the constant from \cref{thm.core2}.
	\end{definition}
	
	Note that, in the previous definition,  property \cref{smaller.area} can be guaranteed using the fact that each $\varphi_j$ corresponds to a negative eigenvalue for the second variation of the area.
	
	In the remainder of this section, we outline a proof of the following proposition. For a single submanifold, it is a special case of the existence of recovery sequences
	in the $\Gamma$-convergence of our energies to the area. Some care is required in order to obtain a family which depends continuously on $w$.
	
	\begin{proposition}\label{prop.constr}
		Under the assumptions of \cref{thm.main},
		for $\epsilon$ small enough there exists a continuous family $(u^{\epsilon,w}_0)_{w\in\bp}$ such that
		\begin{align}\label{constr.bds}
		&E_\epsilon(u^{\epsilon,w}_0)
		\le \mathcal{H}^m(M_w)+\delta(\epsilon),
		\quad
		\fnorm(J(u^{\epsilon,w}_0)-\llbracket M_w\rrbracket )
		\le \delta(\epsilon),
		\end{align}
		for a quantity $\delta(\epsilon)$ which is infinitesimal as $\epsilon\to 0$.
	\end{proposition}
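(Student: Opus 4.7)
The plan is to adapt, for each $w\in\bp$, the standard recovery sequence construction underlying the $\Gamma$-convergence of each of the three energies to the area, applied to the submanifold $M_w$, and then to verify that the construction can be made continuous in $w$. Since $\varphi_w=\sum_j w_j\varphi_j$ is linear in $w$, the submanifold $M_w$ and all associated geometric data on a uniform tubular neighborhood (the projection $\pi_w$, signed distance, normal exponential chart, induced volume form) depend smoothly on $w\in\bp$; this smooth dependence is what will ultimately give the required continuity.

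For the Allen--Cahn energy, the open set $\Omega$ bounded by $M$ deforms smoothly to an open set $\Omega_w$ with $\partial\Omega_w=M_w$. Let $d_w^{\mathrm{sgn}}$ denote the signed distance to $M_w$, positive in $\Omega_w$, and let $q(t)=\tanh(t/\sqrt{2})$ be the canonical one-dimensional profile. I would set $u^{\epsilon,w}_0:=q(d_w^{\mathrm{sgn}}/\epsilon)$ in a fixed tubular neighborhood of $M_w$ and extend it by $\pm 1$ outside, with a suitable smooth cut-off; the standard Modica--Mortola computation, applied to the smoothly varying family $M_w$, yields both bounds in \cref{constr.bds} uniformly in $w\in\bp$. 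For Ginzburg--Landau, I would fix an oriented hypersurface $\Sigma$ with $\partial\Sigma=M$ and perturb it continuously to $\Sigma_w$ with $\partial\Sigma_w=M_w$ (e.g.\ by pushing the boundary along $\varphi_w$ and extending via a smooth isotopy). I would then choose a phase function $\theta_w:N\setminus\Sigma_w\to\R/2\pi\Z$, smooth away from $M_w$, with a $2\pi$-jump across $\Sigma_w$ and the model vortex behaviour in normal coordinates to $M_w$; this can be done continuously in $w$ because $\Sigma_w$ varies continuously. Setting $u^{\epsilon,w}_0(x):=f(d_{M_w}(x)/\epsilon)\,e^{i\theta_w(x)}$ for a suitable radial profile $f$, the bounds \cref{constr.bds} follow from the standard recovery-sequence estimates in \cite{JSoner,ABO}. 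For Yang--Mills--Higgs, we fix a Hermitian line bundle $L$ with $c_1(L)$ Poincar\'e dual to $[M]=[M_w]$ (equality since $M_w$ is isotopic to $M$), and construct $(u^{\epsilon,w}_0,\nabla^{\epsilon,w}_0)$ as approximate vortex configurations centered on $M_w$ in coherent local trivializations of $L$, following the construction in \cite{PPS1}.

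The main technical obstacle is the coherent, continuous-in-$w$ choice of the auxiliary data: the perturbed open set $\Omega_w$ in Allen--Cahn, the bounding hypersurface $\Sigma_w$ and phase $\theta_w$ in Ginzburg--Landau, and the section/gauge choices in Yang--Mills--Higgs. Since $\bp$ is a fixed compact ball, for $\epsilon$ small enough the whole construction is a smooth geometric perturbation of the base case $w=0$, and local choices can be glued via a partition of unity subordinate to a fixed finite cover of $M$ (transported to $M_w$ via $\exp\circ\varphi_w$). The pointwise estimates are uniform in $w\in\bp$ by compactness and by the smooth dependence of $M_w$ on $w$, yielding the required $\delta(\epsilon)\to 0$ bound.
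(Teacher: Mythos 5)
Your strategy --- build the recovery sequence directly for each $M_w$ from the $M_w$-adapted data (signed distance, bounding hypersurface, vortex in $T^\perp M_w$) and argue continuity from smooth dependence on $w$ --- differs structurally from the paper's. The paper constructs a single profile $u^{\epsilon,0}_0$ adapted to $M$, together with an extra estimate that the ``horizontal'' part of its gradient is negligible, and then \emph{transports} it to $M_w$ by composing with a smooth family of diffeomorphisms $\Phi_w$ carrying $M$ to $M_w$ and normal planes to normal planes isometrically: $u^{\epsilon,w}_0:=u^{\epsilon,0}_0\circ\Phi_w^{-1}$ (for Yang--Mills--Higgs, both section and connection are pulled back through $\Phi_w^{-1}$ and the pullback bundle is identified with $L$ by parallel transport along $t\mapsto\Phi_{tw}$). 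This makes continuity in $w$ automatic; the cost is the horizontal-gradient bookkeeping needed to show that composition with $\Phi_w^{-1}$ only perturbs the energy by a vanishing amount. Your direct construction gives cleaner pointwise control (the profile is literally one-dimensional in $M_w$-normal coordinates) but shifts the entire burden onto the continuity argument, which you correctly flag as the ``main technical obstacle'' without resolving it.

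In the Allen--Cahn and Ginzburg--Landau cases your sketch is plausible, though you should spell out how $\Sigma_w$ and the phase $\theta_w$ are chosen continuously; the most natural way is to push $\Sigma$ and $\theta$ forward under the same diffeomorphism family, which is essentially the paper's transport step in disguise. The genuine gap is in the Yang--Mills--Higgs case. There the ansatz requires local trivializations adapted to $T^\perp M_w$, glued gauge-compatibly on overlaps, for each $w$ simultaneously, and a partition-of-unity gluing of section data alone does not produce a well-defined connection nor guarantee continuity in $w$. The paper circumvents all of this at once by the observation $(\Phi_w^{-1})^*L\cong L$ (via parallel transport along the isotopy), which transports the \emph{couple} coherently. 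As written, the Yang--Mills--Higgs branch of your proposal stops at the statement of the difficulty rather than at its resolution; either adopt the transport construction, or give an explicit continuous choice of normal-bundle frames and transition gauges for the whole family $(M_w)_{w\in\bp}$.
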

	
	In the statement, continuity refers to the smooth topology on maps (or couples);
	also, $u^{\epsilon,w}_0$ should be replaced with a couple $(u^{\epsilon,w}_0,\nabla^{\epsilon,w}_0)$ in the Yang--Mills--Higgs setting.
	
	\begin{rmk}\label{gamma.rmk}
		Actually, the maps (or couples) satisfy $\mu(u^{\epsilon,w}_0)\weakto\mathcal{H}^m\mrestr M_w$ for every $w\in\bp$.
		This follows from the proof and can also be deduced from the $\Gamma$-convergence statement (\cref{gamma.conv},
		where $S=\llbracket M_w\rrbracket $ and where the inequality $|S|\le\mu$ must be an equality, as $S$ and $\mu$ have the same total mass).
	\end{rmk}
	
	\begin{proof}[Proof of \cref{prop.constr}]
		Given a $k$-form $\omega$ and $y\in\bar B_{\rho_0}(M)$, let
		\begin{align*}
			&|\omega|_h^2(y):=\sum_{i=1}^{m}|\iota_{e_i}\omega|^2,
		\end{align*}
		where $\{e_i\}_{i=1}^{m}$ is an orthonormal basis of $(\operatorname{ker}d\pi(y))^\perp$ and \(\iota_{e_i}\omega\) is the interior product between the \(k\)-form and the vector \(e_i\).
		Thus, roughly speaking, $|\omega|_h(y)$ measures the components of $\omega$ which are not perpendicular to $M$.
		
		We claim that it is enough to find maps $u^{\epsilon,0}_0$ (or couples $(u^{\epsilon,0}_0,\nabla^{\epsilon,0}_0)$)
		such that the statement holds for $w=0$ and such that
		\begin{itemize}
			\item $\int_{B_{\rho_0}(M)}|du^{\epsilon,0}_0|_h^2=o(\epsilon^{-1})$ for Allen--Cahn;
			\item $\int_{B_{\rho_0}(M)}|du^{\epsilon,0}_0|_h^2=o(\log(\epsilon^{-1}))$ for Ginzburg--Landau;
			\item $\int_{B_{\rho_0}(M)}(|\nabla^{\epsilon,0}_0 u^{\epsilon,0}_0|_h^2+\epsilon^2|F_{\nabla^{\epsilon,0}_0}|_h^2)=o(1)$ for Yang--Mills--Higgs.
		\end{itemize}
		Once this is done, we can take a family of diffeomorphisms $\Phi_w:N\to N$ (depending smoothly on $w\in\bp$) such that $\Phi_0=\operatorname{id}$, $\Phi_w(M)=M_w$ and
		$d\Phi_w(x)$ maps the normal space $T^\perp_x M$ to the normal space $T^\perp_{\Phi_w(x)} M_w$ isometrically, for all $x\in M$. Then we can simply define
		$u^{\epsilon,w}_0:=u^{\epsilon,0}_0\circ\Phi_w^{-1}$ for Allen--Cahn or Ginzburg--Landau.
		Our assumption on the differential of $\Phi_w$ gives
		\begin{align}\label{par.dist}
			&|du^{\epsilon,w}_0|(\Phi_w(y))
			=|du^{\epsilon,0}_0|(y)+O(|du^{\epsilon,0}_0|_h(y))+O(d_M(y)|du^{\epsilon,0}_0|(y))
		\end{align}
		for all $y\in N$ close to $M$. Also, by \cref{gamma.conv} (or from the proof below), the energy densities
		\begin{align*}
			&\mu(u^{\epsilon,0}_0)\weakto\mathcal{H}^m\mrestr M
		\end{align*}
		as $\epsilon\to 0$.
		Hence, when bounding the energy of $u^{\epsilon,w}_0$, we can neglect the complement of $B_{\rho_0}(M)$ and, together with the additional requirement for $u^{\epsilon,0}_0$
		mentioned above, \cref{par.dist} (with a change of variables) gives the convergence
		\begin{align*}
			&\mu(u^{\epsilon,w}_0)\weakto(\Phi_w)_*(J\Phi_w\,\mathcal{H}^m\mrestr M),
		\end{align*}
		where $J\Phi_w$ is the Jacobian of $\Phi_w$. Since the latter equals $J(\Phi_w|_M)$ on $M$, we get
		\begin{align*}
			&\mu(u^{\epsilon,w}_0)\weakto\mathcal{H}^m\mrestr M_w
		\end{align*}
		as $\epsilon\to 0$ (uniformly in $w\in\bp$). Since $J(u^{\epsilon,w}_0)=(\Phi_w)_*J(u^{\epsilon,0}_0)$, we also have
		$J(u^{\epsilon,w}_0)\weakto(\Phi_w)_*\llbracket M\rrbracket =\llbracket M_w\rrbracket $ in the flat norm (uniformly in $w\in\bp$).
		
		For Yang--Mills--Higgs, we observe that the pullback bundle $(\Phi_w^{-1})^*L$ is isomorphic to $L$, with an isomorphism preserving the Hermitian metric:
		one can identify the fiber $L_y$ with the fiber $L_{\Phi_w^{-1}(y)}$ by parallel transport along the curve $t\mapsto\Phi_{tw}^{-1}(y)$.
		Thus, we can let
		\begin{align*}
			&u^{\epsilon,w}_0:=(\Phi_w^{-1})^*u^{\epsilon,0}_0,\quad \nabla^{\epsilon,w}_0:=(\Phi_w^{-1})^*\nabla^{\epsilon,0}_0
		\end{align*}
		and view $(u^{\epsilon,w}_0,\nabla^{\epsilon,w}_0)$ as a couple on $L$, rather than on the pullback bundle. This couple (on $L$) depends continuously on $w$.
		Since $\nabla^{\epsilon,w}_0u^{\epsilon,w}_0$ and the curvature of $\nabla^{\epsilon,w}_0$ are the pullback of the same differential forms for $w=0$ (under the map $\Phi_w^{-1}$), the proof can be concluded as above.
		
		We are left to construct the maps (or couples) for $w=0$. We split the proof into three cases, according to the energy under consideration.
		
		\textbf{Allen--Cahn.} By assumption, we can split $N\setminus M$ into two regions $N_+$ and $N_-$, with common boundary $M$ (having the boundary orientation from $N_-$).
		Also, we can identify
		\begin{align*}
			&B_{\rho_0}(M)\cong M\times B_{\rho_0}^1
		\end{align*}
		(requiring that $B_{\rho_0}(M)\cap N_+$ and $B_{\rho_0}(M)\cap N_-$ are identified with $M\times (0,\rho_0)$ and $M\times (-\rho_0,0)$, respectively).
		
		Recall that the one-dimensional solution to the Allen--Cahn equation is
		\begin{align*}
			&u^\epsilon(v)=\tanh\Big(\frac{v}{\sqrt{2}\epsilon}\Big),
		\end{align*}
		with total energy $E_\epsilon(u^\epsilon)=1$.
		We fix an even cut-off function $\chi\in C^\infty_c(B_{\rho_0}^1)$ such that $0\le\chi\le 1$ and $\chi=1$ on $B_{\rho_0/2}^1$, and define
		\begin{align*}
			&\tilde u^\epsilon(v):=\chi(v)\tanh\Big(\frac{v}{\sqrt{2}\epsilon}\Big)+(1-\chi(v))\sgn(v),
		\end{align*}
		extended to a smooth function on $\R$ (equal to $\sgn(v)$ on the complement of $B_{\rho_0}^1$, where $\sgn(v)$ is the sign function).
		Then we let
		\begin{align*}
			&u^{\epsilon,0}_0(x,v):=\tilde u^\epsilon(v)
		\end{align*}
		for $(x,v)\in M\times B_{\rho_0}^1\cong B_{\rho_0}(M)$.
		
		Note that, for any $r>0$, we have
		\begin{align*}
			&\int_{\R\setminus B_r^1}\Big(\frac{\epsilon|du^\epsilon|^2}{2}+\frac{(1-(u^\epsilon)^2)^2}{4\epsilon}\Big)
			=\int_{\R\setminus B_{r/\epsilon}^1}\Big(\frac{|du^1|^2}{2}+\frac{(1-(u^1)^2)^2}{4}\Big)\to 0
		\end{align*}
		as $\epsilon\to 0$ and that the same conclusion holds for $\tilde u^\epsilon$, since the integrand vanishes on $\R\setminus B_{\rho_0}^1$
		and, on the region $B_{\rho_0}^1\setminus B_{\rho_0/2}^1$, we have
		\begin{align*}
			&|d\tilde u^\epsilon|^2\le 2|du^\epsilon|^2+O(1),\quad 0\le 1-(\tilde u^\epsilon)^2\le 1-(u^\epsilon)^2
		\end{align*}
		(and $\tilde u^\epsilon=u^\epsilon$ on $B_{\rho_0/2}^1$). Thus, for both $u^\epsilon$ and $\tilde u^\epsilon$, the energy density on the real line converges to the Dirac mass $\delta_0$. Since the metric on $M\times B_{r}^1$ is closer and closer to the product metric as $r\to 0$, we deduce that
		\begin{align*}
			&E_\epsilon(u^{\epsilon,0}_0)\to\area.
		\end{align*}
		It is straightforward to check that $J(u^{\epsilon,0}_0)\weakto\llbracket M\rrbracket $ in the flat norm.
		
		\textbf{Ginzburg--Landau.} Recall that we are assuming that $M$ bounds an oriented hypersurface $P$.
		The hypersurface $P$ is two-sided, as $P$ and $N$ are oriented.
		Similarly, since $M$ and $N$ are oriented, the normal bundle $T^\perp M$ is also oriented; $TP|_M$ provides a unit section of it, hence $T^\perp M$ is trivial.
		
		We can then identify $T^\perp M$ with $M\times\R^2$, in such a way that the vector $(x,1)$ belongs to $T_xP$ for all $x\in M$.
		Via the exponential map from $T^\perp M$ to $N$, we also identify $\bar B_{\rho_0}(M)\cong M\times\bar B_{\rho_0}^2$.
		
		Up to modifying $P$ and shrinking $\rho_0$, we can assume that
		\begin{align*}
			&P\cap B_{\rho_0}(M)=M\times([0,\rho_0)\times\{0\})
		\end{align*}
		under this identification. Since the vector field $\nabla d_M$ is tangent to the second factor of $M\times\bar B_{\rho_0}^2$, it can be viewed as a map from $\bar B_{\rho_0}(M)\setminus M$ to $S^1$: specifically, $\nabla d_M(x,v)=\frac{v}{|v|}$
		for $x\in M$ and $v\in\bar B_{\rho_0}^2\setminus\{0\}$.
		
		We extend it to a continuous map $w:N\setminus M\to S^1$:
		it suffices to write $\nabla d_M=e^{i\theta}$ on $\bar B_{\rho_0}(M)\setminus P$, with $0\le\theta\le 2\pi$,
		and extend $\theta$ to a continuous map on $N\setminus P$, such that $\theta\to 0$ and $\theta\to 2\pi$ when approaching $P$ from its two different sides (note that this already holds in $\bar B_{\rho_0}(M)$, since $M$ has the boundary orientation from $P$). We then choose $w:=e^{i\theta}$.
		
		Up to a regularization, we now assume that we have a smooth map $w:N\setminus M\to S^1$ such that $(d_M w)(x,v)=|v|w(x,v)=v$ for $x\in M$ and $v\in\R^2$ small enough.
		Define the Lipschitz map
		\begin{align*}
			&u^{\epsilon,0}_0:=\min\{d_M/\epsilon,1\}w
		\end{align*}
		(defined to be zero on $M$)
		and note that $|u^{\epsilon,0}_0|=1$ outside the neighborhood $B_\epsilon(M)$, which has volume $O(\epsilon^2)$. Also, $d_M w$ is smooth near $M$. Hence,
		\begin{align*}
			&\int_N\frac{(1-|u^{\epsilon,0}_0|^2)^2}{4\epsilon^2}+\int_{B_\epsilon(M)}\frac{|du^{\epsilon,0}_0|^2}{2}\le C
		\end{align*}
		for some constant $C$ independent of $\epsilon\le\rho_0$. Moreover, $|du^{\epsilon,0}_0|=d_M^{-1}(1+O(d_M))$ outside $B_\epsilon(M)$ (as $w(x,v)=\frac{v}{|v|}$ for $v$ small).
		This implies that
		\begin{align*}
			&\int_{\de B_r(M)}\frac{|du^{\epsilon,0}_0|^2}{2}
			=\mathcal{H}^{n-1}(\de B_r(M))\frac{1+O(r)}{2r^2}
			=\frac{\pi}{r}\area+O(1)
		\end{align*}
		for $\epsilon<r<\rho_0$,
		hence $E_\epsilon(u^{\epsilon,0}_0)\to\area$. The convergence of the Jacobians is again immediate, since $J(u^{\epsilon,0}_0)=0$ outside $B_\epsilon(M)$ and $J(u^{\epsilon,0}_0)=\frac{dv^1\wedge dv^2}{\pi\epsilon^2}$ inside. One can then obtain smooth maps by regularization.
		
		\textbf{Yang--Mills--Higgs.} Let $L'\to N$ be the line bundle obtained as follows. Recall that $\pi$ denotes the nearest point projection
		to $M$ (defined on $\bar B_{\rho_0}(M)$) and note that the pullback $\pi^* (T^\perp M)$ is a (real) oriented plane bundle on $\bar B_{\rho_0}(M)$, hence can be viewed as a complex line bundle here. On $\bar B_{\rho_0}(M)\setminus M$ we have a unit section $w$ given by $w(\exp_x(v)):=\frac{v}{|v|}$.
		
		We define $L'$ by gluing $\pi^* (T^\perp M)$ with the trivial line bundle on $N\setminus B_{\rho_0}(M)$,
		identifying $w$ with the section $1$ on $\de B_{\rho_0}(M)$. We extend $w$ to a unit section of $L'$ on $N\setminus M$ by setting $w:=1$ outside $B_{\rho_0}(M)$.
		
		Since $M$ is the zero set of a section, the Euler class $c_1(L')$ is Poincar\'e dual to $\llbracket M\rrbracket $
		(even taking into account orientations). Hence, by assumption, $c_1(L')=c_1(L)$. This implies that $L$ and $L'$ are isomorphic (see \cite[Proposition~4.1]{PPS1})
		and we can just construct the desired couple on $L'$.
		
		
		For any $0<\epsilon\le\rho_0$, we let $(u^\epsilon,d-i\alpha^\epsilon)$ be the (unique) critical couple for the trivial bundle on the plane with degree $1$ at the origin
		and $u^\epsilon/|u^\epsilon|=e^{i\theta}$ (on $\C\setminus\{0\}$).
		
		This couple actually solves the first order vortex equations \cite[III.(1.7)]{JT} and, as seen by integration by parts (and the decay mentioned below),		
		has total energy $E_\epsilon(u^\epsilon,\nabla^\epsilon=d-i\alpha^\epsilon)=1$ (see \cite[III.(1.5)]{JT}).
		We refer the reader to \cite[Theorem~III.2.3]{JT} and the subsequent proof for the construction of this couple when $\epsilon=1$; for the other values of $\epsilon$, we can just take $u^\epsilon:=u^1(\epsilon^{-1}\cdot)$ and $\alpha^\epsilon:=\epsilon^{-1}\alpha^1(\epsilon^{-1}\cdot)$.
		
		For these couples we have $|u^\epsilon|\le 1$ and the following exponential decay (see \cite[Theorem~III.8.1]{JT}; see also the proof of \cite[Corollary~5.4]{PS}):
		\begin{align}\label{exp.decay}
			&|\nabla^\epsilon u^\epsilon|+\frac{1-|u^\epsilon|^2}{\epsilon}+\epsilon|d\alpha^\epsilon|
			\le\frac{C}{\epsilon}e^{-c|z|/\epsilon},
		\end{align}
		where $c,C$ are absolute constants and $|z|$ is the distance from the origin.
		
		We modify this couple in such a way that it has zero energy outside $B_{\rho_0}^2$ (as we did for Allen--Cahn):
		let $\chi:\R^2\to\R$ be a radial smooth function such that $0\le\chi\le 1$, $\chi=1$ on $B_{\rho_0/2}^2$ and $\chi=0$ outside $B_{\rho_0}^2$.
		Then let
		\begin{align*}
			&\tilde u^\epsilon:=\chi u^\epsilon+(1-\chi)e^{i\theta},\quad\tilde\alpha^\epsilon:=\chi\alpha^\epsilon+(1-\chi)\,d\theta
		\end{align*}
		(and $\tilde\nabla^\epsilon:=d-i\tilde\alpha^\epsilon$).
		Outside $B_{\rho_0}^2$ we have $\tilde u^\epsilon=e^{i\theta}$ and $\tilde\nabla^\epsilon=d-i\,d\theta$; hence, the couple $(\tilde u^\epsilon,\tilde\nabla^\epsilon)$ has zero energy here.
		
		Since $u^\epsilon/|u^\epsilon|=e^{i\theta}$, we have $|\tilde u^\epsilon|=\chi|u^\epsilon|+(1-\chi)$, hence $0\le 1-|\tilde u^\epsilon|^2\le 1-|u^\epsilon|^2$. Also, we have
		\begin{align*}
			&\tilde\nabla^\epsilon=\nabla^\epsilon-i(\chi-1)(\alpha^\epsilon-d\theta)=(d-i\,d\theta)-i\chi(\alpha^\epsilon-d\theta),
		\end{align*}
		from which it follows that
		\begin{align*}
			\tilde\nabla^\epsilon\tilde u^\epsilon
			&=(u^\epsilon-e^{i\theta})\,d\chi+\chi\tilde\nabla^\epsilon u^\epsilon+(1-\chi)\tilde\nabla^\epsilon e^{i\theta} \\
			&=(|u^\epsilon|-1)e^{i\theta}\,d\chi+\chi(\nabla^\epsilon u^\epsilon-i(\chi-1)u^\epsilon(\alpha^\epsilon-d\theta)) \\
			&\quad+(1-\chi)(-i\chi e^{i\theta}(\alpha^\epsilon-d\theta))
		\end{align*}
		and
		\begin{align*}
			&d\tilde\alpha^\epsilon
			=d\chi\wedge(\alpha^\epsilon-d\theta)+\chi\,d\alpha^\epsilon.
		\end{align*}
		
		Using the exponential decay \cref{exp.decay} and the inequality
		\begin{align}\label{angular}
			&|\alpha^\epsilon-d\theta|\le|u^\epsilon|^{-1}|\nabla^\epsilon u^\epsilon|,
		\end{align}
		it is easy to deduce that the energy density of $(\tilde u^\epsilon,\tilde\nabla^\epsilon)$ concentrates to the Dirac mass $\delta_0$ as $\epsilon\to 0$.
		
		Using a local orthonormal frame for the normal bundle $T^\perp M$, defined on an open subset $U\subseteq M$, we can identify $\pi^{-1}(U)$
		with $U\times \bar B_{\rho_0}^2$ and $L'$ with $\R^2$ (recall that if $\pi(y)=x$ then $L_y'=T_x^\perp M\cong\R^2$). Here we define
		the desired couple by pullback under the projection $U\times\bar B_{\rho_0}^2\to\bar B_{\rho_0}^2$; namely,
		\begin{align*}
			&u^{\epsilon,0}_0(x,v):=\tilde u^\epsilon(v),\quad \nabla^{\epsilon,0}_0(x,v)=d-i\alpha^{\epsilon,0}_0(x,v):=d-i\tilde\alpha^\epsilon(v)
		\end{align*}
		for $x\in U$ and $v\in\bar B_{\rho_0}^2$ (with $\alpha^{\epsilon,0}_0$ vanishing along vectors parallel to the factor $U$).
		The definition does not depend on the choice of the orthonormal frame, since the couple $(\tilde u^\epsilon,\tilde\alpha^\epsilon)$ is equivariant under rotations (indeed, this holds for the initial couple $(u^\epsilon,\nabla^\epsilon)$ by uniqueness).
		We then let $u^{\epsilon,0}_0:=w$ and $\nabla^{\epsilon,0}_0:=\nabla_w$ outside $\bar B_{\rho_0}(M)$, where $\nabla_w$ is the unique connection (defined on $N\setminus M$) making the unit section $w$ parallel. The resulting couple is smooth (even across $\de B_{\rho_0}(M)$).
		
		Note that the energy of this couple vanishes outside $B_{\rho_0}(M)$. It is easy to check that $E_\epsilon(u^{\epsilon,0}_0,\nabla^{\epsilon,0}_0)\to\area$,
		while the convergence of the Jacobian to $\llbracket M\rrbracket $ can be deduced as follows:
		from \cref{exp.decay} we have $|u^{\epsilon,0}_0|\to 1$ and thus $u^{\epsilon,0}_0\to w$ pointwise (on $N\setminus M$); actually, this also holds in $W^{1,p}(N)$, for all $1\le p<2$,
		since $u^{\epsilon,0}_0=|u^{\epsilon,0}_0|w$ and thus, with respect to a fixed reference connection $\nabla_0$ agreeing with $\nabla_w$ on $N\setminus B_{\rho_0}(M)$,
		\begin{align*}
			&|\nabla_0 (u^{\epsilon,0}_0-w)|
			=|\nabla_0(\chi(u^{\epsilon}-e^{i\theta}))|
			\le|d\chi|\cdot||u^{\epsilon}|-1|+\chi|d|u^{\epsilon}||+\chi||u^{\epsilon}|-1|\cdot|d\theta|.
		\end{align*}
		which is bounded in $L^p(N)$. Similarly, since $|u^{\epsilon}|$ is bounded below by a positive constant on $\R^2\setminus B_\epsilon^2$,
		using \cref{angular} we obtain that
		\begin{align*}
			&\int_{N\setminus B_\epsilon(M)}|\nabla^{\epsilon,0}_0-\nabla_0|^p\le C+C\int_{\R^2\setminus B_\epsilon^2}|\tilde\alpha^\epsilon-d\theta|^p\le C;
		\end{align*}
		using the fact that $\alpha^\epsilon=\epsilon^{-1}\alpha^1(\epsilon^{-1}\cdot)$, we also get
		\begin{align*}
			&\int_{B_\epsilon^2}|\alpha^\epsilon|^p
			=\epsilon^{2-p}\int_{B_1^2}|\alpha^1|^p,
		\end{align*}
		which implies that $\nabla^{\epsilon,0}_0-\nabla_0$ is bounded in $L^p(N)$.
		In particular, $\ang{\nabla_0 u^{\epsilon,0}_0,iu^{\epsilon,0}_0}\to\ang{\nabla_0 w,iw}$ in $L^1(N)$.
		Recalling \cref{jac.ymh}, we arrive at
		\begin{align*}
			&J(u^{\epsilon,0}_0,\nabla^{\epsilon,0}_0)\weakto\frac{1}{2\pi}d\ang{\nabla_0w,iw}+\frac{\omega_0}{2\pi}
		\end{align*}
		(in the flat norm). In a local trivialization, since $|w|=1$, the right-hand side is just $\frac{1}{2\pi}d\ang{dw,iw}$,
		which is ($\frac{1}{\pi}$ times) the usual distributional Jacobian of $w$; using the definition of $w$ near $M$, it is easy to check that the right-hand side equals $\llbracket M\rrbracket $.
	\end{proof}
	
	\section{Deducing the main result (\cref{thm.main}), given \cref{thm.crit}}
	
	Deducing \cref{thm.main} from \cref{thm.crit} is straightforward.
	Fixing $\epsilon>0$ small, for a sequence of times $T_k\to\infty$ we obtain
	solutions $(u^k_t)_{t\in[0,T_k]}$ (or $(u^k_t,\nabla^k_t)_{t\in[0,T_k]}$) to the gradient flow equation
	such that \cref{crit.bds} holds, for a quantity $\delta=\delta(\epsilon)$ such that $\delta(\epsilon)\to 0$ as $\epsilon\to 0$.
	
	In the Allen--Cahn setting, the gradient flow equation
	gives
	\begin{align*}
		&\int_0^{T_k}\int_N|\dot u^k_t|^2
		=(c\epsilon)^{-1}(E_\epsilon(u^k_0)-E_\epsilon(u^k_{T_k})),
	\end{align*}
	so there exists an intermediate time $t_k$ such that
	\begin{align*}
		&\int_N|\dot u^k_{t_k}|^2
		\le\frac{C}{T_k}
	\end{align*}
	for a constant $C$ independent of $k$.
	
	Recalling the gradient flow equation \cref{gf.ac}, setting $u^k:=u^k_{t_k}$ we see that
	\begin{align*}
		&\Delta u^k+\epsilon^{-2}(1-|u^k|^2)u^k\to 0\quad\text{in }L^2.
	\end{align*}
	Since $|u^k|\le 1$, this implies that $(u^k)$ is bounded in $W^{2,2}$. Hence, $u^k\to u^\infty$ in $W^{1,2}$ up to subsequences, and the limit $u^\infty$ is a critical map for $E_\epsilon$ (in particular, $u^\infty$ is smooth). Also, since $J(u^k)\weakto J(u^\infty)$ in the flat norm, we have
	\begin{align*}
		&\fnorm(J(u^\infty)-\llbracket M\rrbracket )
		=\limsup_{k\to\infty}\fnorm(J(u^k)-\llbracket M\rrbracket )
		\le\delta(\epsilon).
	\end{align*}
	Thus, once we set $u_\epsilon:=u^\infty$, we have
	\begin{align*}
		&J(u_\epsilon)\weakto\llbracket M\rrbracket 
	\end{align*}
	in the flat norm, as $\epsilon\to 0$. The $\Gamma$-convergence result (\cref{gamma.conv}) then implies that the energy densities converge (subsequentially) to a measure $\mu$
	with $\mu\ge\mathcal{H}^m\mrestr M$.
	Since
	\begin{align*}
		&E_\epsilon(u_\epsilon)
		=E_\epsilon(u^\infty)
		=\limsup_{k\to\infty}E_\epsilon(u^k)
		\le \limsup_{k\to\infty}E_\epsilon(u_0^k)
		\le\area+\delta(\epsilon),
	\end{align*}
	where the last inequality follows from \cref{prop.constr}, by letting \(\epsilon \to 0\) we deduce  $\mu(N)\le\area$, which forces $\mu=\mathcal{H}^m\mrestr M$.
	The proof for Ginzburg--Landau is identical.
	
	As for Yang--Mills--Higgs, we can find an intermediate time $t_k\in[0,T_k]$ such that, setting $u^k:=u^k_{t_k}$ and $\nabla^k:=\nabla^k_{t_k}$,
	\begin{align}\label{slice.l2.ymh}
	\begin{aligned}
		&-(\nabla^k)^*\nabla^k u^k+\frac{1}{2\epsilon^2}(1-|u^k|^2)u^k\to 0, \\
		&-d^*(\omega_0+d\alpha^k)+\frac{1}{\epsilon^2}\ang{iu^k,\nabla^ku^k}\to 0
	\end{aligned}
	\end{align}
	in $L^2$, where $\omega_0=iF_{\nabla_0}$, for the reference connection $\nabla_0$ (recall that $\nabla^k=\nabla_0-i\alpha^k$).
	Passing to the Coulomb gauge and shifting further the couple by a suitable harmonic $S^1$-valued function
	(see, e.g., \cite[Section~7.1]{PS}), we can replace the couple $(u^k,\alpha^k)$ with a new gauge-equivalent one (denoted in the same way) such that $d^*\alpha^k=0$ and the harmonic part in the Hodge decomposition of $\alpha^k$ is bounded.
	
	From the second part of \cref{slice.l2.ymh} and $\int_N|\nabla^ku^k|^2\le 2\pi E_\epsilon(u^k,\nabla^k)$, we get that
	the Hodge Laplacian $\Delta_H\alpha^k$ is bounded in $L^2$. As the harmonic part of $\alpha^k$ is bounded, standard elliptic estimates imply that $\alpha^k$ is bounded in $W^{2,2}$.
	
	Since $\nabla^ku^k=(\nabla_0-i\alpha^k)u^k$ is bounded in $L^2$, we deduce that $u^k$ is bounded in $W^{1,2}$.
	Hence, $\alpha^k$ and $u^k$ have subsequential weak limits $\alpha^\infty$ in $W^{2,2}$ and $u^\infty$ in $W^{1,2}$, respectively.
	In particular, $\nabla^ku^k\weakto\nabla^\infty u^\infty$ in $L^2$.
	
	Then \cref{slice.l2.ymh} implies that $(u^\infty,\nabla^\infty)$ is a weak solution to the Euler--Lagrange equations for Yang--Mills--Higgs;
	as shown in the appendix of \cite{PS}, the limit couple is smooth and critical for Yang--Mills--Higgs.
	In particular,
	$$\int_N\ang{\nabla^\infty u^\infty,\nabla^\infty(u^k-u^\infty)}=\frac{1}{2\epsilon^2}\int_N\ang{(1-|u^\infty|^2)u^\infty,u^k-u^\infty}\to 0,$$
	while from the first part of \cref{slice.l2.ymh} we get
	$$\int_N\ang{\nabla^k u^k,\nabla^k(u^k-u^\infty)}=\frac{1}{2\epsilon^2}\int_N\ang{(1-|u^k|^2)u^k,u^k-u^\infty}+o(1)\to 0.$$
	Subtracting and using the fact that $\nabla^k-\nabla^\infty\to 0$ in $L^2$, we obtain the strong convergence $\nabla^k u^k\to\nabla^\infty u^\infty$ in $L^2$.
	Hence, from \eqref{jac.ymh}, the Jacobian $J(u^k,\nabla^k)\weakto J(u^\infty,\nabla^\infty)$ in the flat norm. The conclusion follows as before.
	
	\section{Proof of \cref{thm.crit} assuming \cref{thm.core2} and \cref{thm.core}}
	
	In this section we deduce \cref{thm.crit} from \cref{thm.core2} and the rigidity result \cref{thm.core}, following the same strategy of \cite{JSte} and \cite{CJS}.
	
	With abuse of notation, we will write just the map (or section) $u$ even though the arguments cover also the Yang--Mills--Higgs setting
	(where $u$ should be replaced with the couple $(u,\nabla)$). Also, we will assume $\ell\ge 1$; when $\ell=0$, the argument is simpler (see \cref{ell.0} below).
	
	Let $\chi:\bp\to\R$ be a smooth nonnegative function such that $\chi=0$ on $\de\bp$ and $\chi=1$ on the smaller ball $\mz\bp$.
	Starting from the initial map $u^{\epsilon,w}_0$ provided by \cref{prop.constr},
	we let $u^{\epsilon,w}_t$ be the solution to the gradient flow equation at time $\chi(w)t$.
	Thus, for $w\in\mz\bp$, $(u^{\epsilon,w}_t)_{t\in[0,\infty)}$ is the gradient flow of $u^{\epsilon,w}_0$,
	while for $w\in\de\bp$ we have $u^{\epsilon,w}_t=u^{\epsilon,w}_0$.
	
	In the sequel, given a continuous function $F:\bp\to\R^\ell$ and a subset $\Omega\subset\bp$
	such that $F(w)\neq 0$ for all $w\in\de\Omega$ (the topological boundary of $\Omega$ in $\R^\ell$, rather than in $\bp$),
	we denote by
	\begin{align*}
		&\operatorname{deg}(F,\Omega)\in\Z
	\end{align*}
	the topological degree of $F|_\Omega$, with respect to the point $0\in\R^\ell$ in the codomain.
	
	Recalling \cref{P.def}, let
	\begin{align*}
		&f(w,t):=P(J(u^{\epsilon,w}_t))
	\end{align*}
	and note that, for $\epsilon$ small enough, by the second part of \cref{constr.bds} we have
	\begin{align*}
		&|f(w,t)-w|=|P(J(u^{\epsilon,w}_0)-\llbracket M_w\rrbracket )|<|w|
	\end{align*}
	for $w\in\de\bp$, where we used the fact that 
	\(
	P(\llbracket M_w\rrbracket )=w,
	\)
	which follows from \cref{P.graph}. Thus, $f(\cdot,t)$ is homotopic to the identity through maps (from $\bp$ to $\R^\ell$)
	which do not vanish on $\de\bp$; since the identity has degree $1$, we deduce that
	\begin{align*}
		&\operatorname{deg}(f(\cdot,t),\bp)=\operatorname{deg}(\operatorname{id},\bp)=1.
	\end{align*}
	Note that $\fnorm(J(u^{\epsilon,w}_0)-\llbracket M\rrbracket )<\delta_1$ for $\epsilon$ small enough, by \cref{M.w.flat} and \cref{constr.bds}
	(with $\delta_1$ the constant from \cref{thm.core2}).
	Setting
	\begin{align*}
		&A_t(\sigma):=\{w\in\bp:\fnorm(J(u^{\epsilon,w}_t)-\llbracket M\rrbracket )>\sigma\},
	\end{align*}
	we then have $A_0(\delta_1)=\emptyset$, and hence
	\begin{align*}
		&\operatorname{deg}(f(\cdot,0),A_0(\delta_1))=0.
	\end{align*}
	
	We claim that the same holds at time $T$. To this aim, we first check that $f(\cdot,t)\neq 0$ on
	$\bar{A_t(\mz\delta_1)}\setminus A_t(\delta_1)$ for all $t\ge 0$, provided that $\epsilon$ is small enough.
	This follows from \cref{thm.core2}: if we had a sequence $w_j\in\bp$ with $f(w_j,t_j)=0$
	and $w_j\in\bar{A_{t_j}(\mz\delta_1)}\setminus A_{t_j}(\delta_1)$,
	for some $t_j\ge 0$ and $\epsilon=\epsilon_j\to 0$, then
	\begin{align*}
		&\fnorm(J(u^{\epsilon_j,w_j}_{t_j})-\llbracket M\rrbracket )\in[\tmz\delta_1,\delta_1].
	\end{align*}
	By $\Gamma$-convergence (\cref{gamma.conv}), the currents $J(u^{\epsilon_j,w_j}_{t_j})$ would have a subsequential limit $S$
	which is an integral cycle,
	with $\fnorm(S-\llbracket M\rrbracket )\in[\mz\delta_1,\delta_1]$ and
	\begin{align*}
		&\mass(S)\le\liminf_{j\to\infty}E_{\epsilon_j}(u^{\epsilon_j,w_j}_{t_j})
		\le\liminf_{j\to\infty}E_{\epsilon_j}(u^{\epsilon_j,w_j}_0)
		\le\area.
	\end{align*}
	Also, since $f(w_j,t_j)=0$, we would have $P(S)=0$ and hence $\tilde\mass(S)\le\tilde\mass(\llbracket M\rrbracket )$. However, this contradicts the strict minimality of $\llbracket M\rrbracket $ guaranteed by \cref{thm.core2}.
	
	The claim now follows from standard properties of the topological degree.
	Indeed, from the previous remark it follows that $\deg(f(\cdot,t),A_t(\sigma))$ has the same value for all $\sigma\in[\mz\delta_1,\delta_1]$.
	Denoting by $d(t)$ this common value, note that $d(t)$ is a continuous function:
	indeed, for a fixed $t_0\in[0,T]$, we have
	\begin{align*}
		&A_t(\delta_1)\subseteq A_{t_0}(\tfrac{3}{4}\delta_1)\subseteq A_t(\tmz\delta_1)
	\end{align*}
	for all $t$ close to $t_0$ (by uniform continuity of $\fnorm(J(u^{\epsilon,w}_t)-\llbracket M\rrbracket )$ in the couple $(w,t)$).
	By invariance of the degree under homotopies, we then get
	\begin{align*}
		&d(t_0)
		=\operatorname{deg}(f(\cdot,t_0),A_{t_0}(\tfrac{3}{4}\delta_1))
		=\operatorname{deg}(f(\cdot,t),A_{t_0}(\tfrac{3}{4}\delta_1))
		=d(t),
	\end{align*}
	where the last equality comes from the fact that $f(\cdot,t)\neq 0$ on $\bar A_{t_0}(\frac{3}{4}\delta_1)\setminus A_t(\delta_1)$.
	This proves the claim, namely
	\begin{align*}
		&\operatorname{deg}(f(\cdot,T),A_T(\delta_1))=0.
	\end{align*}
	By additivity of the degree, we then have
	\begin{align*}
		&\operatorname{deg}(f(\cdot,T),\bp\setminus A_T(\delta_1))
		=\operatorname{deg}(f(\cdot,T),\bp)-\operatorname{deg}(f(\cdot,T),A_T(\delta_1))
		=1.
	\end{align*}
	This proves that there exists some $w=w(\epsilon,T)$ such that
	\begin{align}
		&P(J(u^{\epsilon,w}_T))=0,\quad \fnorm(J(u^{\epsilon,w}_T)-\llbracket M\rrbracket )\le\delta_1.
	\end{align}
	
	Also, any subsequential limit of a sequence $J(u^{\epsilon_j,w_j}_{T_j})$
	(with $w_j=w(\epsilon_j,T_j)$ as above)
	is an integral cycle $S$ with $P(S)=0$ and $\fnorm(S-\llbracket M\rrbracket )\le\delta_1$.
	Hence, \cref{thm.core2} gives $\mass(S)\ge\area$ but, by $\Gamma$-convergence,
	
	\begin{align}\label{eq.conv}
		&\liminf_{j\to\infty}E_{\epsilon_j}(u^{\epsilon_j,w_j}_{T_j})
		\ge\mass(S)
		\ge\area.
	\end{align}
	In particular, we must have $w_j\to 0$: if we had $|w_j|>r>0$ along a subsequence, \cref{constr.bds} would give
	\begin{align*}
		&E_{\epsilon_j}(u^{\epsilon_j,w_j}_{T_j})
		\le E_{\epsilon_j}(u^{\epsilon_j,w_j}_0)
		\le \mathcal{H}^m(M_{w_j})+\delta(\epsilon_j)
		\le\area-c
	\end{align*}
	for a constant $c>0$ such that $\sup_{w\in\bp\setminus r\bp}\mathcal{H}^m(M_w)<\area-c$, contradicting \cref{eq.conv}.
	
	Since $E_{\epsilon_j}(u^{\epsilon_j,w_j}_{T_j})
	\le E_{\epsilon_j}(u^{\epsilon_j,w_j}_0)$ and
	\begin{align*}
		&\limsup_{j\to\infty} E_{\epsilon_j}(u^{\epsilon_j,w_j}_0)
		\le \area
	\end{align*}
	(again by \cref{constr.bds}), recalling \cref{eq.conv} we deduce that
	\begin{align*}
		&E_{\epsilon_j}(u^{\epsilon_j,w_j}_0)-E_{\epsilon_j}(u^{\epsilon_j,w_j}_{T_j})\to 0.
	\end{align*}
	Since the previous sequences were arbitrary, for $w=w(\epsilon,T)$ as above we deduce
	\begin{align}\label{small.gap}
		&E_\epsilon(u^{\epsilon,w}_0)-E_\epsilon(u^{\epsilon,w}_T)
		\le \delta(\epsilon),\quad |w|\le\delta(\epsilon)
	\end{align}
	for an infinitesimal quantity $\delta(\epsilon)$, possibly different from the one in \cref{constr.bds}.
	The last property implies that $(u^{\epsilon,w}_t)_{t\in[0,T]}$
	is a genuine solution to the gradient flow equation, for $\epsilon$ small enough (as $\chi(w)=1$).
	
	\begin{rmk}\label{ell.0}
		When $\ell=0$ we have $\bp=\{0\}$ and $P\equiv 0$ identically. In this case we can prove a stronger statement: namely,
		\begin{align*}
			&\sup_{t\ge 0}\fnorm(J(u^{\epsilon}_t)-\llbracket M\rrbracket )\to 0
		\end{align*}
		as $\epsilon\to 0$ (we write $u^\epsilon_t$ rather than $u^{\epsilon,0}_t$ for simplicity). If this were not the case, since $\fnorm(J(u^{\epsilon_j}_0)-\llbracket M\rrbracket )\to 0$ (by \cref{constr.bds}),
		we could find $0<\alpha\le\delta_1$ and a sequence $\epsilon_j\to 0$ such that, for some $t_j\ge 0$,
		\begin{align*}
			&\fnorm(J(u^{\epsilon_j}_{t_j})-\llbracket M\rrbracket )=\alpha.
		\end{align*}
		A (subsequential) limit $S$ of $J(u^{\epsilon_j}_{t_j})$ would have $\fnorm(S-\llbracket M\rrbracket )=\alpha$
		and
		\begin{align*}
			&\mass(S)\le\liminf_{j\to\infty}E_{\epsilon_j}(u^{\epsilon_j}_{t_j})\le\liminf_{j\to\infty}E_{\epsilon_j}(u^{\epsilon_j}_0)\le\area,
		\end{align*}
		contradicting \cref{thm.core2}, which asserts that $\llbracket M\rrbracket $ strictly minimizes $\tilde\mass=\mass$ in the flat neighborhood $\bar B_{\delta_1}^\fnorm(\llbracket M\rrbracket )$ (containing $S$).
	\end{rmk}
	
	\cref{thm.crit} now follows from the proposition below, which crucially uses \cref{thm.core} (for $\ell=0$ this additional argument is not needed, as pointed out in the previous remark).
	
	\begin{proposition}
		For $\epsilon$ small enough and any $T\ge0$, taking $w=w(\epsilon,T)$ as above, we have
		\begin{align*}
			&\fnorm(J(u^{\epsilon,w}_t)-\llbracket M\rrbracket )\le\delta'(\epsilon)
		\end{align*}
		for all $t\in[0,T]$, for an infinitesimal quantity $\delta'(\epsilon)$.
	\end{proposition}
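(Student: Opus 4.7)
I would proceed by contradiction. Suppose the conclusion fails; then there exist $\alpha > 0$, sequences $\epsilon_j \downarrow 0$, $T_j > 0$ with $w_j := w(\epsilon_j, T_j)$, and times $s_j \in [0, T_j]$ such that
\[
\fnorm(J(u^{\epsilon_j,w_j}_{s_j}) - \llbracket M \rrbracket) \ge \alpha.
\]
Shrinking $\alpha$, I arrange $\alpha \le \delta_2(M, \eta)$, where $\delta_2$ is the constant from \cref{thm.core} and $\eta > 0$ is a density lower bound to be fixed in the next step. Since the flat distances at $t = 0$ and $t = T_j$ are both $o(1)$ (the former by \cref{constr.bds} combined with $w_j \to 0$, the latter by the $\Gamma$-convergence argument preceding \cref{eq.conv}), while $t \mapsto \fnorm(J(u^{\epsilon_j,w_j}_t) - \llbracket M \rrbracket)$ is continuous for each fixed $j$, the intermediate value theorem lets me pick the first crossing time $t_j \in (0, s_j]$ with $\fnorm(J(u^{\epsilon_j,w_j}_{t_j}) - \llbracket M \rrbracket) = \alpha$.

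I then invoke \cref{asymp.flow}: along a further subsequence, $\mu(u^{\epsilon_j,w_j}_t) \weakto \mu_t$ for every $t \ge 0$, where $\mu_t$ is the weight of a Brakke flow $\nu_t$ (in the Ginzburg--Landau case with an additional $\tmz|\alpha_t|^2\hau^n$ contribution, $\alpha_t$ solving the Hodge heat equation). The crucial input is \cref{small.gap}: combined with the monotonicity of $t \mapsto E_{\epsilon_j}(u^{\epsilon_j,w_j}_t)$ and \cref{constr.bds}, it forces the limit total mass $t \mapsto \mu_t(N)$ to be constant, equal to $\area$. Hence the gradient-flow dissipation vanishes in the limit, and Brakke's energy inequality forces the mean curvature of $\nu_t$ to vanish identically, so $\nu_t \equiv V$ for a time-independent stationary rectifiable $m$-varifold $V$ with $\mass(V) \le \area$. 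The a.e.\ density bound from \cref{asymp.flow} therefore promotes to a pointwise bound $\Theta^m(|V|, \cdot) \ge \eta$ on $\spt V$, with $\eta := \eta(t_0, N, \area + 1) > 0$ for any fixed $t_0 > 0$ at which the bound applies (and simply $\eta = 1$ in the Allen--Cahn and Yang--Mills--Higgs cases).

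Along a further subsequence $t_j \to t_\infty \in [0, \infty]$, and by \cref{gamma.conv} $J(u^{\epsilon_j,w_j}_{t_j}) \to S$ in the flat norm for some integral $m$-cycle $S$ with $\fnorm(S - \llbracket M \rrbracket) = \alpha$. Weak convergence $\mu(u^{\epsilon_j,w_j}_{t_j}) \weakto \mu_{t_\infty}$ gives $|S| \le \mu_{t_\infty}$, and since $|S|$ is $m$-rectifiable with $m < n$ while $\tmz|\alpha_{t_\infty}|^2\hau^n$ is absolutely continuous with respect to $\hau^n$, I conclude $|S| \le |V|$. The hypotheses of \cref{thm.core} are then all met, and the strict positivity $\fnorm(S - \llbracket M \rrbracket) = \alpha > 0$ yields the sought contradiction.

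The main obstacle is the identification step, where the a.e.\ density lower bound in \cref{asymp.flow} must be promoted to a uniform lower bound on the specific limit varifold $V$. This is particularly delicate in the Ginzburg--Landau setting, where the sharp density $\eta = 1$ is unavailable; the argument hinges on using the Brakke energy identity together with the near-constancy of the energy along the flow to conclude that the limit Brakke flow is genuinely time-independent, so a.e.\ bounds automatically become everywhere bounds on $V$. A minor additional point, the possibility $t_\infty = 0$, can be ruled out (or absorbed) by short-time continuity of the Brakke flow at its initial datum $\nu_0 = \hau^m \mrestr M$.
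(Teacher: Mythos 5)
The ingredients you invoke are the right ones — contradiction, a first crossing time, near-constancy of the energy from \cref{small.gap}, Brakke flow convergence via \cref{asymp.flow}, a density lower bound, and \cref{thm.core} — but there is a genuine gap in the step
\[
\mu(u^{\epsilon_j,w_j}_{t_j}) \weakto \mu_{t_\infty}.
\]
This is a \emph{diagonal} limit: \cref{asymp.flow} gives $\mu(u^{\epsilon_j,w_j}_t)\weakto\mu_t$ along a subsequence only at each \emph{fixed} $t$; when $t_j$ varies with $j$ there is no reason for the diagonal sequence to converge to the Brakke flow at $t_\infty$, and when $t_j\to\infty$ the symbol $\mu_{t_\infty}$ is not even defined. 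The same issue contaminates the extraction of $S$ as a limit of $J(u^{\epsilon_j,w_j}_{t_j})$: you need to know that $|S|\le|V|$ where $V$ is precisely the stationary limit you constructed from the Brakke flow, and that identification requires seeing $u^{\epsilon_j,w_j}_{t_j}$ at a fixed time slice of some flow.

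The paper resolves exactly this difficulty by splitting into two cases. If the crossing times $t_j$ remain bounded, one does not invoke Brakke flows or \cref{thm.core} at all: a direct parabolic energy identity (testing $\partial_t(\text{energy density})$ against a fixed $\phi$ and using that the total dissipation is $o(\epsilon_j^{-1})$) shows that $\mu(u^j_{t_j})\weakto\hau^m\mrestr M$, whence the limit cycle $S$ is supported in $M$ and equals $\llbracket M\rrbracket$ by the constancy theorem, contradicting $\fnorm(S-\llbracket M\rrbracket)=\alpha>0$. This forces $t_j\to\infty$. Then one performs a \emph{time shift}: setting $v^j_t:=u^j_{t_j-2+t}$, the quantity $u^j_{t_j}=v^j_2$ is seen at the \emph{fixed} time $t=2$ of a new flow, to which \cref{asymp.flow} applies cleanly on $[0,2]$; constancy of $\mu_t$ on $[0,2]$ follows by the same energy-identity argument, and the density bound is evaluated at $t\in[1,2]$. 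Your closing remark that $t_\infty=0$ is the delicate point is misdirected: that case is the easy one (the limit is $\hau^m\mrestr M$, forcing $S=\llbracket M\rrbracket$), while the real obstruction is $t_\infty$ large or infinite, which is precisely what the time-shift handles. A secondary, related gap: your deduction that $\mu_t(N)$ is constant for all $t\ge 0$ implicitly assumes $T_j\to\infty$; for $t$ beyond $\liminf T_j$ you have no control, which again is sidestepped by the case split and the shift.
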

	
	\begin{proof}
		Let $\delta_2$ be the constant from \cref{thm.core},
		chosen with $\eta$ to be the minimum of the density lower bounds at times $t\in[1,2]$
		in the asymptotic result \cref{asymp.flow} (for the three energies).
		
		Towards a contradiction, assume that there exist sequences $\epsilon_j\to 0$, $0\le T_j'\le T_j$, and $w_j=w(\epsilon_j,T_j)$ such that, setting $u^j_t:=u^{\epsilon_j,w_j}_{t}$, we have
		\begin{align}\label{alpha}
		&\fnorm(J(u^j_{T_j'})-\llbracket M\rrbracket )\ge\alpha
		\end{align}
		for some positive constant $\alpha$,
		which can be assumed smaller than $\delta_2$.
		
		Since $w_j\to 0$, using also \cref{constr.bds} we get
		\begin{align*}
			&\fnorm(J(u^{\epsilon_j,w_j}_0)-\llbracket M\rrbracket )
			\le\fnorm(J(u^{\epsilon_j,w_j}_0)-\llbracket M_{w_j}\rrbracket )+\fnorm(\llbracket M_{w_j}\rrbracket -\llbracket M\rrbracket )
			\to 0;
		\end{align*}
		thus, by continuity 
		we can actually assume that
		\begin{align}\label{alpha.bis}
			&\fnorm(J(u^j_{T_j'})-\llbracket M\rrbracket )=\alpha\le\delta_2.
		\end{align}
		
		Note that, by \cref{small.gap} and the estimates preceding it, we have
		\begin{align}\label{const.energy}
			&\lim_{j\to\infty}E_{\epsilon_j}(u^j_{t_j})
			=\area
		\end{align}
		for any sequence of times $t_j\in[0,T_j]$.
		
		We first show that $T_j'\to\infty$ as $j\to\infty$ and, in particular, $T_j'\ge 2$ eventually.
		Assume by contradiction that $\sup_j T_j'<\infty$ along a subsequence (not relabeled).
		Note that the energy densities $\mu(u^j_0)$ converge to $\mathcal{H}^m\mrestr M$ (by the same argument used in \cref{gamma.rmk}, since $\llbracket M\rrbracket $ is the limit of $J(u^j_t)$).
		
		Further, let $S$ be the limit of $J(u^j_{T_j'})$ and $\mu$ the limit of $\mu(u^j_{T_j'})$, up to subsequences; \cref{gamma.conv} gives the bound $|S|\le\mu$.
		We claim that
		\begin{align}\label{const.energy.meas}
			&\mu=\mathcal{H}^m\mrestr M,
		\end{align}
		which implies that $S$ is supported in $M$ and thus, by the constancy theorem and the integrality of $S$,
		$S$ is a union of connected components of $M$, up to changes in orientation. But \cref{alpha.bis} gives $\fnorm(S-\llbracket M\rrbracket )\le\delta_2$, which then implies $S=\llbracket M\rrbracket $ once we assume (without loss of generality) that $\delta_2$ is small enough. This contradicts the fact that
		$\fnorm(S-\llbracket M\rrbracket )=\alpha>0$.
		
		In order to check \cref{const.energy.meas}, for the Allen--Cahn energy we observe that
		\begin{align*}
			&\de_t\Big(\mz|du^j_t|^2+\frac{1}{4\epsilon_j^2}(1-|u^j_t|^2)^2\Big)
			=-|\dot u^j_t|^2-d^*(\dot u^j_t\,du^j_t),
		\end{align*}
		by direct computation using the gradient flow equation for $(u^j_t)$. Integrating over $[0,T_j']\times N$ against any test function $\phi$ (constant in time),
		we get
		\begin{align}\label{const.energy.test}
			&(c\epsilon_j)^{-1}\int_N\phi\,d(\mu^j_{T_j'}-\mu^j_0)
			=-\int_0^{T_j'}\int_N|\dot u^j_t|^2\phi-\int_0^{T_j'}\int_N\ang{\dot u^j_t\,d\phi,du^j_t}
		\end{align}
		(for $c=\frac{3}{2\sqrt{2}}$), where we set $\mu^j_t:=\mu(u^j_t)$.
		Using this identity with $\phi=1$ (or using directly the definition of gradient flow), we see that
		\begin{align*}
			&\int_0^{T_j'}\int_N|\dot u_t|^2
			=(c\epsilon_j)^{-1}(E_{\epsilon_j}(u^j_0)-E_{\epsilon_j}(u^j_{T_j'}))
			=o(\epsilon_j^{-1}).
		\end{align*}
		Since $\sup_j T_j'$ is finite, we have
		\begin{align*}
			&\int_0^{T_j'}\frac{|du^j_t|^2}{2}
			\le (c\epsilon_j)^{-1}\int_0^{T_j'}E_{\epsilon_j}(u^j_t)
			\le (c\epsilon_j)^{-1}T_j'E_{\epsilon_j}(u^j_0)
			=O(\epsilon_j^{-1}).
		\end{align*}
		Thus, by Cauchy--Schwarz, the right-hand side of \cref{const.energy.test} is infinitesimal with respect to $\epsilon_j^{-1}$ for any fixed $\phi$;
		this proves the claim, since the limit of $\mu^j_{T_j'}$ is then equal to the limit of $\mu^j_0$, which is $\mathcal{H}^m\mrestr M$.
		For Ginzburg--Landau the proof is the same, replacing the factor $(c\epsilon_j)^{-1}$ with $\pi\log(\epsilon_j^{-1})$
		(and $\dot u^j_t\,du^j_t$ with $\ang{\dot u^j_t, du^j_t}$).
		
		Finally, for Yang--Mills--Higgs direct computation gives
		\begin{align*}
			&\de_t\Big(|\nabla_t u^j_t|^2+\frac{1}{4\epsilon_j^2}(1-|u^j_t|^2)^2+\epsilon_j^2|\omega^j_t|^2\Big) \\
			&=-2(|\dot u_t|^2+\epsilon_j^2|\dot\alpha^j_t|^2)
			-2d^*(\ang{\nabla u^j_t,\dot u^j_t}+\epsilon^2\omega^j_t(\cdot,\dot\alpha^j_t)),
		\end{align*}
		where $\dot\alpha^j_t$ is identified with a vector field in the last term. The claim follows in an analogous way.
		
		Since we now know that $T_j'\to\infty$, and in particular $T_j'\ge 2$ eventually, for $j$ large enough we can define $v^j_t:=u^j_{T_j'-2+t}$ (for $t\in[0,2]$).
		We can now apply \cref{asymp.flow}: in the Allen--Cahn case, up to a subsequence, the energy density $\mu(v^j_t)$ converges to a limit measure $\mu_t$ for all $t\in[0,2]$,
		and the family $(\mu_t)_{t\in[0,2]}$ is an $(n-1)$-dimensional Brakke flow.
		Since
		\begin{align*}
			&E_{\epsilon_j}(v^j_0)-E_{\epsilon_j}(v^j_2)\to 0,
		\end{align*}
		the same argument used above implies that the limit $\mu_t$ does not depend on $t$;
		we call $\mu$ the common limit. It is the weight of a rectifiable varifold of mass $\mathcal{H}^m(M)$, and its $(n-1)$-density is bounded below by a positive constant $\eta$,
		since this is true for the measure $\mu_t$, for a.e.\ $t\in[1,2]$ (actually, $\eta=1$ for Allen--Cahn).
		
		By Brakke's inequality (tested with the constant function $1$), we deduce that
		this varifold is stationary.
		
		By the $\Gamma$-convergence result (\cref{gamma.conv}), the (subsequential) limit of $J(v^j_2)=J(u^j_{T_j'})$ is an integral cycle $S$ with $|S|\le\mu_2=\mu$.
		By \cref{alpha.bis}, 
		\begin{align*}
			&\fnorm(S-\llbracket M\rrbracket )=\alpha\in(0,\delta_2].
		\end{align*}
		Also, $\mu(N)=\area$. \cref{thm.core} then applies and gives the desired contradiction.
		
		The proof is identical in the Yang--Mills--Higgs setting. As for Ginzburg--Landau, \cref{asymp.flow} gives
		\begin{align*}
			&\mu
			=\mu_t
			=\mz|\alpha_t|^2\,\mathcal{H}^n+\nu_t
		\end{align*}
		for $t\in[0,2]$, where $\alpha_t$ solves the heat equation $\de_t\alpha_t=-dd^*\alpha_t$ and $(\nu_t)_{t\in[0,2]}$ is an $(n-2)$-dimensional Brakke flow.
		Since the first term is the absolutely continuous part of $\mu_t=\mu$ (for a.e.\ $t$),
		it must be constant in time. Thus, the same holds for $\nu_t=\nu$.
		Again, a subsequential limit $S$ of $J(u^j_{T_j'})$ has $|S|\le\mu$ and,
		since it is singular with respect to $\mathcal{H}^N$, we actually get $|S|\le\nu$.
		Since the $(n-2)$-density of $\nu$ is at least $\eta$, we reach again a contradiction to \cref{thm.core}.
	\end{proof}

	\begin{rmk}
		Without the second part of the proof (which requires \cref{thm.core}),
		using the argument used in the first part we can still show that, for all $T\le\tilde T(\epsilon)$,
		we have $\fnorm(J(u^{\epsilon,w}_t)-\llbracket M\rrbracket )\le\tilde\delta(\epsilon)$ for all $t\in[0,T]$
		($w$ depends on $\epsilon$ and $T$),
		with $\tilde\delta(\epsilon)\to 0$ and $\tilde T(\epsilon)\to\infty$. However, this is not enough to deduce \cref{thm.main}:
		in order to find critical maps we need arbitrarily large intervals $[0,T]$ for a fixed $\epsilon$.
	\end{rmk}

	\section{Proof of \cref{thm.core2}: strict local minimality of $M$ for the perturbed mass functional}
	
	This section is devoted to the proof of \cref{thm.core2}. The proof follows the (by now) well-known observation that, combining a compactness-and-contradiction argument with the regularity theory for mass-minimizing currents, one can improve infinitesimal minimality (i.e., positive second variation) to a local one, in a suitable weak topology; we refer, for instance, to \cite{DM,IM, White} for applications to mass-minimizing currents, and to \cite{CL} for applications to isoperimetric-type inequalities. Although the argument is quite standard, we report it here for the sake of completeness, and also because our setting is slightly different from the works cited above (cf.\ also \cite{Mesa}, where a very similar statement is proved in dimension one).
	
	In the following,  we let  $\rho_0$ be the constant fixed in \cref{sec.prel}.
	
	\begin{lemmaen}\label{cut}
		For any $0<\rho\le\rho_0/2$ there exists $\delta(M,\rho)>0$ such that, if $S$ is an $m$-cycle with $\fnorm(S-\llbracket M\rrbracket )\le\delta$ and
		\begin{align*}
			&\tilde\mass(S)
			\le\tilde\mass(\llbracket M\rrbracket )
			=\mathcal{H}^m(M),
		\end{align*}
		then there exists $S'$ supported in the tubular neighborhood $\bar B_\rho(M)$, with
		\begin{align*}
			&\tilde\mass(S')\le\tilde\mass(S).
		\end{align*}
		Moreover, if $S\neq \llbracket M\rrbracket $ then also $S'\neq \llbracket M\rrbracket $.	
	\end{lemmaen}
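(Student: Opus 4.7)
The plan is to construct $S'$ as a slicing cut-off of $S$ at an appropriately chosen radius. Since $\fnorm(S-\llbracket M\rrbracket)\le\delta$ and both are integral cycles, for $\delta$ small enough we can write $S-\llbracket M\rrbracket=\de R$ for an integral $(m+1)$-current $R$ with $\mass(R)\le 2\delta$. By the slicing coarea inequality
\[
\int_{\rho/2}^{\rho}\mass(\langle R,d_M,r\rangle)\,dr\le\mass(R)\le 2\delta,
\]
and the analogous statement for $S$ (which is controlled because $\int_{\rho/2}^{\rho}\mass(\langle S,d_M,r\rangle)\,dr\le\mass(S\mrestr\bar B_\rho(M)\setminus\bar B_{\rho/2}(M))$), I would select $r\in[\rho/2,\rho]$ such that $\mass(\langle R,d_M,r\rangle)\le 4\delta/\rho$ and $\mass(\langle S,d_M,r\rangle)$ is similarly small.

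Then I would set
\[
S':=\llbracket M\rrbracket+\de(R\mrestr\bar B_r(M)),
\]
which is automatically an integral cycle supported in $\bar B_r(M)\subseteq\bar B_\rho(M)$. Using the boundary-restriction formula $\de(R\mrestr\bar B_r(M))=(\de R)\mrestr\bar B_r(M)-\langle R,d_M,r\rangle$, and the fact that $\llbracket M\rrbracket$ is supported in $\bar B_r(M)$, one gets the equivalent expression $S'=S\mrestr\bar B_r(M)-\langle R,d_M,r\rangle$, yielding
\[
\mass(S')\le\mass(S)-\mass(S\mrestr(N\setminus\bar B_r(M)))+\mass(\langle R,d_M,r\rangle).
\]
For the perturbation term, the key observation is that $\omega_j$ vanishes on $M$ since $X=\operatorname{grad}(d_M^2/2)$ does; hence $|P(S)_j|=|\langle\de R,\omega_j\rangle|=|\langle R,d\omega_j\rangle|\le C\mass(R)=O(\delta)$, and the same for $P(S')$, giving $\lambda(|P(S')|^2-|P(S)|^2)=O(\delta^2)$.

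The main obstacle is arranging the correct net sign: the potential ``mass gain'' from inserting the slice is $O(\delta/\rho)$, which is not automatically dominated by the ``mass savings'' from discarding $S\mrestr(N\setminus\bar B_r(M))$, nor by the $O(\delta^2)$ perturbation in $|P|^2$. To close this gap I would invoke compactness: the hypotheses $\fnorm(S-\llbracket M\rrbracket)\le\delta$ and $\mass(S)\le\hau^m(M)$ together force, for $\delta$ small, that $|S|\weakstarto\hau^m\mrestr M$ as Radon measures, so $|S|(N\setminus\bar B_{\rho/2}(M))$ can be made arbitrarily small. A joint choice of $r$ (simultaneously controlling the slice of $R$ and the mass shell of $S$) allows absorbing the slice mass into the outside mass, possibly at the cost of a contradiction-compactness argument applied to a hypothetical sequence $S_k\to\llbracket M\rrbracket$ in flat norm violating the conclusion.

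Finally, the nontriviality clause ``$S'\neq\llbracket M\rrbracket$ when $S\neq\llbracket M\rrbracket$'' follows because $S'=\llbracket M\rrbracket$ is equivalent to $\de(R\mrestr\bar B_r(M))=0$, and by varying $r$ slightly in $[\rho/2,\rho]$ one can avoid the exceptional radii for which this accidentally holds (such radii form a set of measure zero, since otherwise the restriction of $R$ to all of $\bar B_\rho(M)\setminus\bar B_{\rho/2}(M)$ would be a cycle, forcing $S\mrestr(N\setminus\bar B_r(M))$ to be a boundary with the slice, and ultimately $S=\llbracket M\rrbracket$).
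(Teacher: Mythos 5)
Your approach is genuinely different from the paper's, and it has a gap that the vague compactness appeal does not close. You slice the given flat-norm filling $R$ of $S-\llbracket M\rrbracket$, whereas the paper slices $S$ itself: it forms the cycle $S_r:=\de(S\mrestr B_r(M))$ and then \emph{constructs} a new filling $R_r$ of $S_r$ via the isoperimetric inequality, obtaining the super-linear bound $\mass(R_r)\le C\mass(S_r)^{m/(m-1)}$. This exponent is the whole point: writing $f(r):=|S|(N\setminus B_r(M))$, coarea gives $\mass(S_r)\le -f'(r)$, and a differential-inequality argument (if $f'(r)\le -c\,f(r)^{(m-1)/m}$ a.e.\ then $f(\rho/2)\ge(c\rho/2m)^m$, contradicting flat closeness) produces a radius $r$ where $\mass(S_r)\le c\,f(r)^{(m-1)/m}$ and hence $\mass(R_r)\le C c^{m/(m-1)} f(r)\ll f(r)$. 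That guarantees a definite net mass decrease after the cut.

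Your construction $S'=S\mrestr\bar B_r(M)\pm\langle R,d_M,r\rangle$ requires $\mass(\langle R,d_M,r\rangle)\le |S|(N\setminus\bar B_r(M))$ (up to the $P$-terms), but there is no a priori relation between these two quantities: the slice of $R$ measures the geometry of $R$, not of $S$. It is perfectly consistent for $R$ to cross $\de B_r(M)$ with substantial $m$-dimensional area while $S=\llbracket M\rrbracket+\de R$ has almost no mass outside $B_r(M)$ (for instance, $R$ can contain a thin sheet roughly tangent to $\de B_r(M)$ whose boundary contributes little to $S\mrestr(N\setminus B_r(M))$). Your compactness argument shows only that $|S|(N\setminus\bar B_{\rho/2}(M))$ is small, i.e.\ the \emph{savings} are small — which is the wrong direction; one would need the slice mass to be even smaller, and nothing forces that. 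Replacing the slice $\langle R,d_M,r\rangle$ by an isoperimetric filling of $S_r$ is what gives the leverage. (Your observation that $P(\llbracket M\rrbracket)=0$, so $|P(S)|=O(\delta)$, is correct but does not fix this; the paper instead bounds $|P(S-S')|$ by the mass of an isoperimetric filling of $S-S'=S_{\ge r}+R_r$, which again scales favorably with $f(r)+\mass(R_r)$.) The nontriviality clause is also handled more simply in the paper: it falls out because the construction gives the \emph{strict} inequality $\tilde\mass(S')<\tilde\mass(S)\le\tilde\mass(\llbracket M\rrbracket)$ whenever the cut is actually performed, so $S'\neq\llbracket M\rrbracket$ automatically.
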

	
	In the proof, we will use the following simple well-known facts.
	
	\begin{lemmaen}\label{modmass}
		There exists a small constant $c(N)>0$ with the following property.
		Given an integral $k$-cycle $S$ on $N$, with $1\le k\le n-1$, if $\mass(S)\le c(N)$ then $S=\de R$, for an integral $(k+1)$-current $R$ with
		\begin{align*}
			&\mass(R)\le C(N)\mass(S)^{(k+1)/k}.
		\end{align*}
	\end{lemmaen}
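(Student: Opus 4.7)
The plan is to reduce to the classical Federer--Fleming isoperimetric inequality in Euclidean space, using an isometric embedding $\iota\colon N \hookrightarrow \R^L$ (e.g.\ Nash) together with a nearest-point retraction. Fix such an embedding and let $U \supset N$ be a tubular neighborhood of width $\rho(N)>0$ on which the nearest-point projection $\pi\colon U \to N$ is smooth and satisfies $\|d\pi\|_\infty \le C_1(N)$.

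Regard $S$ as an integral $k$-cycle in $\R^L$ (with essentially the same mass, up to a constant depending only on the embedding). The Euclidean Federer--Fleming isoperimetric inequality (see \cite{Simon}) produces an integral $(k+1)$-current $\tilde R$ in $\R^L$ with $\de \tilde R = S$ and
\[
\mass(\tilde R) \le C_2\, \mass(S)^{(k+1)/k},
\]
and, crucially, with $\spt \tilde R$ contained in the $C_3\, \mass(S)^{1/k}$-neighborhood of $\spt S$. The support bound comes from the standard proof: one applies the deformation theorem at a cubical grid of mesh $\epsilon \sim \mass(S)^{1/k}$; the skeletal part must vanish since its mass is strictly smaller than that of a single $k$-cell; and one then fills each cell carrying the remaining boundary by coning from its center, so that $\spt \tilde R$ lies inside the union of grid cells meeting $\spt S$.

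Finally, I would choose $c(N)>0$ small enough that $C_3\, c(N)^{1/k} < \rho(N)$, which ensures $\spt \tilde R \subset U$ whenever $\mass(S) \le c(N)$. Setting $R := \pi_* \tilde R$, an integral $(k+1)$-current on $N$, we have $\de R = \pi_*(\de \tilde R) = \pi_* S = S$ since $\pi$ restricts to the identity on $N$ and $\spt S \subset N$, while
\[
\mass(R) \le C_1^{k+1}\, \mass(\tilde R) \le C(N)\, \mass(S)^{(k+1)/k}.
\]
The only mildly delicate point is extracting the support bound from the Federer--Fleming construction; this is implicit in its standard deformation-theorem-plus-coning proof and is precisely what allows the argument to be globalized from $\R^L$ back to the compact manifold $N$.
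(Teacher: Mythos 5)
Your argument is correct and follows essentially the same route as the paper: embed $N$ in $\R^L$, apply the Federer--Fleming isoperimetric inequality (via the deformation theorem, Theorem 30.1 in \cite{Simon}), use the support control provided by the deformation-plus-coning construction, and project the resulting filling back onto $N$ via the nearest-point retraction once $\mass(S)$ is small enough. The only cosmetic difference is that you invoke a Nash isometric embedding, whereas the paper notes any smooth embedding suffices since one only needs constants depending on $N$.
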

	
	\begin{proof}
		%
		We can view $N$ as an embedded submanifold of some Euclidean space $\R^L$ (for the present purposes, we do not need an isometric embedding).
		Identifying $S$ with a cycle $\tilde S$ in $\R^L$, by \cite[Theorem~30.1]{Simon} and its proof we can find an integral $(k+1)$-current $\tilde R$ such that $\tilde S=\de \tilde R$ and
		\begin{align*}
			&\mass(\tilde R)
			\le C(k)\mass(\tilde S)^{(k+1)/k}
			\le C(N)\mass(S)^{(k+1)/k},
		\end{align*}
		with $\tilde R$ supported in the $r$-neighborhood of $\spt(\tilde S)\subseteq N$, for a distance
		\begin{align*}
			&r=C(k)\mass(\tilde S)^{1/k}
			\le C(N)\mass(S)^{1/k}.
		\end{align*}
		If $\mass(S)$ (and thus $r$) is small enough, we can project $\tilde R$ onto $N$ using the nearest point projection, which is Lipschitz near $N$, obtaining the desired $R$.
	\end{proof}
	
	\begin{lemmaen}\label{no.canc}
		Given a sequence of rectifiable $k$-currents $S_j\weakto S_\infty$ converging in the weak topology, if $\mass(S_j)\to\mass(S_\infty)$ then the weights $|S_j|\weakto|S_\infty|$, as Radon measures on $N$.
	\end{lemmaen}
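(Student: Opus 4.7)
The plan is to combine weak--$*$ compactness of Radon measures with the lower semicontinuity of mass on open sets, and then upgrade a one-sided estimate to equality using the convergence of the total masses.

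First, since $\sup_j \mass(S_j) = \sup_j |S_j|(N) < \infty$, by Banach--Alaoglu the sequence of measures $(|S_j|)$ is relatively compact in the weak--$*$ topology on the dual of $C(N)$. It suffices to show that every weak--$*$ subsequential limit $\mu$ coincides with $|S_\infty|$, since then the full sequence converges. Fix such a subsequence (not relabeled), so $|S_j| \weakstarto \mu$ as Radon measures, and note that by hypothesis
\begin{align*}
\mu(N) = \lim_{j\to\infty}|S_j|(N) = \mass(S_\infty) = |S_\infty|(N).
\end{align*}

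The core step is to prove $|S_\infty| \le \mu$ as Radon measures. Let $U \subseteq N$ be open. By definition of mass,
\begin{align*}
|S_\infty|(U) = \sup\bigl\{\ang{S_\infty,\omega} : \omega \in \Omega^k(N),\ \spt\omega\subset U,\ \|\omega\|_*\le 1\bigr\},
\end{align*}
where $\|\cdot\|_*$ denotes the comass norm. For any such $\omega$, weak convergence of currents gives $\ang{S_\infty,\omega} = \lim_j \ang{S_j,\omega}$, and for any $\phi \in C_c(U)$ with $\phi \ge \|\omega\|_*$ pointwise we have $|\ang{S_j,\omega}| \le \int_N \phi\, d|S_j|$. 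Passing to the limit and using $|S_j|\weakstarto\mu$,
\begin{align*}
\ang{S_\infty,\omega} \le \int_N \phi\, d\mu.
\end{align*}
Taking the infimum over admissible $\phi$ yields $\ang{S_\infty,\omega}\le\mu(U)$, and then the supremum over $\omega$ gives $|S_\infty|(U)\le\mu(U)$. Inner regularity extends this to all Borel sets.

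Combining $|S_\infty|\le\mu$ with $\mu(N)=|S_\infty|(N)$ forces equality $\mu=|S_\infty|$, since the nonnegative measure $\mu-|S_\infty|$ has zero total mass. As every weak--$*$ subsequential limit of $(|S_j|)$ equals $|S_\infty|$, the whole sequence converges weak--$*$ to $|S_\infty|$, which is exactly the claim. The only delicate point is the comass/duality argument in the middle paragraph, but this is standard and causes no real difficulty.
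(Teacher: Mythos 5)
Your proof is correct, but it runs ``in the opposite direction'' from the paper's. You establish the one-sided inequality $|S_\infty|\le\mu$ directly, by testing $S_\infty$ against compactly supported forms of pointwise comass at most $1$ and using continuity of $\omega\mapsto\ang{S_j,\omega}$ under weak convergence of currents together with $|\ang{S_j,\omega}|\le\int\|\omega(x)\|_*\,d|S_j|$; this inequality $|S_\infty|\le\mu$ is in fact a general consequence of $S_j\weakto S_\infty$ and $|S_j|\weakstarto\mu$ and does not use the hypothesis $\mass(S_j)\to\mass(S_\infty)$ at all, which enters only in the final pinch. The paper instead derives the \emph{reverse} inequality $\mu\le|S_\infty|$ by a splitting trick: for a compact $K$ and $r>0$ it applies lower semicontinuity of mass on the open set $N\setminus\bar B_r(K)$ and the Portmanteau bound $\mu(B_r(K))\le\liminf_j|S_j|(B_r(K))$, adds them, compares with $\lim_j|S_j|(N)=|S_\infty|(N)$ to get $\mu(B_r(K))\le|S_\infty|(\bar B_r(K))$, and sends $r\to0$; this direction genuinely requires the total-mass hypothesis throughout, not just at the end. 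Both routes then conclude by observing that a one-sided comparison of nonnegative measures with equal finite total mass forces equality. Your approach isolates more cleanly the hypothesis-free content (lower semicontinuity of the mass measure), while the paper's argument treats lower semicontinuity of mass on open sets as a black box and avoids spelling out the comass duality; neither is more elementary overall, and both are fine.

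One small presentational nit: where you write ``Taking the infimum over admissible $\phi$ yields $\ang{S_\infty,\omega}\le\mu(U)$,'' the infimum actually yields $\int\|\omega(\cdot)\|_*\,d\mu$, which is $\le\mu(U)$ because the pointwise comass is $\le1$ and supported in $U$; the chain of inequalities is right, but the stated intermediate value is slightly off. Also be careful to write $\|\omega(x)\|_*$ for the pointwise comass when you mean the pointwise bound $\phi(x)\ge\|\omega(x)\|_*$, as ``$\phi\ge\|\omega\|_*$ pointwise'' with $\|\omega\|_*$ read as the sup-comass would give a weaker, still sufficient but wasteful, bound.
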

	
	
	\begin{proof}
		Up to subsequences, assume that $|S_j|\weakto\mu$ for a Radon measure $\mu$. Then, for any compact set $K\subseteq N$ and any $r>0$,
		lower semicontinuity of the mass gives
		\begin{align*}
			&|S_\infty|(N\setminus\bar B_r(K))
			\le\liminf_{j\to\infty}|S_j|(N\setminus\bar B_r(K)),
		\end{align*}
		and thus
		\begin{align*}
			\mu(B_r(K))+|S_\infty|(N\setminus\bar B_r(K))
			&\le\liminf_{j\to\infty}|S_j|(B_r(K))+\liminf_{j\to\infty}|S_j|(N\setminus\bar B_r(K)) \\
			&\le\lim_{j\to\infty}|S_j|(N).
		\end{align*}
		By assumption, the latter equals $|S_\infty|(N)$.
		This implies $\mu(B_r(K))\le|S_\infty|(\bar B_r(K))$ and, letting $r\to 0$, we deduce that $\mu\le|S_\infty|$.
		Since $\mu(N)=\lim_{j\to\infty}|S_j|(N)=|S_\infty|(N)$, we must then have $\mu=|S_\infty|$.
	\end{proof}
	
	\begin{proof}[Proof of \cref{cut}]
		For any $r>0$ we denote
		\begin{align*}
			&S_r:=\de(S\mrestr B_r(M)),
		\end{align*}
		which is the $r$-slice of $S$ by the Lipschitz function $d_M$ (see \cite[Section~28]{Simon}).
		The current $S_r$ is an integral $(m-1)$-cycle for a.e.\ $r>0$, and its mass satisfies
		\begin{align*}
			&\int_0^\infty\mass(S_r)\,dr
			\le\mass(S)
		\end{align*}
		(as $d_M$ is $1$-Lipschitz on $N$). Next, we set
		\begin{align*}
			&S_{<r}:=S\mrestr B_r(M),\quad S_{\ge r}:=S\mrestr (N\setminus B_r(M)),
		\end{align*}
		and define the decreasing function
		\begin{align*}
			&f:[0,\rho]\to\R,\quad f(r):=\mass(S_{\ge r}),
		\end{align*}
		so that the coarea formula for slices gives 
		\begin{align}\label{der.f}
			&\mass(S_r)
			\le -f'(r)
		\end{align}
		for a.e.\ $r$. We can assume that $S$ is not supported in $\bar B_\rho(M)$, since otherwise we can just take $S':=S$. In particular, $f\ge f(\rho)>0$.
		
		Given $r\in(0,\rho)$ such that $S_r$ is an integral cycle and \cref{der.f} holds, if $\mass(S_r)$ is small enough then, using \cref{modmass}, we can find an integral $m$-current $R_r$ such that
		\begin{align}\label{S}
			&\de R_r=S_r,\quad
			\mass(R_r)
			\le C(N)\mass(S_r)^{m/(m-1)}
		\end{align}
		for $m\ge 2$; note that for $m=1$ we have $S_r=0$ once we assume that $\mass(S_r)<1$ (since $S_r$ is an integral $0$-current), and we can take $R_r:=0$ in the sequel.
		Also, as the proof of the lemma shows, $R_r$ is supported near $\spt(S_r)\subseteq B_{\rho_0/2}(M)$,
		hence in $\bar B_{\rho_0}(M)$ (for $\mass(S_r)$ small enough). Thus, applying the nearest point projection $\bar B_{\rho_0}(M)\to\bar B_r(M)$, which is $C(M)$-Lipschitz, we can assume that $\spt(R_r)\subseteq\bar B_r(M)$.
		
		Defining $S':=S_{<r}-R_r$, note that $\de S'=S_r-\de R_r=0$ and that
		\begin{align}\label{mass.gain}
			&\mass(S')
			\le\mass(S)-\mass(S_{\ge r})+\mass(R_r)
			=\mass(S)-f(r)+\mass(R_r).
		\end{align}
		
		Also, we claim that
		\begin{align}\label{P.loss}
		\begin{aligned}
			|P(S')|^2-|P(S)|^2
			&\le |P(S+S')|\,|P(S-S')| \\
			&\le C(M)(\mass(S)+\mass(S'))(f(r)+\mass(R_r))^{(m+1)/m}
		\end{aligned}
		\end{align}
		provided that $f(r)+\mass(R_r)$ is small enough. Indeed, in this case, using \cref{modmass} we have
		\begin{align*}
			&S-S'=S_{\ge r}+R_r=\de Q_r
		\end{align*}
		for some $(m+1)$-current $Q_r$ of mass bounded by $C(N)(f(r)+\mass(R_r))^{(m+1)/m}$, and using Stokes we obtain
		\begin{align*}
			&|P(S-S')|
			\le\sum_{j=1}^\ell\lvert\ang{\de Q_r,\omega_j}\rvert
			\le C(M)\mass(Q_r),
		\end{align*}
		which gives the above bound. To sum up, assume that
		\begin{align}\label{small.f}
			&f(r)<c,\quad f'(r)>-cf(r)^{(m-1)/m},
		\end{align}
		for a suitable small constant $c$. Then, for $m>1$, \cref{der.f} and \cref{S} give
		\begin{align*}
			&\mass(S_r)
			\le cf(r)^{(m-1)/m}
			\le c, \quad
			3\mass(R_r)
			\le C(N)c^{m/(m-1)}f(r)
			<f(r),
		\end{align*}
		and all the previous bounds hold true. In particular, \cref{mass.gain} gives
		\begin{align*}
			&\mass(S')
			\le\mass(S)
			\le\mathcal{H}^m(M),
		\end{align*}
		while 
		summing \cref{mass.gain} and \cref{P.loss} we obtain
		\begin{align*}
			\tilde\mass(S')
			&\le\tilde\mass(S)-f(r)+\mass(R_r)
			+C(M)(f(r)+\mass(R_r))^{(m+1)/m} \\
			&\le\tilde\mass(S)
			-f(r)+\mass(R_r)
			+\mz(f(r)+\mass(R_r)) \\
			&<\tilde\mass(S)
		\end{align*}
		for $c$ small enough,
		where we used again the bound $3\mass(R_r)<f(r)$ in the last inequality. This also holds for $m=1$, since
		\begin{align*}
			&\tilde\mass(S')
			\le\tilde\mass(S)-f(r)
			+C(M) f(r)^{2}
			\le\tilde\mass(S)
			-\mz f(r)
			<\tilde\mass(S)
		\end{align*}
		as $0<f(r)<c$.
		Also, we cannot have $S'=\llbracket M\rrbracket $, since $\tilde\mass(S')<\tilde\mass(S)\le\tilde\mass(\llbracket M\rrbracket )$.
		Note that $c=c(M)$.
		
		In order to conclude, we need to check that conditions \cref{small.f} hold on a set of radii $r$ of positive measure, provided that $S$ is close enough to $\llbracket M\rrbracket $ in the flat norm.
		We first show that $f(r)<c$ for all $r\in(\rho/2,\rho)$. Indeed, assume by contradiction that we have
		\begin{align*}
			&|S_j|(N\setminus B_{\rho/2}(M))\ge c
		\end{align*}
		for a sequence of $m$-cycles such that $\mass(S_j)\le\mathcal{H}^m(M)$ and $S_j\weakto\llbracket M\rrbracket $. Since $\mathcal{H}^m(M)\le\liminf_{j\to\infty}\mass(S_j)$ by lower semicontinuity of the mass, we have
		\begin{align*}
			&\mathcal{H}^m(M)=\lim_{j\to\infty}\mass(S_j).
		\end{align*}
		Hence, \cref{no.canc} applies (with $S_\infty=\llbracket M\rrbracket $) and gives $|S_j|\weakto\mathcal{H}^m\mrestr M$. In particular,
		\begin{align*}
			&0
			=\mathcal{H}^m\mrestr M(N\setminus B_{\rho/2}(M))
			\ge\limsup_{j\to\infty}|S_j|(N\setminus B_{\rho/2})
			\ge c,
		\end{align*}
		which is impossible.
		Finally, assume by contradiction that $f'(r)\le -cf(r)^{(m-1)/m}$ for a.e.\ $r\in(\rho/2,\rho)$. Recalling that $f(r)>0$ for all $r\le\rho$, the differential inequality becomes
		\begin{align*}
			&(f^{1/m})'
			\le -\frac{c}{m}
		\end{align*}
		a.e., which gives $0\le f(\rho)^{1/m}\le f(\rho/2)^{1/m}-\frac{c\rho}{2m}$ since $f$ is decreasing. This implies $f(\rho/2)\ge(\frac{c\rho}{2m})^m$, which is again false if $S$ is close enough to $\llbracket M\rrbracket $ in the flat norm.
	\end{proof}
	
	We will now show  that $\llbracket M\rrbracket $ is a strict local minimum of $\tilde\mass$ with respect to the flat metric.
	Fix a radius $0<\rho\le{\rho_0}/{2}$. Since a sequence of integral $m$-cycles with bounded mass has a converging subsequence in the weak topology, with respect to which the map $P$ is continuous, we can find $S$ which minimizes the functional $\tilde\mass$, among integral $m$-cycles in the closed flat neighborhood $\bar B_\delta^{\mathcal{F}}(\llbracket M\rrbracket )$, with
	\begin{align*}
		&\tilde\mass(S)\le\tilde\mass(\llbracket M\rrbracket )=\mathcal{H}^m(M).
	\end{align*}
	We let $\delta_2:=\delta(M,\rho)$, with $\delta$ given by \cref{cut}: we can then assume that $S$ is supported in $\bar B_\rho(M)$.
	Assuming that $S\neq\llbracket M\rrbracket $, we will reach a contradiction once we select $\rho$ small enough.
	
	We claim that there exists a minimizer $\bar S$ (for $\tilde\mass$) among all integral $m$-cycles of the form $\llbracket M\rrbracket +\de R$, with $R$ an $(m+1)$-current supported in $\bar B_\rho(M)$.
	Indeed, for a minimizing sequence $(S_j)$, let $\bar S$ be the weak limit of $S_j$ (up to subsequences).
	By the deformation theorem (see, e.g., \cite[Corollary~29.3]{Simon}), viewing $N$ as an embedded submanifold in $\R^L$, we can write $S_j=P+\de Q_j$ for a sequence $(Q_j)$ of integral currents of bounded mass, supported near $\bar B_\rho(M)$, and a constant (up to subsequences) polyhedral cycle $P$. Assuming $Q_j\weakto\bar Q$, we then get $\bar S=P+\de\bar Q$. Thus,
	\begin{align*}
		&\bar S-\llbracket M\rrbracket 
		=(\bar S-S_j)+(S_j-\llbracket M\rrbracket )
		=\de(\bar Q-Q_j)+(S_j-\llbracket M\rrbracket )
	\end{align*}
	is the boundary of a current supported near $\bar B_\rho(M)$. By projecting this current first onto $N$ and then onto $\bar B_\rho(M)$, it follows that $\bar S$ is the desired minimizer.
	
	Note that, since $S$ is a competitor, we have
	\begin{align*}
		&\tilde\mass(\bar S)
		\le\tilde\mass(S)
		\le\mathcal{H}^m(M).
	\end{align*}
	We can assume that $\bar S\neq\llbracket M\rrbracket $ (if $\bar S=\llbracket M\rrbracket$ then $\tilde\mass(\bar S)=\tilde\mass(S)$, and we can replace $\bar S$ with $S\neq\llbracket M\rrbracket$).	
We now show that \(\bar S\) satisfies a suitable minimality condition.

\begin{lemmaen}
There exists a constant \(C=C(M)\) such that, for all  \(0<r<\rho_0/2\)  and all integral $m$-cycles \(X\) with \(\spt(X)\subseteq B_{\rho_0}(M)\) and  \(\diam(\spt (X))\le r\),
\[
\mass(\bar S)\le (1+Cr)\mass(\bar S+X)+Cr\mass(X).
\]
\end{lemmaen}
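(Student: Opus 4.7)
The overall strategy is to apply the $\tilde\mass$-minimality of $\bar S$ to the competitor $\bar S + X$, suitably projected into the admissible class when needed, and then translate the resulting inequality for $\tilde\mass$ into the stated inequality for $\mass$, using that the $\lambda|P|^2$-contribution only changes by $O(r\,\mass(X))$ under the addition of a small-diameter $X$.

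I would first dispose of two trivial cases. If $\mass(\bar S + X) \ge \mass(\bar S)$, the desired inequality holds with room to spare; and if $\spt(X) \cap \bar B_\rho(M) = \emptyset$, then since $\spt(\bar S) \subseteq \bar B_\rho(M)$ the two supports are disjoint and $\mass(\bar S + X) = \mass(\bar S) + \mass(X) \ge \mass(\bar S)$, reducing to the previous case. So I may assume $\spt(X) \subseteq \bar B_{\rho+r}(M) \subseteq B_{\rho_0}(M)$ (by the diameter bound) and $\mass(X) \le 2\mathcal{H}^m(M)$ (by the reverse triangle inequality). Next, a cone construction in normal coordinates around a point of $\spt(X)$ (valid since $r < \rho_0/2$ may be taken smaller than the injectivity radius of $N$, after shrinking $\rho_0$ if needed) produces an integral $(m+1)$-current $Q$ with $\partial Q = X$ and $\mass(Q) \le C(N)\,r\,\mass(X)$. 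Composing with a Lipschitz retraction of $\bar B_{\rho + 2r}(M)$ onto $\bar B_\rho(M)$ that fixes $\bar B_\rho(M)$ and moves points by at most $O(r)$, I may assume, at the cost of a bounded multiplicative error, that $Q$ is supported in $\bar B_\rho(M)$ and $X$ has been replaced by $\tilde X = \partial Q$; the straight-line homotopy used for the retraction gives $\tilde X - X = \partial H$ with $\mass(H) \le C(M)\,r\,\mass(X)$. Then $\bar S + \tilde X = \llbracket M\rrbracket + \partial(R + Q)$ is admissible for $\bar S$.

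Minimality of $\bar S$ therefore gives
\[
\mass(\bar S) + \lambda|P(\bar S)|^2 \le \mass(\bar S + \tilde X) + \lambda|P(\bar S + \tilde X)|^2.
\]
Expanding $|P(\bar S + \tilde X)|^2 - |P(\bar S)|^2 = 2P(\bar S) \cdot P(\tilde X) + |P(\tilde X)|^2$ and using Stokes to write
\[
P(\tilde X) = P(\partial Q) = \bigl(\langle Q, d\omega_j\rangle\bigr)_{j=1}^\ell,
\]
yields $|P(\tilde X)| \le C(M)\,r\,\mass(X)$. Combined with the uniform bound $|P(\bar S)| \le C(M)\,\mass(\bar S) \le C(M)\,\mathcal{H}^m(M)$ and the standing reduction $\mass(X) \le 2\mathcal{H}^m(M)$ (which absorbs the quadratic term $|P(\tilde X)|^2 \le C\,r^2\,\mass(X)^2$ into $C'\,r\,\mass(X)$), we obtain
\[
\mass(\bar S) \le \mass(\bar S + \tilde X) + C(M)\,r\,\mass(X).
\]
The desired inequality then follows by bounding $\mass(\bar S + \tilde X) \le (1 + C(M)\,r)\,\mass(\bar S + X) + C(M)\,r\,\mass(X)$, which comes from the retraction estimate applied to the shell $\bar B_{\rho + r}(M) \setminus \bar B_\rho(M)$.

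The main technical obstacle will be controlling the mass error in this last retraction step, i.e.\ arranging that the retraction $\Pi$ has Jacobian close to $1$ on the thin shell $\bar B_{\rho + r}(M) \setminus \bar B_\rho(M)$, so that its pushforward changes mass only by a factor $(1 + O(r))$. This can be achieved by choosing $\Pi$ as the time-$1$ map of a smooth normal vector field of size $O(r)$ supported in the shell, for which the Jacobian estimate is elementary. A cleaner alternative, likely preferable in the actual write-up, is to bypass the retraction altogether by enlarging the minimization class used to define $\bar S$ to include cycles $\llbracket M\rrbracket + \partial R$ with $R$ supported in a slightly larger tubular neighborhood, chosen once and for all so that the cone $Q$ for every $X$ as in the statement automatically fits inside.
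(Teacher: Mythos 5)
Your proposal is correct and follows essentially the same approach as the paper: cone over $X$ to get a filling $Q$ with $\mass(Q)\le Cr\mass(X)$, push the competitor into $\bar B_\rho(M)$ via a retraction that is $(1+Cr)$-Lipschitz on the shell, invoke the $\tilde\mass$-minimality of $\bar S$, and bound the $|P|^2$-discrepancy by Stokes applied to the pushed-forward cone together with the a priori bound $|P(\bar S)|\le C(M)$. The ``technical obstacle'' you flag at the end is a non-issue: the paper simply uses the nearest point projection $\pi_\rho:B_{\rho_0}(M)\to\bar B_\rho(M)$, which is $(1+C(M)r)$-Lipschitz on $\bar B_{\rho+r}(M)$ and fixes $\bar B_\rho(M)$, so that $(\pi_\rho)_*(\bar S+X)=\bar S+(\pi_\rho)_*X$ is directly admissible with the needed mass control.
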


\begin{proof}
Let \(X\) be as in the statement  and let \(R\) be the cone over \(X\) (taken from a point in $\spt(X)$). Note that \(\partial R=X\) and
\[
\mass(R)\le C(N)r \mass(X).
\]
We can assume that the support of $X$ intersects $\bar B_\rho(M)$, since otherwise the inequality is trivial.
Define \(T:=(\pi_{\rho})_{*}(\bar S+X)=\bar S+(\pi_{\rho})_* X\), where \(\pi_{\rho}:B_{\rho_0}(M) \to \bar B_{\rho}(M)\) is the nearest point projection. Since \(\pi_{\rho} \) is $(1+C(M)r)$-Lipschitz on $\bar B_{\rho+r}(M)$, we can bound
\[
\mass(T)\le (1+Cr)\mass(\bar S+X),\quad \mass((\pi_{\rho})_* R)\le (1+Cr)\mass(R)\le Cr\mass(X).
\]
Using Stokes as in the proof of \cref{P.loss}, we obtain
$$ |P(T)-P(\bar S)|=|P((\pi_\rho)_*X)|=|P(\de(\pi_\rho)_*R)|\le C\mass((\pi_\rho)_*R)\le Cr\mass(X). $$
Note that $|P(\bar S)|^2\le\tilde\mass(\bar S)\le\mathcal{H}^m(M)$. Also, we can assume that $\mass(\bar S+X)\le 2\mass(\bar S)$,
since otherwise the statement is trivial (provided that $1+C\rho\le 2$). In particular, we have $|P(\bar S+X)|\le C\mass(\bar S+X)\le C\mass(\bar S)$; hence,
$$ |P(\bar S)|+|P(T)|\le C. $$
By minimality of $\bar S$, we have $\tilde\mass(\bar S)\le\tilde\mass(T)$, and it follows that
\begin{align*}
\mass(\bar S)&\le \mass(T)+C|P(T)-P(\bar S)|\le(1+Cr)\mass(\bar S+X)+Cr\mass(X). \qedhere
\end{align*}
\end{proof}

	We now show that such $\bar S\neq\llbracket M\rrbracket $ cannot exist for $\rho$ small enough. Note that $\pi_*\bar S=\llbracket M\rrbracket $ (with $\pi:\bar B_\rho(M)\to M$ the nearest point projection), since $\pi_*\bar S$ is an $m$-cycle homologous to $\llbracket M\rrbracket $ in $M$, and since any $(m+1)$-current in $M$ vanishes. Hence,
	\begin{align*}
		&\flat(\bar S-\llbracket M\rrbracket )
		=\flat(\bar S-\pi_*\bar S)
		\le C(M)\rho\mass(\bar S)
		\le C(M)\rho,
	\end{align*}
	as points $y\in\bar B_\rho(M)$ can be connected to $\pi(y)$ with a geodesic segment of length at most $\rho$.
	
	Using the notation $\bar S=\bar S^\rho$ to emphasize the dependence on $\rho$, it follows that $\bar S^\rho$ is arbitrarily close to $\llbracket M\rrbracket $ in the flat norm.
	From the lower semicontinuity of the mass and the inequality $\mass(\bar S^\rho)\le\mathcal{H}^m(M)$, we also have
	\begin{align*}
		&\lim_{\rho\to 0}\mass(\bar S^\rho)
		=\mathcal{H}^m(M).
	\end{align*}
	From this and \cref{no.canc} it follows that the weight measure $|\bar S^\rho|$ converges to $\mathcal{H}^m\mrestr M$. In particular the cylindrical excess of $\bar S^\rho$, as defined in \cite{DS}, converges to zero in suitable charts adapted to $M$.
	
	By the main result of \cite{DS}, namely \cite[Theorem~6.1]{DS}\footnote{The previous lemma implies that $\mass(\bar S)\le(1+Cr)\mass(\bar S+X)+Cr\mass(\bar S\mrestr\operatorname{spt}(X)+X)$, for a possibly different $C$. In \cite{DS} another definition of almost minimality is used, where the factor $(1+Cr)$ does not appear; however, the proof from \cite{DS} can be adapted with trivial changes.}  (see also \cite{B,SS}), which applies thanks to the previous lemma, this then implies that $\bar S^\rho$ is eventually a graph over $M$, converging to $M$ in the $C^{1,\alpha}$ topology for some $\alpha>0$.
%
	More precisely, $\bar S^\rho=\llbracket M_{\varphi^\rho}\rrbracket $ for a nontrivial section $\varphi^\rho\to 0$ in $C^{1,\alpha}(M,T^\perp M)$
	(using the notation of \cref{sec.prel}).
	For $\rho$ small enough, the inequality $\tilde\mass(\bar S^\rho)\le\area$ contradicts \cref{second.var.bd.cons}. This completes the proof of \cref{thm.core2}.
	
	\section{A rigidity result for non-degenerate minimal submanifolds among varifolds}
	\subsection{Proof of \cref{thm.core.main}}
	Let $M^m$ be a (smooth, embedded) non-degenerate minimal submanifold, in a closed ambient $(N^n,g)$;
	we do not need $M$ or $N$ to be orientable for this theorem to hold.
	
	For a fixed small $0<\rho<\operatorname{inj}(N)$, let $U:=B_\rho(M)$ be the tubular neighborhood of $M$ with radius $\rho$, and let $\pi:U\to M$ be the nearest point projection.
	Let us recall the statement of \cref{thm.core.main}, for the reader's convenience.
	
	\begin{thm*}
		Let \(M\) and \(N\) be as above. Given $\eta,\Lambda>0$, there exists $\delta(M,N,\eta,\lambda)\in(0,\rho)$ with the following property:
		if a stationary rectifiable $m$-varifold $V$ is such that
		\begin{itemize}
		\item $d_H(\spt|V|,M)\le\delta$ (with $d_H$ the Hausdorff distance),
		\item $\mathbb{M}(V)\le\Lambda$ (with $\mathbb{M}(V)$ the total mass of $V$),
		\item $\Theta^m(|V|,y)\ge\eta$ for all $y\in\spt|V|$,
		\item $\spt|V|\cap\pi^{-1}(x)$ is nonempty for all $x\in M$, and consists of a single point for $x\in M\setminus E$, for a set $E\subseteq M$ with $\mathcal{H}^m(E)\le\delta$,
		\end{itemize}
		then $\spt|V|=M$.
	\end{thm*}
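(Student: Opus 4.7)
The plan is to proceed by contradiction. Suppose the conclusion fails: there exist $\eta, \Lambda > 0$ and a sequence of stationary rectifiable $m$-varifolds $V_j$ satisfying the four hypotheses with $\delta_j \to 0$, yet $\spt|V_j| \ne M$ for every $j$. I will extract a non-trivial Jacobi field on $M$ in the limit, contradicting non-degeneracy.

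\emph{Graph parametrization.} Combining the single-preimage hypothesis on $M \setminus E_j$ with Allard's $\epsilon$-regularity theorem — applied to the rescaled varifold $V_j/\eta$, whose density on its support is $\ge 1$ and whose local mass ratios are close to $1$ by the Hausdorff proximity to $M$ and the bound $\mathbb{M}(V_j) \le \Lambda$ — I obtain a $C^{1,\alpha}$ normal section $\varphi_j: M \setminus E_j' \to T^\perp M$ parametrizing $\spt|V_j|$ over $M \setminus E_j'$, with an enlarged exceptional set $\mathcal{H}^m(E_j') \to 0$ and $\|\varphi_j\|_{C^{1,\alpha}} \to 0$. The stationarity computation forces the density $\theta_j$ of $V_j$ to be locally constant on the regular part; by connectedness of the graph for $\delta_j$ small, $\theta_j$ is a single constant in $[\eta, C(M, \Lambda)]$.

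\emph{Normalization and passage to a Jacobi field.} Set $\epsilon_j := \|\varphi_j\|_{L^2(M \setminus E_j')} > 0$ (if $\varphi_j \equiv 0$ off the exceptional set then stationarity together with $\mathcal{H}^m(E_j') \to 0$ would force $\spt|V_j| = M$, contradicting our assumption) and $\hat\varphi_j := \varphi_j/\epsilon_j$. Extending $\hat\varphi_j$ by zero across $E_j'$, the $C^{1,\alpha}$-smallness of $\varphi_j$ gives a uniform $W^{1,2}$-bound on $\hat\varphi_j$, hence a subsequential weak $W^{1,2}$ and strong $L^2$ limit $\hat\varphi$ with $\|\hat\varphi\|_{L^2(M)} = 1$. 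For $\psi \in C^\infty(M, T^\perp M)$, extend $\psi$ to a vector field $X$ on $\bar B_{\rho_0}(M)$ by parallel transport along normal geodesics, and test the first-variation identity $\delta V_j(X) = 0$. Expanding the tangential divergence on the graph of $\varphi_j$ to first order around $M$, and using the minimality of $M$ to kill the zeroth-order term, one finds
\begin{align*}
0 = \theta_j \int_{M \setminus E_j'} \langle J\varphi_j, \psi \rangle \, d\mathcal{H}^m + Q_j + R_j,
\end{align*}
where $Q_j$ collects the quadratic errors in $\varphi_j$ and $R_j$ is the contribution from $\pi^{-1}(E_j')$. Dividing by $\theta_j \epsilon_j$ and passing to the limit (using $Q_j, R_j = o(\epsilon_j)$), I obtain $\int_M \langle J\hat\varphi, \psi\rangle\, d\mathcal{H}^m = 0$ for every test $\psi$; elliptic regularity yields $\hat\varphi \in C^\infty(M, T^\perp M)$ with $J\hat\varphi = 0$, and non-degeneracy of $M$ forces $\hat\varphi \equiv 0$, contradicting $\|\hat\varphi\|_{L^2(M)} = 1$.

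\emph{Main obstacle.} The delicate point is the quantitative control of the remainder $R_j$ coming from the exceptional set. Monotonicity with the density lower bound $\eta$ gives $\mathbb{M}(V_j \text{ restricted to }\pi^{-1}(E_j')) \le C(M)\, \mathcal{H}^m(E_j')$, so the naive bound $|R_j| \le C\|X\|_{C^1}\mathcal{H}^m(E_j') = o(1)$ is \emph{not} automatically $o(\epsilon_j)$. Overcoming this requires choosing $X$ to vanish on (a neighborhood of) $E_j'$ — e.g.\ by multiplying $\psi$ by a cutoff equal to $1$ outside a small enlargement of $E_j'$ and to $0$ on $E_j'$ — and then verifying that such truncated test fields are dense in $L^2(M, T^\perp M)$, enough to identify the limiting Jacobi equation on all of $M$. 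The quadratic error $Q_j = O(\|\varphi_j\|_{C^1})\big(\|\varphi_j\|_{L^2} + \|\nabla^\perp\varphi_j\|_{L^2}\big)\|\psi\|_{C^1}$ is more routine, being $o(\epsilon_j)$ since $\|\varphi_j\|_{C^1} \to 0$. A related subtle point is the choice of norm for the normalization: one must ensure $\epsilon_j$ really captures the size of $\spt|V_j|\setminus M$ and is not swamped by the exceptional set's contribution, for which the mass bound on $V_j$ over $\pi^{-1}(E_j')$ is again decisive.
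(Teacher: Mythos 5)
Your overall architecture — parametrize $\spt|V_j|$ over $M$, normalize by the size of the displacement, and extract a nontrivial Jacobi field contradicting non-degeneracy — is exactly the paper's strategy. But the crucial first step, obtaining a $C^{1,\alpha}$ graph via Allard's $\epsilon$-regularity theorem applied to $V_j/\eta$, has a genuine gap, and it is precisely the gap that the paper is built to avoid.

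Allard's regularity theorem needs two things: density $\ge 1$ on the support \emph{and} local mass ratios $\le 1 + \delta$. Rescaling $V_j$ by $1/\eta$ achieves the first, but the mass ratio of $V_j/\eta$ at a point of density $\theta$ is $\approx \theta/\eta$, which lies anywhere in $[1, C/\eta]$ and is \emph{not} controlled to be close to $1$ when $\eta < 1$. Hausdorff proximity to $M$ plus the global mass bound $\mathbb{M}(V_j) \le \Lambda$ give no local mass-ratio control: a single sheet over $M$ with constant multiplicity $\bar\theta \in (\eta, 1)$ satisfies every hypothesis, yet the rescaled mass ratio is $\bar\theta/\eta$, far from $1$. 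The paper states this limitation explicitly in the remark following \cref{thm.core}: the Allard route works only for $\eta = 1$ (the Allen--Cahn and Yang--Mills--Higgs cases), but \cref{thm.core.main} must cover $\eta < 1$ because the sharp density lower bound is unknown in the Ginzburg--Landau flow. The paper's replacement is a hands-on Lipschitz parametrization (\cref{lip}, built on the compactness Lemmas~\ref{tech1} and \ref{tech2} and the excess-displacement inequality of Step~1), which makes no regularity claim about the multiplicity whatsoever — the bounding-box construction of $\hat f$ and $f$ is specifically designed to ``ignore the question of the regularity of these multiplicities.''

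Two further points. First, you assert that stationarity forces the density $\theta_j$ to be a single constant; this is stronger than what can be justified, since the graph is only $C^{1,\alpha}$ \emph{away} from $E_j'$ and $E_j'$ may disconnect the regular part. The paper instead invokes Allard's strong constancy lemma (the 1986 result, distinct from the regularity theorem) to obtain $\|\theta_j - \bar\theta\|_{L^1(M \setminus E_j)} \to 0$, which suffices for the linearization. Second, your extension of $\hat\varphi_j$ by zero across $E_j'$ does not obviously preserve membership in $W^{1,2}(M)$, since $E_j'$ need not have a nice boundary and $\varphi_j$ need not vanish on $\partial E_j'$; the paper avoids this by constructing $f$ as a \emph{globally} Lipschitz map on $M$ (via Lipschitz extension of the corner coordinates of the bounding box), so the $W^{1,2}$ bound is immediate. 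Your treatment of the error term $R_j$ via cutoffs vanishing on $E_j'$ is a workable alternative to the paper's direct bound on $|V_j|(\pi^{-1}(E_j))$, but it does not repair the foregoing issues.
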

	
	Assuming \cref{thm.core.main} for a moment, let us show how to deduce \cref{thm.core}.
	
	\begin{proof}[Proof of \cref{thm.core}]
		Assume by contradiction that the sequences $(V_j)$ and $(S_j)$ satisfy all the assumptions (with $V=V_j$ and $S=S_j$), for a sequence $\delta_j\to 0$
		(in place of $\delta_2$). Then lower semicontinuity of the mass gives
		\begin{align*}
			&\mathcal{H}^m(M)
			\le\liminf_{j\to\infty}\mass(S_j)
			\le\limsup_{j\to\infty}\mass(V_j)
			\le\mathcal{H}^m(M).
		\end{align*}
		Hence, $\mass(S_j)$ and $\mass(V_j)$ both converge to the area of $M$. Assuming $V_j\weakto V_\infty$ up to subsequences,
		using 
		\cref{no.canc} we get
		\begin{align*}
			&\mathcal{H}^m\mrestr M
			=\lim_{j\to\infty}|S_j|
			\le\lim_{j\to\infty}|V_j|
			=|V_\infty|.
		\end{align*}
		Since $\mathcal{H}^m\mrestr M$ and $|V_\infty|$ have the same total mass, this forces
		\begin{align*}
			&|V_\infty|=\mathcal{H}^m\mrestr M.
		\end{align*}
		The stationary varifold $V_\infty$ is then supported in $M$ and the previous equality, together with the constancy theorem (or the fact that $V_\infty$ is rectifiable), implies that $V_\infty=M$ (with multiplicity one). Also, by Hausdorff convergence  of the support (which follows from the monotonicity formula and the lower density bound), we get
		\begin{align}\label{conv.spt}
			&d_j:=\max\{\operatorname{dist}(y,M)\mid y\in\spt|V_j|\}\to 0.
		\end{align}
		
		From \cref{conv.spt} and $|S_j|\le|V_j|$, eventually we have $\spt|S_j|\subseteq U=B_\rho(M)$. By the constancy theorem (and integrality), $\pi_*S_j$ is a constant (integer) multiple of $\llbracket M\rrbracket $, on each connected component of $M$;
		since this multiple is bounded above and $\pi_*S_j\weakto\llbracket M\rrbracket $, we deduce that
		\begin{align*}
			&\pi_*S_j=\llbracket M\rrbracket 
		\end{align*}
		eventually.
		Note that $S_j$ is not supported in $M$ (eventually), since this would imply $S_j=\pi_*S_j=\llbracket M\rrbracket $, contradicting the assumption that $S_j\neq\llbracket M\rrbracket$.
		
		Thus, $\spt|S_j|\cap\pi^{-1}(x)\neq\emptyset$ for a.e.\ $x\in M$ (eventually), and hence for all $x\in M$ since the support is compact.
		The same must then hold for $|V_j|$, as $\spt|V_j|\supseteq\spt|S_j|$.
		Since $S_j$ is integral and the density of $V_j$ is upper semicontinuous, we also deduce that each fiber $\spt|V_j|\cap\pi^{-1}(x)$ contains at least one point of density at least $1$.
		
		By the area formula, denoting by $E_j\subseteq M$ the set of points such that the fiber contains more than one point, we get
		\begin{align*}
			&\mathcal{H}^m(M)+\eta\mathcal{H}^m(E_j)
			\le\mass(\pi_*V_j)
			\le(1+C(M)d_j)\mass(V_j)
			\to\mathcal{H}^m(M),
		\end{align*}
		since $\pi$ is $(1+C(M)r)$-Lipschitz on $\bar B_r(M)$, and this forces $\mathcal{H}^m(E_j)\to 0$.
		
		Finally, the fact that each fiber is nonempty implies $d_H(\spt|V_j|,M)\to 0$. Thus, $V_j$ eventually satisfies all the assumptions of \cref{thm.core.main}, and we get $\spt|V_j|=M$ (eventually). However, this contradicts the previous remark that $\spt|S_j|\not\subseteq M$, which implies $\spt|V_j|\not\subseteq M$.
	\end{proof}
	
	We now turn to the proof of \cref{thm.core.main}, which is rather technical and is split into several steps.
	The main idea is to represent $V$ as a multigraph over $M$. This multigraph consists of a single layer above each $x\in M\setminus E$, in view of the last assumption.
	Hence, we can define a section $f:M\setminus E\to T^\perp M$ and, once we normalize it, we hope to obtain a Jacobi field in the limit $\delta\to 0$, defined a.e.\ on $M$ (recall that $\mathcal{H}^m(E)\le\delta$). However, in order to have a nontrivial limit section, we need a strong convergence, which is guaranteed if we have a uniform $W^{1,2}$ bound.
	
	In order to have this bound, we need to define a section on all of $M$. Since $V$ is not integer rectifiable, the multigraph is not a superposition of graphs of multiplicity one.
	Rather, roughly speaking, each layer could have some variable real-valued multiplicity, and we do not have any obvious information on the regularity of the latter.
	Hence, rather than taking an average of the points above each $x\in M$, we will perform a less canonical construction; it has the advantage of retaining enough regularity, and allows us to ignore the question of the regularity of these multiplicities.
	
	\begin{proof}[Proof of \cref{thm.core.main}]
	In the sequel, up to modifying $\eta$, we assume without loss of generality that $\Lambda=\mathcal{H}^m(M)$, so that $\mass(V)\le\mathcal{H}^m(M)$.
	Along the proof, we will often require that $\rho$ is suitably small (depending also on $\eta$); this is legitimate since we require $\delta<\rho$.
	
		\textbf{Step 1 (excess-displacement inequality).}
		For any $y\in U$, let $P^0_y\subseteq T_yN$ be the parallel transport of the $m$-plane $T_{\pi(y)}M$ along the shortest geodesic from $\pi(y)$ to $y$.
		Given an $m$-plane $P\subseteq T_yN$, with $y$ in $U$, we define its \emph{excess} to be
		\begin{align*}
			&\exc(P)^2:=\mz\|P-P^0_y\|^2,
		\end{align*}
		with respect to the Hilbert--Schmidt norm on endomorphisms of $T_yN$ (we identify a plane with the orthogonal projection onto it).
		Note that
		\begin{align*}
			&\exc(P)^2=\sum_{k=1}^m|(P^0_y)^\perp e_k|^2
		\end{align*}
		for any orthonormal basis $(e_k)_{k=1}^m$ of $P$.
		Also, given a Borel set $B\subseteq U$, we let
		\begin{align*}
			&\exc(V,B)^2:=\int_{\Gr_m(B)}\exc(P)^2\,dV(y,P)
		\end{align*}
		be the \emph{excess} of $V$ on $B$ (where $\Gr_m(B)$ the Grassmannian bundle of $m$-planes over $B$) and
		\begin{align*}
			&\dis(V,B)^2:=\int_B\operatorname{dist}(y,M)^2\,d|V|(y)
		\end{align*}
		its \emph{displacement}.
		
		
		Since $V$ is compactly supported in $U$, we can test its stationarity against the vector field $X:=\operatorname{grad}\frac{d_M^2}{2}$ (with $d_M:=\operatorname{dist}(\cdot,M)$). We claim that
		\begin{align*}
			&\dive_P X
			=\exc(P)^2+O(\exc(P) d_M(y))+O(d_M(y)^2)
		\end{align*}
		for any $m$-plane $P\subseteq T_y N$ and any $y\in U$.
		Indeed, the smooth map $P\mapsto\dive_P X$ (defined on $\Gr_m(U)$) vanishes with order two (at least) on each plane of the form $P=T_xM$,  $x\in M$, since $X$ vanishes on $M$ and $M$ is minimal. Also, at any $x\in M$, $\nabla X:T_xN\to T_xN$ is the orthogonal projection on the normal bundle; hence, $\dive_P X=\exc(P)^2$ for $P\subseteq T_x N$ and $x\in M$, and the claim follows.
		Hence, by stationarity of $V$,
		\begin{align*}
			&\int_{\Gr_m(U)}\exc(P)^2\,dV(y,P)
			\le C(M)\int_{\Gr_m(U)}(\exc(P) d_M(y)+d_M(y)^2)\,dV(y,P).
		\end{align*}
		By Young's inequality, we deduce
		\begin{align}\label{exc.dis}
			&\exc(V)
			\le C(M)\dis(V),
		\end{align}
		where we abbreviate $\exc(V):=\exc(V,U)$ and $\dis(V):=\dis(V,U)$.
		
		\textbf{Step 2 (construction of the section $\bm{f}$).}
		Given $\gamma>0$, we let $\tilde S$ be the subset of $\operatorname{spt}|V|\subseteq U$ consisting of those points $y$ such that
		\begin{align*}
			&\exc(V,B_r(y))^2
			\ge\gamma r^m
		\end{align*}
		for some radius $0<r<\mz\operatorname{inj}(M)$ (depending on $y$), or such that the approximate tangent plane of $V$ at $y$ does not exist.
		Also, let
		\begin{align*}
			&S:=\pi(\tilde S).
		\end{align*}
		
		In particular, Vitali's covering lemma gives
		\begin{align}\label{tilde.S.bound}
			&|V|(\tilde S)
			\le C(M,\gamma)\exc(V)^2,
		\end{align}
		since by the monotonicity formula we have $|V|(B_r(y))\le C(N)\mass(V)r^m\le C(M)r^m$ for all $y\in N$ and all $r>0$,
		as well as
		\begin{align}\label{S.bound}
			&\mathcal{H}^m(S)\le C(M,\gamma)\exc(V)^2,
		\end{align}
		since $\mathcal{H}^m(\pi(B_{r}(y)))\le C(M)r^m$.
		
		
		It is now convenient to identify $N$ isometrically as a submanifold of some Euclidean space $\R^L$. In the sequel, we will identify planes in $TN$ (such as $T_xM$ and $T_x^\perp M$) with planes in $\R^L$ (passing through the origin).
		
		Let $\mathcal{K}$ be the collection of all nonempty compact subsets of $\R^L$, endowed with the Hausdorff distance. The proof of the next lemma, modeled after \cite{Al},  is quite technical but essentially standard, and is postponed to the next subsection.

		\begin{lemmaen}\label{lip}
			Provided that $\rho$ and $\gamma$ are chosen small enough, depending on $M$ and $\eta$, the map given by
			\begin{align*}
				&F:M\setminus S\to\mathcal{K},\quad F(x):=\exp_x^{-1}(\spt|V|\cap\pi^{-1}(x))
			\end{align*}
			is locally 1-Lipschitz (namely, $1$-Lipschitz on any small geodesic ball $B_{c(M,\eta)}(x)$ in $M$). Also, for any $x\in M$, the cardinality of $\spt|V|\cap\pi^{-1}(x)\setminus\tilde S$ is bounded by a constant $C(M,\eta)$.
		\end{lemmaen}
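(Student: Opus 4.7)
The plan is to combine the small-excess condition defining $\tilde S$ with Allard's $\varepsilon$-regularity theorem and the density lower bound $\eta$, in order to represent $\spt|V|$ near every point $y\in\spt|V|\setminus\tilde S$ as a small $C^{1,\alpha}$ graph over $M$ in normal exponential coordinates; both claims of the lemma will then be direct consequences of this uniform multigraph structure.

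At each $y\in\spt|V|\setminus\tilde S$ the density bound $\Theta^m(|V|,y)\ge\eta$ combined with the monotonicity formula and $\mass(V)\le\mathcal{H}^m(M)$ yields $c(M)\eta r^m\le|V|(B_r(y))\le C(M)r^m$ for $r\le r_0(M)$, while the defining property of $\tilde S$ gives the excess bound $\exc(V,B_r(y))^2\le\gamma r^m$ for all such $r$. Choosing $\gamma$ small enough in terms of $\eta$, Allard's theorem applies on some ball $B_{r_1(M,\eta)}(y)$ and shows that $\spt|V|\cap B_{r_1}(y)$ is a $C^{1,\alpha}$ graph of a function $w_y$ over the tangent plane $T_yV$, with $\|w_y\|_{C^{1,\alpha}}\to 0$ as $\gamma\to 0$. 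Since $T_yV$ is $O(\sqrt\gamma)$-close to $T_{\pi(y)}M$ (by the tilt-excess inequality applied at $y$) and $d_M(y)\le\rho$, a coordinate change through the normal exponential map of $M$ rewrites this as a smooth section $u_y:D_y\subseteq M\to T^\perp M$ defined on a geodesic ball $D_y$ of radius $r_2(M,\eta)$ around $\pi(y)$, with $\|u_y\|_{C^1(D_y)}\to 0$ as $\gamma,\rho\to 0$ and $\{\exp_x(u_y(x))\mid x\in D_y\}\subseteq\spt|V|$.

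From this structural description both conclusions follow. For the Lipschitz statement: for $x_1,x_2\in M\setminus S$ at geodesic distance less than $\tmz r_2$ and any $v_1\in F(x_1)$, set $y_1:=\exp_{x_1}(v_1)$ and extend the sheet $u_{y_1}$ to $x_2$; then $v_2:=u_{y_1}(x_2)\in F(x_2)$. Writing $u_{y_1}$ in a local orthonormal frame of $T^\perp M$ and differentiating the map $x\mapsto u_{y_1}(x)\in T_x^\perp M\subset\R^L$ in the ambient Euclidean metric, one estimates $|v_2-v_1|_{\R^L}\le C(M)(\|\nabla u_{y_1}\|_{C^0}+\|u_{y_1}\|_{C^0})\,d_M(x_1,x_2)\le d_M(x_1,x_2)$ once $\gamma,\rho$ are chosen small; exchanging the roles of $x_1$ and $x_2$ yields the Hausdorff bound. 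For the cardinality: distinct points $y_1,\dots,y_k\in\spt|V|\cap\pi^{-1}(x)\setminus\tilde S$ produce distinct graphs $u_{y_j}$ over a common ball $D$ around $x$, which must be pairwise disjoint as subsets of $N$ (coincidence of two graphs at any $x'\in D$ would propagate by the local graph property to agreement throughout $D$, contradicting $y_j\ne y_k$ at $x$); each sheet then contributes $|V|$-mass at least $\tmz\eta\mathcal{H}^m(D)$ by the density bound and the graph area formula, and summing disjoint contributions gives $k\cdot\tmz\eta\mathcal{H}^m(D)\le\mass(V)\le\mathcal{H}^m(M)$, whence $k\le C(M,\eta)$.

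The main technical obstacle is ensuring that Allard's hypotheses and output constants are uniform in $y$ (depending only on $M,N,\eta$), and that the coordinate change from a graph over $T_yV$ to a normal graph over $M$ absorbs both the residual tilt between $T_yV$ and $T_{\pi(y)}M$ and the height $d_M(y)\le\rho$ into an arbitrarily small $C^1$-norm for $u_y$, so that the Lipschitz constant in the ambient Euclidean metric closes up at $1$ and the sheet-disjointness argument goes through.
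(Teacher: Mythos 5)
Your proposal is built on applying Allard's $\varepsilon$-regularity theorem at every point $y\in\spt|V|\setminus\tilde S$, and everything else (the Lipschitz bound for $F$ and the cardinality bound) is drawn from the resulting single-sheeted $C^{1,\alpha}$ graph structure. That step does not go through in the generality required here. Allard's theorem needs a mass-ratio hypothesis of the form $\omega_m^{-1}r^{-m}|V|(B_r(y))\le 1+\delta$ together with $\Theta^m(|V|,y)\ge 1$, which together force $V$ to be a single sheet of multiplicity close to one near $y$. In the present lemma the only available density hypothesis is $\Theta^m(|V|,\cdot)\ge\eta$, where $\eta$ can be strictly smaller than one; indeed, as the remark following \cref{thm.core} explains, this is unavoidable in the Ginzburg--Landau case, since the sharp lower bound $\eta=1$ is not known for the parabolic limit there. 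Small tilt excess at all scales around $y$ does not rule out $\spt|V|$ consisting of several nearly horizontal sheets of small multiplicity near $y$: in that configuration the density-ratio hypothesis of Allard's theorem fails, and so does its conclusion (a single graph). Both halves of your argument inherit this gap --- the sheet $u_y$ may not exist as a well-defined section, and the ``pairwise disjoint graphs each carrying mass $\ge\tmz\eta\mathcal{H}^m(D)$'' picture is not available. What you have proved is essentially the easy case the paper itself isolates ($\eta=1$, Allen--Cahn and Yang--Mills--Higgs), not the general statement.

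The paper sidesteps Allard entirely. It proves two compactness lemmas (\cref{tech1} and \cref{tech2}) whose only input is that a stationary $m$-varifold with zero excess and density bounded below is a locally finite union of parallel planes with constant multiplicities --- no integrality, no $C^{1,\alpha}$ regularity. \cref{tech1} gives a propagation statement: from a support point $(x,t)$ with small excess one can iterate to find a support point $(0,t_\infty)$ with $|t_\infty-t|\le 2\tau|x|$, which is exactly the locally $1$-Lipschitz bound after choosing $\tau,\rho,\gamma$ small. \cref{tech2} gives a mass-amplification bound across scales, which, combined with the lower density bound $\eta$ and the global mass bound, is run through a multi-scale clustering argument to cap the cardinality of $\spt|V|\cap\pi^{-1}(x)\setminus\tilde S$. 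If you want to salvage your approach, you would need to replace Allard by some such soft structural argument valid for arbitrary $\eta>0$; as written, the proposal does not prove the lemma.
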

		
		Note that $F(x)\subset T_x^\perp M\subset\R^L$; for instance, if $\spt|V|\cap\pi^{-1}(x)=\{x\}$, then $F(x)=\{0\}$. 		
		Also, denoting by $J(\pi,P)$ the Jacobian of $d\pi|_{P}$, for any $m$-plane $P\in\Gr_m(U)$ tangent to $y\in U$ it is easy to check that
		\begin{align}\label{J.approx}
			&J(\pi,P)
			=1-\mz\exc(P)^2+O(\exc(P)^3)+O(d_M(y)).
		\end{align}
		If $y\in\spt|V|\setminus\tilde S$, by the lower density bound we have
		\begin{align}\label{exc.point}
			&\exc(T_yV)^2
			\le\limsup_{r\to 0}\frac{\gamma r^m}{|V|(B_r(y))}
			\le C(M,\eta)\gamma,
		\end{align}
		and hence \cref{J.approx} gives
		\begin{align}\label{J.mz}
			&J(\pi,T_yV)\ge\mz,
		\end{align}
		provided that $\gamma$ and $\rho$ are small enough. The area formula, together with the second part of \cref{lip} (and the upper bound on the density given by the monotonicity formula), implies then that
		\begin{align*}
			&|V|(\pi^{-1}(S)\setminus\tilde S)
			\le C(M,\eta)\mathcal{H}^m(S).
		\end{align*}
		Using also \cref{exc.dis}, \cref{tilde.S.bound} and \cref{S.bound}, we conclude that
		\begin{align}\label{mass.bound}
			&|V|(\pi^{-1}(S))
			\le C(M,\eta,\gamma)\exc(V)^2
			\le C(M,\eta,\gamma)\dis(V)^2.
		\end{align}
		
		
		We now 
		assign to each $x\in M\setminus S$ the smallest rectangle $\prod_{k=1}^L[a_k,b_k]\subset\R^L$ containing $F(x)$. Since $F$ is locally $1$-Lipschitz,
		the same holds for $x\mapsto a_k(x)$ and $x\mapsto b_k(x)$. Hence, we can extend them to $C(M,\eta)$-Lipschitz maps from $M$ to $\R$, and define
		\begin{align*}
			&\hat f:M\to\R^L,\quad \hat f(x):=\Big(\frac{a_1(x)+b_1(x)}{2},\dots,\frac{a_{L}(x)+b_{L}(x)}{2}\Big),
		\end{align*}
		as well as the section of the normal bundle $T^\perp M$
		\begin{align*}
			&f:M\to T^\perp M,\quad f(x):={T_x^\perp M}(\hat f(x)),
		\end{align*}
		given by the orthogonal projection of $\hat f(x)$ onto $T_x^\perp M$ (viewed as an $(n-m)$-plane through the origin of $\R^L$).
		
		
		The maps $f$ and $\hat f$ are $C(M,\eta)$-Lipschitz and have the property that
		\begin{align}\label{hatf.diam}
			&|f(x)-\exp_x^{-1}(y)|
			\le|\hat f(x)-\exp_x^{-1}(y)|
			\le C(L)\operatorname{diam}(F(x)),
		\end{align}
		for all $x\in M\setminus S$ and $y\in\spt|V|\cap\pi^{-1}(x)$. 
		In particular, $f(x)=\hat f(x)=\exp_x^{-1}(y)$ if $x\nin S$ and $F(x)=\{y\}$ is a singleton.
		
		\textbf{Step 3 (bound on the differential of $\bm{f}$).}
		For any fixed $x\in M\setminus S$, we claim that
		\begin{align}\label{hausd}
			&\limsup_{x'\to x}\frac{d_H(F(x'),F(x))}{|x'-x|}
			\le C(M)\max_{y\in\spt|V|\cap\pi^{-1}(x)}(\exc(T_yV)+|y-x|)
		\end{align}
		as $x'\to x$ in $M\setminus S$.
		Indeed, given a sequence $x_k\to x$ (with $x_k\nin S$), let $y_k\in\spt|V|\cap\pi^{-1}(x_k)$ and assume that $y_k\to y\in\spt|V|\cap\pi^{-1}(x)$. Up to subsequences, assume that
		\begin{align*}
			&v:=\lim_{k\to\infty}\frac{x_k-x}{|x_k-x|},\quad w:=\lim_{k\to\infty}\frac{y_k-y}{|y_k-y|}
		\end{align*}
		both exist. By blowing up at scale $|y_k-y|$ and using the upper semicontinuity of the support, it is immediate to check that $w\in T_yV=:P_y$. By assumption, since $y\nin\tilde S$, \cref{exc.point} holds; hence, the vector $w$ is almost parallel to $P^0_y$, and thus to $T_xM$. This forces
		\begin{align*}
			&\mz|x_k-x|
			\le|y_k-y|
			\le 2|x_k-x|,
		\end{align*}
		provided that $\rho$ and $\gamma$ are small enough (and $k$ is big enough). Now $P_y^\perp w=0$ implies
		\begin{align*}
			&|(P_y^0)^\perp w|
			=|P_y^\perp w-(P_y^0)^\perp w|
			\le\|P_y-P_y^0\|,
		\end{align*}
		and thus 
		\begin{align*}
			&|T_x^\perp M (w)|
			\le\|P_y-P_y^0\|+C(M)|y-x|,
		\end{align*}
		where we used the fact that 
		\[
		\|P_y^0-T_xM\|\le C(M)|y-x|^2\le  C(M)|y-x|.
		\]
		Since $T_x^\perp M (v)=0$, letting $z:=\lim_{k\to\infty}\frac{y_k-y}{|x_k-x|}$ (up to subsequences) we get
		\begin{align*}
			&|T_x^\perp M(z-v)|
			\le 2|T_x^\perp M(w)|
			\le2\|P_y-P_y^0\|+C(M)|y-x|.
		\end{align*}
		On the other hand, $x_k-x=\pi(y_k)-\pi(y)$; dividing by $|x_k-x|$ and taking the limit, we get
		\begin{align*}
			&v=d\pi(y)[z]=T_xM(z)+O(|y-x|),
		\end{align*}
		which gives $T_xM(z-v)=O(|y-x|)$. Since $z-v\in T_xN$, we arrive at
		\begin{align*}
			\lim_{k\to\infty}\frac{|(y_k-x_k)-(y-x)|}{|x_k-x|}
			&=|z-v| \\
			&\le|T_xM(z-v)|+|T_x^\perp(z-v)| \\
			&\le C(M)(\exc(T_yV)+|y-x|).
		\end{align*}
		Writing $y=\exp_x(u)$ and $y_k=\exp_{x_k}(u_k)$, observe that
		\begin{align*}
			&(y_k-x_k-u_k)-(y-x-u)
			=O(|u||u_k-u|)+O(|u||x_k-x|)
		\end{align*}
		(by Taylor expansion of an extension of $\exp_x(u)-x-u$ to a map $H:M\times\R^L\to\R^L$
		such that $H(x,0)=0$ and $\de_{u}H(x,0)=0$). Since $|u|\le\rho$, for $\rho$ small enough this gives
		\begin{align*}
			&|(y_k-x_k)-(y-x)|
			\ge\mz|u_k-u|-C(M)|u||x_k-x|
		\end{align*}
		eventually, and (since $|u|\le C(M)|y-x|$) the previous bounds combine to give
		\begin{align*}
			&\limsup_{k\to\infty}\frac{|u_k-u|}{|x_k-x|}
			\le C(M)(\exc(T_yV)+|y-x|),
		\end{align*}
		which implies \cref{hausd}.
		
		From the construction of $\hat f$, we get the differential bound
		\begin{align}\label{hatf.meno.x}
			&|d\hat f(x)|
			\le C(M)\max_{y\in\spt|V|\cap\pi^{-1}(x)}(\exc(T_yV)+|y-x|)
		\end{align}
		for a.e.\ $x\in M\setminus S$, as well as
		\begin{align}\label{h}
			&|dh(x)|
			\le C(M)\max_{y\in\spt|V|\cap\pi^{-1}(x)}(\exc(T_yV)+|y-x|)
		\end{align}
		for the Lipschitz function
		\begin{align*}
			&h(x):=\operatorname{diam}F(x)
		\end{align*}
		(initially defined on $M\setminus S$ and then extended to a $C(M,\eta)$-Lipschitz function on $M$), for a.e.\ $x\in M\setminus S$.
		
		From \cref{hatf.meno.x} and the definition of $f$, it follows that
		\begin{align}\label{f.bd}
		\begin{aligned}
			|\nabla^\perp f(x)|
			&\le |df(x)| \\
			&\le |d\hat f(x)|+C(M)|\hat f(x)| \\
			&\le C(M)\max_{y\in\spt|V|\cap\pi^{-1}(x)}(\exc(T_yV)+|y-x|)
		\end{aligned}
		\end{align}
		a.e.\ on $M\setminus S$. 
		
		\textbf{Step 4 (definition and nontriviality of the limit section).}
		Assume by contradiction that the statement is false; hence, there exists a sequence of varifolds $V_j$ satisfying all the assumptions with $\delta=\delta_j$, for an infinitesimal sequence $\delta_j\to 0$, but such that $\spt|V_j|\neq M$. In the sequel, we write $F_j$, $f_j$ and $h_j$ to denote the functions $F$, $f$ and $h$ constructed above, for each $V=V_j$.
		
		Recall that, for $x\in M\setminus S$, the cardinality of $\spt|V|\cap\pi^{-1}(x)$ is at most $C(M,\eta)$.
		Integrating \cref{h} and \cref{f.bd}, using the area formula and the lower bound on the density, as well as \cref{S.bound} and \cref{exc.dis}, for the $C(M,\eta)$-Lipschitz functions $h_j$ and $f_j$ we get
		\begin{align*}
			&\int_M(|dh_j|^2+|\nabla^\perp f_j|^2)
			\le C\mathcal{H}^m(S_j)+C\exc(V_j)^2+C\dis(V_j)^2
			\le C\dis(V_j)^2
		\end{align*}
		for some $C=C(M,\eta,\gamma)$. From the bound
		$$|h_j(x)|+|f_j(x)|\le C(M)\max_{y\in\spt|V|\cap\pi^{-1}(x)}|y-x|$$
		on $M\setminus S_j$, we also obtain
		\begin{align*}
			&\int_{M} (h_j^2+|f_j|^2)
			\le C\mathcal{H}^m(S_j)+C\dis(V_j)^2
			\le C\dis(V_j)^2.
		\end{align*}
		
		Let
		\begin{align*}
			&\tilde f_j:=\frac{f_j}{\dis(V_j)},\quad \tilde h_j:=\frac{h_j}{\dis(V_j)}
		\end{align*}
		(note that $\dis(V_j)>0$, since otherwise $\spt|V_j|\subseteq M$ and the last assumption of the theorem gives $\spt|V_j|=M$, a contradiction).
		The sequences $(\tilde f_j)$ and $(\tilde h_j)$ are then bounded in $W^{1,2}(M)$.
		
		Since $\dis(V_j)\le\delta_j\mass(V_j)^{1/2}\to 0$, we have $\mathcal{H}^m(S_j)\to 0$ (by \cref{S.bound} and \cref{exc.dis}).
		Moreover, \(h_j=0\) on \(M\setminus (E_j\cup S_j)\) and \(\mathcal{H}^m(E_j)\to 0\) by assumption. Hence,
		\begin{align*}
			&\mathcal{H}^m(\{h_j\neq 0\})\le\mathcal{H}^m(S_j\cup E_j)\to 0.
		\end{align*}
		Any weak subsequential limit $\tilde h_\infty$ of $\tilde h_j$ is a strong limit in $L^2(M)$, and we deduce that $\tilde h_\infty=0$ (a.e.). Thus, $\tilde h_j\to 0$ in $L^2(M)$.
		
		Assume $\tilde f_j\weakto \tilde f_\infty$ in $W^{1,2}(M)$, up to subsequences. To conclude, we will show that $\tilde f_\infty$ is a nontrivial Jacobi field on $M$, giving the desired contradiction. Since
		\begin{align*}
			&|(y-x)-\exp_x^{-1}(y)|
			\le C(M)|y-x|^2
		\end{align*}
		for $y\in\pi^{-1}(x)$, from \cref{hatf.diam} we get
		\begin{align*}
			&|y-x|
			\le |f_j(x)|+C(M)h_j(x)+C(M)|y-x|^2.
		\end{align*}
		Hence, for all $x\in M\setminus S_j$ and $y\in\spt|V_j|\cap\pi^{-1}(x)$ we obtain
		\begin{align*}
			&|y-x|
			\le C(M)(|f_j(x)|+h_j(x)),
		\end{align*}
		provided that $\rho$ is small enough.
		Since \cref{lip} gives an upper bound on the cardinality of $\spt|V_j|\cap\pi^{-1}(x)$ (while, by the monotonicity formula, we have an upper bound on the density of $V_j$), the area formula and \cref{J.mz} give
		\begin{align*}
			&\dis(V_j)^2
			\le\delta_j^2|V_j|(\pi^{-1}(S_j))+C(M,\eta)\int_{M\setminus S_j}(|f_j|^2+h_j^2).
		\end{align*}
		Using \cref{mass.bound}, for $j$ big enough we conclude that
		\begin{align*}
			&\dis(V_j)^2
			\le C(M,\eta)\int_{M\setminus S_j}(|f_j|^2+h_j^2),
		\end{align*}
		which together with $\tilde h_j\to 0$ (in $L^2(M)$) implies that $\int_M \tilde f_j^2$ is bounded below by a positive constant.
		Thus, $\tilde f_\infty$ is nontrivial.
		
		\textbf{Step 5 (the limit section is a Jacobi field).}
		In this final step we linearize the stationarity of $V_j$.
		For $x\in M\setminus (E_j\cup S_j)$ we let $\theta_j(x)$ be the density of the (unique) point in $\spt|V_j|\cap\pi^{-1}(x)$.
		Assuming without loss of generality that $M$ is connected, note that $V_j\weakto\bar\theta M$ along a subsequence, for a constant $\eta\le\bar\theta\le 1$ (thanks to the constancy theorem).
		
		By applying Allard's strong constancy lemma locally on planes tangent to $M$ (see \cite[Theorem~1]{Al2} and the argument used in \cite[Theorem~2.2]{Al2}), we have
		$$\|\theta_j-\bar\theta\|_{L^1(M\setminus(S_j\cup E_j))}\to 0;$$
		hence, up to enlarging $E_j$ and $\delta_j$, we can assume that $|\theta_j-\bar\theta|\le\delta_j\to 0$ on $M\setminus E_j$, with $S_j\subseteq E_j$ and $\mathcal{H}^m(E_j)\le\delta_j$.
		
		Let $(y^1,\dots,y^n)$ be a local coordinate system defined on a neighborhood $U'\subseteq U$ of a point of $M$, with image $B_2^m\times B_2^{n-m}$, adapted in such a way that $M$ coincides with $B_2^m\times\{0\}$ in this chart, with $\pi(y)=(y^1,\dots,y^m,0,\dots)$, and with the metric $g_{ij}:=g(\de_i,\de_j)$ satisfying $g_{ij}=0$ along $M$, for $i\le m$ and $j\ge m+1$.
		Identifying $T^\perp M$ with $\R^{n-m}$, we also require that $\exp_x(v)=(x,v)$ for all $(x,v)\in B_2^m\times B_2^{n-m}$.
		
		Take a smooth section $\varphi:M\to T^\perp M$ of the normal bundle, fixed in the sequel, and extend it to a vector field on $U$ (still denoted $\varphi$) by parallel transport along geodesic rays orthogonal to $M$.
		
		For a $C(M,\eta)$-Lipschitz function $f:B_1^m\to B_1^{n-m}$ 
		and a bounded multiplicity $\theta:B_1^m\to[0,\infty)$, 
		the varifold $\Gamma$ on $U'$ given by the graph of $f$ (with multiplicity $\theta$) has
		\begin{align*}
			&\int_{\Gr_m(U)}\dive_P\varphi\,d\Gamma(y,P) \\
			&=\int_{B_1^m} \theta(x)\det(g^f(x))^{1/2} (g^f(x))^{ij}
			g_{(x,f(x))}(\nabla_{\de_i+\de_i f(x)}\varphi(x,f(x)),\de_j+\de_j f(x))\,dx,
		\end{align*}
		where we denote by
		\begin{align*}
			&(g^f(x))_{ij}:=g_{(x,f(x))}(\de_i+\de_i f(x),\de_j+\de_j f(x))
		\end{align*}
		the metric of the graph (viewing $f(x)$ as a vector in $\{0\}\times\R^{n-m}$) and we sum over all $i,j=1,\dots,m$. Here $\nabla$ is the Levi-Civita connection on $TN$ for the Riemannian metric $g$.
		
		We observe that $(g^f(x))_{ij}=g_{ij}(x,f(x))+O(|df|^2)+O(|f||df|)$, by our assumptions on the coordinate system, while
		\begin{align*}
			&\nabla_{\de_i+\de_i f(x)}\varphi(x,f(x))
			=\nabla_{\de_i}\varphi(x,f(x))+O(|f||df|),
		\end{align*}
		since on $M$ the vector field $\varphi$ is parallel along all directions of $T^\perp M$.
		
		Also, since $\varphi$ is orthogonal to $\de_j$ on $M$, we have
		\begin{align*}
			&g^{ij}(x,0)g_{(x,0)}(\nabla_{\de_i}\varphi,\de_j)
			=-g^{ij}(x,0)g_{(x,0)}(\varphi,\nabla_{\de_i}\de_j)
			=0
		\end{align*}
		by minimality of $M$.
		Thus, we obtain
		\begin{align*}
			&\int_{\Gr_m(U)}\dive_P\varphi\,d\Gamma(y,P)
			=\int_{B_1^m} \theta(L(f,\varphi)+O(|f|^2+|df|^2)),
		\end{align*}
		with the bilinear operator
		\begin{align*}
			L(f,\varphi)(x)&:=g^{ij}(x,0)g_{(x,0)}(\nabla_{\de_i}\varphi(x,0),\de_j f(x)) \\
			&\quad\,+\sum_{k=m+1}^n \de_{v^k}[\det(g(x,v))^{1/2}g^{ij}(x,v)g_{(x,v)}(\nabla_{\de_i}\varphi(x,v),\de_j)]\Big|_{v=0}\,f^k(x).
		\end{align*}
		
		Note that $|df(x)|\le C(M)(\exc(P)+f(x))$, with $P\subseteq T_{(x,f(x))}N$ the plane spanned by $\{\de_i+\de_i f(x)\}_{i=1}^m$.
		
		Assuming that $\varphi|_M$ is supported on $B_1^m\times\{0\}\subset M\cap U'$, for $V_j$ we have
		\begin{align}\label{stat}
			&\int_{\Gr_m(U)}\dive_P\varphi\,dV_j(y,P)=0
		\end{align}
		by stationarity. We claim that
		\begin{align}\label{jac.claim}
			&\int_{M\cap U'}L(f_j,\varphi)=o(\dis(V_j))
		\end{align}
		as $j\to\infty$. This implies $\int_{M\cap U'}L(\tilde f_j,\varphi)\to 0$ and thus
		\begin{align}\label{jac.concl}
			&\int_{M\cap U'}L(\tilde f_\infty,\varphi)=0.
		\end{align}
		
		In the chosen chart, $f_j|_{B_1^m}$ is eventually a $C(M,\eta)$-Lipschitz function with values into $B_1^{n-m}$. By definition of $f_j$, for any $x\in B_1^m\setminus E_j$ we have that
		$(x,f_j(x))$ is the unique point in $\spt|V_j|\cap\pi^{-1}(x)\subseteq\{x\}\times B_2^{n-m}$.
		
		Recalling \cref{f.bd} and \cref{exc.dis}, the previous computation implies that
		\begin{align}\label{linear.bd}
			&\int_{\Gr_m(U\setminus\pi^{-1}(E_j))}\dive_P\varphi\,dV_j
			=\int_{M\cap U'\setminus E_j}\theta_j L(f_j,\varphi)+O(\dis(V_j)^2).
		\end{align}
		Also, for a plane $P\subseteq T_yN$ we have $\dive_P\varphi=O(d_M(y)+\exc(P))$,
		as the divergence vanishes when $y\in M$ and $P$ is tangent to $M$ (by minimality of $M$).
		Using Cauchy--Schwarz we deduce that
		\begin{align*}
			&\int_{\Gr_m(\pi^{-1}(E_j))}\dive_P\varphi\,dV_j(y,P)
			=O(|V_j|(\pi^{-1}(E_j))^{1/2})O(\dis(V_j)).
		\end{align*}
		By \cref{lip} and \cref{J.mz}, the measure $|V_j|(\pi^{-1}(E_j\setminus S_j))$ is in turn $O(\mathcal{H}^m(E_j))$, which is infinitesimal as $j\to\infty$.
		Finally, by \cref{mass.bound}, $|V|(\pi^{-1}(S_j))=O(\dis(V_j)^2)$. Hence, we obtain
		\begin{align*}
			&\int_{\Gr_m(\pi^{-1}(E_j))}\dive_P\varphi\,dV_j
			=o(\dis(V_j)).
		\end{align*}
		Together with \cref{stat} and \cref{linear.bd}, we arrive at
		\begin{align}\label{jacobi.almost}
			&0=\int_{\Gr_m(U)}\dive_P\varphi\,dV_j(y,P)
			=\int_{M\cap U'\setminus E_j}\theta_j L(f_j,\varphi)+o(\dis(V_j)).
		\end{align}
		
%
		
		Recalling that $|\theta_j-\bar\theta|\le\delta_j$ on $M\setminus E_j$, \cref{jacobi.almost} becomes
		\begin{align*}
			\int_{M\cap U'} \bar\theta L(f_j,\varphi)
			&=O(\mathcal{H}^m(E_j)^{1/2}\|f_j\|_{W^{1,2}})+o(\|f_j\|_{W^{1,2}})+o(\dis(V_j)) \\
			&=o(\dis(V_j)),
		\end{align*}
		where we used again Cauchy--Schwarz.
		This proves our claim \cref{jac.concl}.
		
		This implies that $\tilde f_\infty$ is a Jacobi field: to check this,
		note that
		\begin{align*}
			&\frac{d^2}{ds\,dt}\mathcal{H}^m(M_{s\psi+t\varphi})\Big|_{s,t=0}
			=J(\psi,\varphi)
		\end{align*}
		for any smooth section $\psi:M\to T^\perp M$ (using the notation from \cref{sec.prel}),
		since the left-hand side is symmetric and specializes to $\frac{d^2}{dt^2}\mathcal{H}^m(M_{t\varphi})\Big|_{t=0}$ for $\psi=\varphi$.
		Also, calling $\Phi_t^\varphi$ the flow of $\varphi$ (defined for small times, near $M$), we have
		\begin{align*}
			&\Phi_{-t}^{\varphi}(M_{s\psi+t\varphi})=M_{s\psi+O(st)}
		\end{align*}
		for some smooth remainder $O(st)$, since $\Phi_{-t}^\varphi(M_{t\varphi})=M=M_0$; hence,
		\begin{align*}
			&\frac{d}{dt}\mathcal{H}^m(M_{s\psi+t\varphi})\Big|_{t=0}
			=\int_{M_{s\psi}}\dive_{M_{s\psi}}\varphi
			+\frac{d}{dt}\mathcal{H}^m(M_{s\psi+O(st)})\Big|_{t=0}.
		\end{align*}
		Since $M=M_{0}$ is minimal, the second term is $O(s^2)$. Thus,
		\begin{align*}
			&J(\psi,\varphi)
			=\frac{d}{ds}\int_{M_{s\psi}}\dive_{M_{s\psi}}\varphi\Big|_{s=0}.
		\end{align*}
		With $\varphi$ as above, the same computations used earlier show that
		\begin{align*}
			&\int_{M_{s\psi}}\dive_{M_{s\psi}}\varphi
			=\int_{M\cap U'}L(s\psi,\varphi)+O(s^2);
		\end{align*}
		we deduce that
		\begin{align*}
			&J(\psi,\varphi)
			=\int_{M\cap U'} L(\psi,\varphi)
		\end{align*}
		and, by approximation, we can take $\psi:=\tilde f_\infty$, obtaining $J(\tilde f_\infty,\varphi)=0$.
		Since $\varphi$ was arbitrary, we obtain that $J\tilde f_\infty=0$ in the weak sense.
		By standard elliptic regularity $\tilde f_\infty$ is smooth, and it is then a nontrivial Jacobi field. This is the desired contradiction, since $M$ was assumed to be non-degenerate.
	\end{proof}
	
	\subsection{Proof of \cref{lip}}
	In this technical subsection we show how to obtain \cref{lip}. We first state two useful facts. In the sequel, we let
	\begin{align*}
		&\mathcal{C}_{r,s}:=B_r^m\times B_s^{n-m}
	\end{align*}
	and, for varifolds $V$ on an open subset $\Omega\subseteq\R^n$, we denote
	\begin{align*}
		&\exc(V,\Omega)^2:=\int_{\Gr_m(\Omega)}\mz\|P-\bar P\|^2\,dV(y,P),
	\end{align*}
	where $\bar P$ is the plane spanned by $e_1,\dots,e_m$. Also, given $\Lambda>0$, we fix a metric $d_{\Omega,\Lambda}$ inducing the weak topology on the set $\mathcal{V}_{\Omega,\Lambda}$ of all varifolds in $\Omega$ with mass at most $\Lambda$, and we call $\mathcal{S}_{\Omega,\Lambda}\subseteq\mathcal{V}_{\Omega,\Lambda}$ the subset of stationary $m$-varifolds. We will omit $\Omega$ when it is clear from the context.
	
	\begin{lemmaen}\label{tech1}
	Given $\Lambda,\bar\eta>0$ and $\tau\in(0,1)$, there exists $\delta(n,\Lambda,\bar\eta,\tau)>0$ with the following property. If an $m$-varifold $V$ on $\Omega=\mathcal{C}_{2,2}$ has
	\begin{itemize}
	\item $|V|(B_r^n(y))\ge\omega_m\bar\eta r^m$ for all $y\in\spt|V|$ and $0<r<d(y,\de\Omega)$,
	\item mass $|V|(\mathcal{C}_{2,2})\le\Lambda$,
	\item excess $\exc(V,\mathcal{C}_{2,2})\le\delta$,
	\item $d_\Lambda(V,\mathcal{S}_\Lambda)\le\delta$,
	\item $\spt|V|\cap(S^{m-1}\times\{0\})\neq\emptyset$,
	\end{itemize}
	then $|V|(\mathcal{C}_{1/2,\tau})>0$.
	\end{lemmaen}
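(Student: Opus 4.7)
The plan is to argue by contradiction and compactness. Suppose no such $\delta$ works; then there exists a sequence $V_k$ satisfying the hypotheses with $\delta_k\to 0$ but $|V_k|(\mathcal{C}_{1/2,\tau})=0$. The uniform mass bound lets us extract a weak subsequential limit $V_\infty$. Since stationarity is preserved under weak limits and $d_\Lambda(V_k,\mathcal{S}_\Lambda)\le\delta_k\to 0$, the limit $V_\infty$ is itself stationary. By lower semicontinuity of the excess---which is the integral against $V_k$ of a continuous nonnegative function of $(y,P)$---we get $\exc(V_\infty,\mathcal{C}_{2,2})=0$, so for $V_\infty$-a.e.\ $(y,P)$ one has $P=\bar P$; equivalently, $V_\infty=|V_\infty|\otimes\delta_{\bar P}$.

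Next I would extract the structure of $|V_\infty|$. Testing the stationarity of $V_\infty$ against horizontal vector fields $X=\phi\,e_i$, $1\le i\le m$, yields $\int\partial_i\phi\,d|V_\infty|=0$ for every $\phi\in C^1_c(\Omega)$, so $|V_\infty|$ is translation-invariant in each horizontal direction within $\Omega$. A direct Fubini/Riesz argument then produces a Radon measure $\mu_\infty$ on $B_2^{n-m}$ with
\[
\int\varphi(y')\psi(y'')\,d|V_\infty|(y',y'')=\Big(\int_{B_2^m}\varphi\,d\mathcal{H}^m\Big)\Big(\int\psi\,d\mu_\infty\Big)
\]
for all $\varphi\in C_c(B_2^m)$ and $\psi\in C_c(B_2^{n-m})$. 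In parallel, the hypothesis $\spt|V_k|\cap(S^{m-1}\times\{0\})\ne\emptyset$ yields points $p_k\in\spt|V_k|\cap(S^{m-1}\times\{0\})$ with a subsequential limit $p_\infty=(x_\infty,0)$, $|x_\infty|=1$; the uniform density lower bound $|V_k|(B_r(p_k))\ge\omega_m\bar\eta r^m$ (valid for $r<1$), combined with the inclusion $B_{r-\epsilon}(p_k)\subset\bar B_r(p_\infty)$ for $k$ large and the upper Portmanteau inequality on the compact set $\bar B_r(p_\infty)$, gives $|V_\infty|(\bar B_r(p_\infty))\ge\omega_m\bar\eta r^m>0$. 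Hence $p_\infty\in\spt|V_\infty|$, and the product structure forces $0\in\spt\mu_\infty$.

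Combining these two facts, $\mu_\infty(B_\tau^{n-m})>0$, so
\[
|V_\infty|(\mathcal{C}_{1/2,\tau})=\omega_m(1/2)^m\,\mu_\infty(B_\tau^{n-m})>0.
\]
The lower Portmanteau inequality on the open set $\mathcal{C}_{1/2,\tau}$ then gives $\liminf_k|V_k|(\mathcal{C}_{1/2,\tau})\ge|V_\infty|(\mathcal{C}_{1/2,\tau})>0$, contradicting $|V_k|(\mathcal{C}_{1/2,\tau})=0$. The only delicate point is verifying the product structure of $|V_\infty|$ from stationarity plus vanishing excess; the remaining ingredients---weak compactness of bounded-mass varifolds, closedness of the stationary class under weak limits, lower semicontinuity of the excess, and Portmanteau---are standard.
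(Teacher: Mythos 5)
Your proof is correct and follows the same compactness-and-contradiction skeleton that the paper outlines: extract a weak subsequential limit $V_\infty$, use closedness of $\mathcal{S}_\Lambda$ and lower semicontinuity of the excess to see that $V_\infty$ is stationary with $\exc(V_\infty,\mathcal{C}_{2,2})=0$, identify the structure of such a varifold, propagate the density lower bound to conclude $p_\infty\in\spt|V_\infty|$, and finish via the lower Portmanteau inequality on the open cylinder $\mathcal{C}_{1/2,\tau}$. The one place where you diverge from the paper is in extracting the structure of the zero-excess stationary limit. You test stationarity against horizontal constant fields $\phi\,e_i$ ($i\le m$), use $\dive_{\bar P}(\phi\,e_i)=\partial_i\phi$ to get $\partial_i|V_\infty|=0$ distributionally, and disintegrate to obtain $|V_\infty|=(\mathcal{H}^m\mrestr B_2^m)\otimes\mu_\infty$. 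The paper instead observes that multiplication by any function $\varphi(y'')$ of the last $n-m$ coordinates preserves stationarity (again because $\nabla_{\bar P}\varphi=0$), slices onto $\{y''=t\}$ by approximation, and invokes the constancy theorem together with the density lower bound to describe $V_\infty$ as a locally finite union of parallel $m$-planes with constant multiplicity at least $\bar\eta$. Both routes are standard consequences of the same observation; yours is a touch more economical for this particular lemma, since the conclusion only requires $\mu_\infty(B_\tau^{n-m})>0$, which you get directly from $0\in\spt\mu_\infty$ without needing the vertical measure to be atomic, whereas the paper's sharper ``union of planes'' description is what it reuses in \cref{tech2}.
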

	
	\begin{lemmaen}\label{tech2}
	Given $\bar R\ge 1$ and $\Lambda,\bar\eta,\tau>0$, there exists $\delta(n,\bar R,\Lambda,\bar\eta,\tau)>0$ with the following property. Given $1\le S\le R\le\bar R$ and an $m$-varifold $V$ on $\Omega=B_{4R}^n$ having
	\begin{itemize}
	\item $|V|(B_r^n(y))\ge\omega_m\bar\eta r^m$ for all $y\in\spt|V|$ and $0<r<d(y,\de\Omega)$,
	\item mass $|V|(B_{4R}^n)\le\Lambda$,
	\item excess $\exc(V,B_{4R}^n)\le\delta$,
	\item $d_\Lambda(V,\mathcal{S}_\Lambda)\le\delta$,
	\item $0\in\spt|V|$,
	\end{itemize}
	we have $|V|(\mathcal{C}_{R,3S})>(1-\tau)R^m|V|(\bar{\mathcal{C}}_{1,2S})$.
	\end{lemmaen}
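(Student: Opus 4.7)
I would prove \cref{tech2} by a standard compactness-and-contradiction argument, reducing to a classification of the limit stationary varifold. Suppose the conclusion fails: then there exist $R_j, S_j \in [1, \bar R]$ with $S_j \le R_j$, $\delta_j \to 0$, and $m$-varifolds $V_j$ on $B_{4R_j}^n$ satisfying the five hypotheses with $\delta = \delta_j$, yet with $|V_j|(\mathcal{C}_{R_j,3S_j}) \le (1-\tau)R_j^m |V_j|(\bar{\mathcal{C}}_{1,2S_j})$. Up to subsequences, arrange $R_j \to R_\infty$ and $S_j \to S_\infty$ in $[1,\bar R]$, and extract a weak subsequential limit $V_j \weakto V_\infty$ on $B_{4R_\infty}^n$ using weak compactness of varifolds of uniformly bounded mass.

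The next step is to read off enough structure from $V_\infty$ to finish. Stationarity passes to the limit since $\mathcal{S}_\Lambda$ is weakly closed and $d_\Lambda(V_j,\mathcal{S}_\Lambda) \to 0$. Lower semicontinuity of the excess (an integral over $\Gr_m$ of a continuous function) gives $\exc(V_\infty, B_{4R_\infty}^n) = 0$, so the approximate tangent plane of $V_\infty$ equals $\bar P$ at $|V_\infty|$-a.e.\ point. The density lower bound $|V_\infty|(\bar B_r^n(y)) \ge \omega_m \bar\eta r^m$ for $y \in \spt|V_\infty|$ passes to the limit in the usual way using upper semicontinuity on closed balls; in particular, $0 \in \spt|V_\infty|$ and $V_\infty$ is rectifiable by Allard's rectifiability theorem. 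Combining rectifiability with a.e.\ horizontality of the tangent plane, $\spt|V_\infty|$ is contained (up to $\mathcal{H}^m$-null sets) in a countable union of horizontal affine $m$-planes $\bar P + h_k$, $h_k \in \R^{n-m}$; stationarity together with the constancy theorem applied on each such plane forces the multiplicity to be a constant $\theta_k \ge \bar\eta$. Hence
\begin{equation*}
V_\infty = \sum_k \theta_k\, \mathcal{H}^m \mrestr ((\bar P + h_k) \cap B_{4R_\infty}^n).
\end{equation*}

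From this representation, $|V_\infty|(\mathcal{C}_{R_\infty,3S_\infty}) = \omega_m R_\infty^m \sum_{|h_k|<3S_\infty}\theta_k$ and $|V_\infty|(\bar{\mathcal{C}}_{1,2S_\infty}) = \omega_m \sum_{|h_k|\le 2S_\infty}\theta_k$, and the inclusion $\{|h_k|\le 2S_\infty\} \subseteq \{|h_k| < 3S_\infty\}$ gives
\begin{equation*}
|V_\infty|(\mathcal{C}_{R_\infty,3S_\infty}) \ge R_\infty^m\, |V_\infty|(\bar{\mathcal{C}}_{1,2S_\infty}).
\end{equation*}
On the other hand, using lower semicontinuity on the open cylinders $\mathcal{C}_{R_\infty-\epsilon,3S_\infty-\epsilon} \subset \mathcal{C}_{R_j,3S_j}$ (eventually) and upper semicontinuity on the closed cylinders $\bar{\mathcal{C}}_{1,2S_j} \subset \bar{\mathcal{C}}_{1,2S_\infty+\epsilon}$ (eventually), then letting $\epsilon \to 0$ along continuity values of $|V_\infty|$, the negated conclusion for $V_j$ passes to the limit as $|V_\infty|(\mathcal{C}_{R_\infty,3S_\infty}) \le (1-\tau)R_\infty^m |V_\infty|(\bar{\mathcal{C}}_{1,2S_\infty})$. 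Combined with the previous inequality, this forces $|V_\infty|(\bar{\mathcal{C}}_{1,2S_\infty}) = 0$, contradicting the density lower bound at $0$: since $S_\infty \ge 1$, we have $\bar B_1^n(0) \subseteq \bar{\mathcal{C}}_{1,2S_\infty}$ and hence $|V_\infty|(\bar{\mathcal{C}}_{1,2S_\infty}) \ge \omega_m \bar\eta > 0$.

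The main obstacle I anticipate is the structural classification of $V_\infty$ as a locally finite sum of constant-multiplicity horizontal $m$-planes. Nothing exotic is invoked beyond Allard's rectifiability and the constancy theorem, but the two have to be assembled carefully: zero excess together with rectifiability localizes the support to countably many horizontal affine planes, and stationarity tested against horizontal vector fields supported in a thin slab around each such plane reduces to the classical constancy theorem, giving a constant density $\theta_k \ge \bar\eta$ on each plane. Once this structure is in hand, the remainder is a short bookkeeping with semicontinuity of varifold mass on open and closed sets.
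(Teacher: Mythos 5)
Your proposal follows essentially the same compactness-and-contradiction scheme as the paper, and the bookkeeping with semicontinuity on open and closed cylinders in the last step is correct. The one place that needs more care is the structure step: your claim that ``rectifiability together with a.e.\ horizontality of the tangent plane'' already forces $\spt|V_\infty|$ into countably many horizontal affine $m$-planes is not quite right without invoking more. A rectifiable set can have a.e.\ horizontal approximate tangent without lying on countably many planes (e.g., the graph over a fat Cantor set of a $C^1$ function whose derivative vanishes on that set); what rules this out here is stationarity together with the density lower bound, not rectifiability per se. The paper avoids this issue entirely by a cleaner route: for any $\varphi:\R^n\to\R$ depending only on the last $n-m$ coordinates, and any $X\in C^\infty_c$, zero excess gives $\delta(\varphi V_\infty)[X]=\delta V_\infty[\varphi X]=0$, so $\varphi V_\infty$ is stationary; letting $\varphi$ shrink to $\uno_{\R^m\times\{t\}}$ shows the slice $\uno_{\R^m\times\{t\}}V_\infty$ is stationary, and the monotonicity formula applied to $\varphi V_\infty$ (with $\varphi(y)=1$) transfers the density lower bound $\bar\eta$ to each slice. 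The constancy theorem then makes each nonempty slice a constant positive multiple of the full plane, and the global mass bound caps the number of slices. This is a direct way to get the same decomposition you write, and it does not need Allard's rectifiability theorem as an input. Once you either adopt this or supply the missing justification for your rectifiability-plus-horizontality claim, your proof is complete and matches the paper's.
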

	
	The proof is in both cases an easy argument by compactness and contradiction, based on the fact that a stationary $m$-varifold $V'$ on a convex open set $\Omega\subseteq\R^n$ with $\exc(V',\Omega)=0$ and density at least $\bar\eta$ is just a locally finite union of planes (intersected with $\Omega$), each with a constant density. To check this, note that for any function $\varphi:\R^n\to\R$ depending only on the last $n-m$ coordinates we have
	\begin{align*}
		&\delta (\varphi V')[X]=\delta V'[\varphi X]=0,
	\end{align*}
	for any smooth vector field with compact support in $\Omega$. Also,
	\begin{align*}
		&|\varphi V'|(B_r^n(y))\ge\omega_m\Theta^m(|V'|,y)r^m\ge\omega_m\bar\eta r^m
	\end{align*}
	if $\varphi(y)=1$ (by the monotonicity formula). Hence, if $y=(x,t)\in\spt|V'|$, then by approximation we get that $\uno_{\R^m\times\{t\}}V'$ is stationary and has $y$ in its support.
	By the constancy theorem, this varifold equals a constant positive multiple of the plane $\R^m\times\{t\}$, with density at least $\bar\eta$.
	
	
	We will apply these lemmas to suitable rescalings $\bar V_{a,r}$ of a given varifold $\bar V$ in $\R^n$, where
	\begin{align*}
		&\bar V_{a,r}:=(T_{a,r})_*\bar V,\quad T_{a,r}(y):=\frac{y-a}{r}.
	\end{align*}
	
	With these lemmas in hand, the proof of \cref{lip} becomes quite straightforward.
	
	\begin{proof}[Proof of \cref{lip}]
		Given a point $x_0\in M$, we fix a chart $\phi:U'\to \mathcal{C}_{r_0,r_0}$ centered at $x_0$ (with $U'\subseteq U$), in such a way that $\phi(U'\cap M)=B_{r_0}^m\times\{0\}$ and
		\begin{align*}
			&\phi^{-1}(x,t)=\exp_{\phi^{-1}(x,0)}(t^ie_i(x))
		\end{align*}
		(for all $x\in B_{r_0}^m$ and $t\in B_{r_0}^{n-m}$, with an implicit sum over $i=1,\dots,n-m$),
		where $(e_i(x))_{i=1}^{n-m}$ is an orthonormal basis of $T_{\phi^{-1}(x,0)}^\perp M$. We can also require that $g_{ij}(0)=\delta_{ij}$, $c(M)\delta\le g\le C(M)\delta$, and $\|g\|_{C^2}\le C(M)$.
		
		We denote by $\bar\pi:\R^n\to\R^m\times\{0\}=\R^m$ the orthogonal projection (which corresponds to $\pi:U\to M$ in the coordinate chart) and let $\bar V$ be the image under $\phi$ of the varifold $V$ (restricted to $U'$), so that $\bar V$ is a rectifiable $m$-varifold in $\mathcal{C}_{r_0,r_0}$. The monotonicity formula for $V$ gives
		\begin{align}\label{ball.bds}
			&\omega_m\bar\eta r^m\le|\bar V|(B_{r}^n(y))\le C(M) r^m
		\end{align}
		for all balls $B_r^n(y)\subseteq\mathcal{C}_{r_0,r_0}$ with center $y\in\spt|\bar V|$, for some $\bar\eta=c(M)\eta$.
		Also, choosing $r_0$ small enough (depending on a given $0<\tau<1$), we can guarantee that
		\begin{align}\label{mono.tau}
			&|\bar V|(B_r^n(y))\ge(1-\tau)\Big(\frac{r}{s}\Big)^m|\bar V|(B_s^n(y))
		\end{align}
		for all balls $B_r^n(y)\subseteq\mathcal{C}_{r_0,r_0}$ and $0<s<r$.
		
		Note that, for $\rho<r_0$, from the assumption $\spt|V|\subseteq U=B_\rho(M)$ we deduce
		\begin{align*}
			&\spt|\bar V|\subseteq B_{r_0}^m\times B_{\rho}^{n-m}.
		\end{align*}
		In the sequel, we will assume that $\rho$ is much smaller than $r_0$.
		
		We now show the first conclusion. Assume that $x_0\nin S$, fix $x\in B_\rho^m\setminus\{0\}$ and take any $y=(x,t)\in\spt|\bar V|\cap\bar\pi^{-1}(x)$.
		
		Provided that $\rho$ and $\gamma$ are small enough, we can apply \cref{tech1} (for a fixed $\tau$) to the varifold
		$\bar V_{(0,t),|x|}$. Note that the upper bound on its mass, with $\Lambda=\Lambda(M)$, is guaranteed by the monotonicity formula for $V$,
		while the smallness of the excess comes from $x_0\nin S$, and the last condition 
		follows from the fact that $(x,t)\in\spt|\bar V|$.
		
		Also, we claim that $\bar V_{a,r}$ is $\delta$-close to the set $\mathcal{S}_\Lambda$ of stationary varifolds on $\mathcal{C}_{2,2}$ (for all $a\in\mathcal{C}_{r_0/2,r_0/2}$ and all $0<r<r_0/4$), provided that $r_0$ is small enough (depending on $\delta$): indeed, for any vector field $X\in C^\infty_c(\mathcal{C}_{2,2},\R^n)$ and any plane $P\subseteq T_y\R^n$, our assumptions on $(g_{ij})$ give
		\begin{align*}
			&\lvert\dive_P^{g'} X-\dive_P^\delta X\rvert
			\le C(M)r_0\|X\|_{C^1},
		\end{align*}
		where $\dive_P^\delta$ denotes the flat divergence along $P$, while $\dive_P^{g'}$ is the one with respect to the rescaled metric $g':=r^{-2}(T_{a,r}^{-1})^*g$. Thus, the claim follows from the stationarity of $\bar V_{a,r}$ with respect to $g'$ and a standard compactness argument.
		
		From \cref{tech1} (applied to $\bar V_{(0,t),|x|}$) we deduce that there exists
		\begin{align*}
			&y_1=(x_1,t_1)\in\spt|\bar V|\quad\text{with }|x_1|\le\mz|x|,\ |t_1-t|\le\tau|x|.
		\end{align*}
		Iterating, we get a sequence $y_k=(x_k,t_k)$ such that $|x_{k+1}|\le\mz|x_k|$ and $|t_{k+1}-t_k|\le\tau|x_k|$.
		In the limit we get a point $(0,t_\infty)\in\spt|\bar V|$ such that $|t_\infty-t|\le 2\tau|x|$.
		
		This implies that \(F(x)\) is contained in a \((2\tau+C(M)\rho)|x|\)-neighborhood of \(F(0)\) (here the term \(O(\rho)\) comes from the change of coordinates); by symmetry  it easily follows that the map $F$ is locally $(2\tau+C(M)\rho)$-Lipschitz on $M\setminus S$. Hence, $x\mapsto F(x)$ is locally $1$-Lipschitz once we take $\tau$ and $\rho$ (as well as $\gamma$) to be small enough.
		
		We now turn to the second conclusion. Let $\Lambda=\Lambda(M)$ be such that $|\bar V|(B_r^n(y))\le\Lambda r^n$ for all $y\in\mathcal{C}_{r_0,r_0}$ and all $r>0$.
		We will show that we cannot have a set $A:=\{t_1,\dots,t_K\}\subset B_\rho^{n-m}$ of distinct values such that $(0,t_k)\in\spt|\bar V|\setminus\tilde S$ for all $k=1,\dots,K$, with $K$ the smallest integer greater or equal than $\frac{\Lambda}{\omega_m\bar\eta}+1$.
		
		Suppose by contradiction that this holds. Fix $\tau>0$ small and let
		\begin{align*}
			&r:=\mz\min_{k\neq k'}|t_k-t_{k'}|\le \rho.
		\end{align*}
		By \cref{ball.bds} we have
		\begin{align*}
			&|\bar V|((0,t_k)+\mathcal{C}_{r,r})
			\ge|\bar V|(B_r^n(0,t_k))
			\ge\omega_m\bar\eta r^m
			\ge\tau r^m
		\end{align*}
		for all $k=1,\dots,K$. We start from $S:=1$ and we keep replacing $S$ with $6S$ until
		\begin{align*}
			&B_{6Sr}^{n-m}(t_k)\cap A=B_{Sr}^{n-m}(t_k)\cap A
		\end{align*}
		for all $k$. Observe that $S\le C(K)=C(M,\eta)$.
		
		Now we take a maximal collection of balls $B_{3Sr}^{n-m}(t_k)$, indexed by a subset $E\subseteq\{1,\dots,K\}$. Note that $A\subseteq\bigcup_{k\in E} B_{Sr}^{n-m}(t_k)$, since for any index $\ell$ the ball $B_{3Sr}^{n-m}(t_{\ell})$ intersects $B_{3Sr}^{n-m}(t_k)$ for some $k\in E$, so $t_{\ell}$ belongs to $B_{6Sr}^{n-m}(t_k)$, and hence to $B_{Sr}^{n-m}(t_k)$ by our choice of $S$. It follows that
		\begin{align*}
			&\sum_{k\in E}|\bar V|((0,t_k)+\mathcal{C}_{r,2Sr})
			\ge\sum_{\ell}|\bar V|((0,t_\ell)+\mathcal{C}_{r,r})
			\ge K\omega_m\bar\eta r^m.
		\end{align*}
		
		Fix now $3S\le R\le C(M,\eta,\tau)$ so big that
		\begin{align}\label{def.R}
		&R^m
		\ge(1-\tau)(R^2+9S^2)^{m/2}.
		\end{align}
		We can apply \cref{tech2} to $\bar V_{(0,t_k),r}$ and find
		\begin{align}\label{gain}
		\begin{aligned}
			|\bar V|((0,t_k)+\mathcal{C}_{Rr,3Sr})
			&\ge(1-\tau)R^m|\bar V|((0,t_k)+\mathcal{C}_{r,2Sr}) \\
			&\ge(1-\tau)R^m\sum_{\ell\in B_{Sr}^{n-m}(t_k)\cap A}|\bar V|((0,t_\ell)+\mathcal{C}_{r,r}) \\
			&\ge(1-\tau)|B_{Sr}^{n-m}(t_k)\cap A|\omega_m\bar\eta(Rr)^m
		\end{aligned}
		\end{align}
		for any $k\in E$
		(provided that $\rho$ and $\gamma$ are small enough).
		
		If $E=\{k\}$ is a singleton, then we are done since
		\begin{align*}
			\Lambda (Rr)^m
			&\ge (1-\tau)\Lambda((R^2+9S^2)^{1/2}r)^m \\
			&\ge (1-\tau)|\bar V|(B_{(R^2+9S^2)^{1/2}r}^n(0,t_k)) \\
			&\ge (1-\tau)|\bar V|((0,t_k)+\mathcal{C}_{Rr,3Sr}) \\
			&\ge (1-\tau)^2 K\omega_m\bar\eta(Rr)^m.
		\end{align*}
		This contradicts the fact that $K\ge\frac{\Lambda}{\omega_m\bar\eta}+1$, once we choose $\tau$ small enough.
		
		Otherwise, we iterate the procedure. Let $r_1:=r$, $S_1:=S$, $R_1:=R$, $E_1:=E$, and define
		\begin{align*}
			&r_2:=\mz\min_{k,k'\in E_1,\,k\neq k'}|t_k-t_{k'}|.
		\end{align*}
		Note that $r_2\ge 3S_1r_1$, by definition of $S$. We claim that
		$$ |\bar V|((0,t_k)+\mathcal{C}_{r_2,r_2}) \ge (1-\tau)^3\Big(\frac{r_2}{r_1}\Big)^m|\bar V|((0,t_k)+\mathcal{C}_{r_1,2S_1r_1}) $$
		for all $k\in E$. Indeed, if $(R_1^2+9S_1^2)^{1/2}r_1\ge r_2$, then ${r_2}/{r_1}\le C(M,\eta,\tau)$ and we can apply \cref{tech2} to $\bar V_{(0,t_k),r_1}$ (with $r_2/r_1$ in place of $R$) in order to get
		\begin{align*}
			|\bar V|((0,t_k)+\mathcal{C}_{r_2,r_2})
			&\ge |\bar V|((0,t_k)+\mathcal{C}_{r_2,3S_1r_1}) \\
			&\ge (1-\tau)\Big(\frac{r_2}{r_1}\Big)^m|\bar V|((0,t_k)+\mathcal{C}_{r_1,2S_1r_1}).
		\end{align*}
		If instead $r_2\ge(R_1^2+9S_1^2)^{1/2}r_1$, we apply \cref{mono.tau} to obtain
		\begin{align*}
			|\bar V|((0,t_k)+\mathcal{C}_{r_2,r_2})
			&\ge |\bar V|(B_{r_2}^n(0,t_k)) \\
			&\ge (1-\tau)\Big(\frac{r_2}{(R_1^2+9S_1^2)^{1/2}r_1}\Big)^m|\bar V|(B_{(R_1^2+9S_1^2)^{1/2}r_1}^n(0,t_k)) \\
			&\ge (1-\tau)^2\Big(\frac{r_2}{R_1r_1}\Big)^m|\bar V|((0,t_k)+\mathcal{C}_{R_1r_1,3S_1r_1}) \\
			&\ge (1-\tau)^3\Big(\frac{r_2}{r_1}\Big)^m|\bar V|((0,t_k)+\mathcal{C}_{r_1,2S_1r_1}),
		\end{align*}
		where we also used \cref{def.R} and \cref{gain}.
		In both cases, the claim follows. We then obtain
		\begin{align*}
			&|\bar V|((0,t_k)+\mathcal{C}_{r_2,r_2})
			\ge (1-\tau)^3|B_{S_1r_1}^{n-m}(t_k)\cap A|\omega_m\bar\eta r_2^m.
		\end{align*}
		
		Let $1\le S_2\le C(M,\eta)$ be such that
		\begin{align*}
			&B_{6S_2r_2}^{n-m}(t_k)\cap A=B_{S_2r_2}^{n-m}(t_k)\cap A
		\end{align*}
		for all $k\in E_1$.
		We select $E_2\subseteq E_1$ in the same way as $E_1$, and again we have
		\begin{align*}
			&\sum_{k\in E_2}|\bar V|((0,t_k)+\mathcal{C}_{r_2,2S_2r_2})
			\ge (1-\tau)^3 K\omega_m\bar\eta r_2^m.
		\end{align*}
		If $E_2$ is a singleton, then we reach a contradiction as before.
		Otherwise, we keep iterating. Note that $|E_1|>|E_2|>\cdots$, since at every step at least two clusters of points $(0,t_k)$ merge together; hence, at most $K$ iterations are required. This completes the proof.
	\end{proof}

\frenchspacing

\end{document}